\newtheorem{theorem}{Theorem}[section]
\newtheorem{lemma}[theorem]{Lemma}
\newtheorem{co}[theorem]{Corollary}
\newtheorem{prop}[theorem]{Proposition}
\theoremstyle{definition}
\newtheorem{ex}[theorem]{Example}
\theoremstyle{remark}
\newtheorem{remark}[theorem]{Remark}
\numberwithin{equation}{section}
\begin{document}
\title[Unique continuation for Schrödinger equations]{Solving the unique continuation problem for Schrödinger equations with low regularity solutions using a stabilized finite element method}

\author{Erik Burman}
\address{Department of Mathematics, University College London, UK}

\author{Mingfei Lu}
\address{Department of Mathematics, University College London, UK}

\author{Lauri Oksanen}
\address{Department of Mathematics and Statistics, University of Helsinki, Finland}

\begin{abstract} 
In this paper, we consider the unique continuation problem for the Schrödinger equations. We prove a Hölder type conditional stability estimate and build up a parameterized stabilized finite element scheme adaptive to the \textit{a priori} knowledge of the solution, achieving error estimates in interior domains with convergence up to continuous stability. The approximability of the scheme to solutions with regularity of as low as $H^1$ is studied and the convergence rates for discrete solutions under $L^2$ and $H^1$ norms are shown. Comparisons in terms of different parameterization for different regularities will be illustrated with respect to the convergence and condition numbers of the linear systems. Finally, numerical experiments will be given to illustrate the theory.

\end{abstract}
\subjclass[2020]{65N12, 65N20}
\keywords{Unique continuation, Schrödinger equations, finite element, low regularity}
\maketitle
\section{Introduction}
We will consider the \textbf{Unique Continuation Problem} for Schrödinger equations
\begin{equation*}
    -\Delta u + Pu = f,
\end{equation*}
and develop a stabilized finite element method to solve it numerically. Similar work for Poisson's equation can be found in \cites{MR3134434,burman2018solving}, and for Helmholtz equation in \cite{burman2019unique}. In all these works no regularization of the continuous problem is introduced, instead numerical stability and the stability of the continuous problem are handled separately. For numerical stability, a primal-dual framework is considered, with stabilization of certain residuals. To quantify the stability of the unique continuation problems on the other hand conditional stability estimates are applied. The design criteria of the stabilization terms is then to obtain the best convergence order possible, while minimizing the effect of perturbations in data.  These idea were first introduced in \cite{Bur14CR}. A related earlier approach was the quasi-reversibility method on mixed form introduced in \cites{Bour05} where Tikhonov regularization on the continuous level was introduced to ensure a well-posed problem. In particular in \cites{Bour05} the convergence for vanishing regularization was proven assuming only that the solution is in $H^1(\Omega)$ for unperturbed data. The difficulty with this approach under discretization is that it appears to be impossible to match the rate of the vanishing regularization keeping optimal convergence of the approximation method. The analysis of \cites{MR3134434,Bur14CR, burman2018solving, burman2019unique} typically requires stronger regularity assumptions in order to show that the numerical method has a convergence order. The objective of the present work is to show how to design the stabilized methods so that low regularity can be handled in a similar fashion, reflecting the conditional stability and the approximation of the finite element space in the regularity class of the exact solution. Observe that the design of the method requires a priori knowledge of this regularity to obtain the best convergence rate. Similar estimates combining conditional stability and approximation have recently been obtained in \cites{DMS23} in the framework of dual norm least squares methods.

The mathematical formulation of the unique continuation problem for Schrödinger equations is as follows: 
Let $\omega\subset \Omega\subset \mathbb{R}^n$ be domains. Let $P\in L^\infty(\Omega)$ be the potential. Find $u\in H^1(\Omega)$, such that 
\begin{equation}\label{pbl}
    \left\{\begin{array}{rcc}
         -\Delta u+Pu& =f & \texttt{in $\Omega$} \\
         u&  =q & \texttt{in $\omega$}
    \end{array}.
    \right.
\end{equation}
It is shown in Chapter \ref{sec:continuous} that if a solution exists, then it is unique. However, since such unique continuation problem are known to be ill-posed, in terms of continuity, we can at most expect a conditional stability estimate that reads:
\begin{equation*}
    ||u||_{H^1(B)}\le C(P)(||-\Delta u+Pu||_{H^{-1}(\Omega)}+||u||_{L^2(\omega)})^\kappa (||-\Delta u+Pu||_{H^{-1}(\Omega)}+||u||_{H^1(\Omega)})^{1-\kappa},
\end{equation*}
and 
\begin{equation*}
    ||u||_{L^2(B)}\le C(P)(||-\Delta u+Pu||_{H^{-2}(\Omega)}+||u||_{L^2(\omega)})^\kappa (||-\Delta u+Pu||_{H^{-2}(\Omega)}+||u||_{L^2(\Omega)})^{1-\kappa},
\end{equation*}
for $B \setminus \omega \subset\subset\Omega$ and $\kappa\in(0,1)$. When $B\setminus \omega$ touches the boundary, the best estimate achieved is logarithmic, see \cite{john1960continuous}. In this paper, we will focus on the former case, i.e. the area of interest $\overline{B\setminus \omega}\subset \Omega$.\\

Our aim is to build a stabilized finite element method to solve (\ref{pbl}) computationally. With the minimal a priori knowledge $u\in H^1(\Omega)$ we show that the scheme converges with a rate matching the conditional stability in $L^2$ norm, but only weakly in $H^1$ norm without a convergence rate. Such low regularity can result from three different reasons:\\

1. low regularity of the right-hand side $f$;\\

2. low regularity of the geometry $\Omega$ on segments of the boundary that are actual limits of the computational domain in physical space, but with unspecified boundary data;\\

3. low regularity of such unknown boundary data.\\

Note that geometric singularities appear only if $\partial \Omega$ coincides with a physical boundary of the domain with unknown boundary conditions and beyond which the solution can not be smoothly extended. A corner on a part of the domain that is an artificial truncation of the domain does not result in a loss of regularity of the solution. Let us first assume that the unknown boundary data are smooth and that $\Omega$
is a convex polygonal. If only the right-hand side has low regularity, $f \in H^{-1}
(\Omega)$ we can decompose the solution into a singular part $u_s \in H^1_0(\Omega)$, solution to
\begin{equation}\label{u_s}
    -\Delta u_s + Pu_s = f  \texttt{ in $\Omega$},
\end{equation}
and $u_r \in H^r(\Omega)$ with $r\ge 2$ such that $u_r = q - u_s$ in $\omega$, and 
\begin{equation}
    -\Delta u_r +Pu_r= 0  \texttt{ in $\Omega$},
\end{equation}
Clearly in this situation we can solve (\ref{u_s}) using the standard finite element method with piece-wise affine elements, leading to the error estimate
\begin{equation}\label{boundus}
    ||u_s-u_{h,s}||_{L^2(\Omega)} \le Ch||f||_{H^{-1}(\Omega)}.
\end{equation}
Since $u_r$ is smooth we can apply the primal-dual stabilised method (see Chapter 3), with elements of polynomial degree less than or equal to $p$, to obtain the optimal approximation $u_{h,r}$ satisfying
\begin{equation*}
    ||u_r - u_{h,r}||_{L^2(B)} \le C(P) h^{\kappa (t+1)}||u_r||_{H^{t+1}(\Omega)},
\end{equation*}
where $t = \min\{p, r-1\}$, $B\subset\subset \Omega$ and $\kappa \in (0,1)$ a coefficient depending on the distance from the boundary of $B$ to the boundary of $\Omega$. As this distance goes to zero, so does $\kappa$.

It follows that in the special case where the only cause of poor regularity is the right-hand side $f$, we can still obtain an error estimate for $u_h = u_{h,s} + u_{h,r}$,
\begin{equation*}
    ||u-u_h||_{L^2(B)} \lesssim h||f||_{H^{-1}(\Omega)} + h^{\kappa (t+1)}||u_r||_{H^{t+1}(\Omega)} .
\end{equation*}

Hence for unperturbed data it is possible to achieve up to first order convergence in spite of the singular right-hand side. Of course these arguments do not work for the second and third points above. If the boundary of $\Omega$ has a physical geometric singularity then the estimate (\ref{boundus}) fails due to lack of stability of the adjoint equation and if some boundary data has insufficient smoothness then $u_r\in H^r(\Omega)$ cannot hold with $r \ge 2$. The objective of the present work is to propose an error analysis that is valid for all sources of low regularity. We will follow an optimization approach by introducing a stabilized Lagrange functional $L_h(u_h,z_h)$ at the discrete level, namely,
\begin{equation*}
\begin{aligned}
    L_h(u_h,z_h) := &\frac{1}{2}||u_h-q||^2_\omega +\textit{Primal stabilizer}-\textit{Dual stabilizer}\\
    &+a(u_h,z_h)-<f,z_h>_\Omega.
    \end{aligned}
\end{equation*}
The stabilization method combines tools known from finite element methods to ensure a discrete inf-sup condition and  Tikhonov type regularization, see, for example \cite[Chapter~5]{engl1996regularization}. Similar type of approaches have been used in \cites{burman2019unique,burman2020stabilized}. The main differences between this paper and these references are that we build up a parameterized approach adaptive to the prior knowledge of the smoothness of the exact solution $u$. The method parameters are chosen differently to show the convergence for $u\in H^1(\Omega)$ or $u\in H^s(\Omega)$, with $s > 1$. The main results are that for $u\in H^1(\Omega)$, the approximations of our scheme converges weakly under the $H^1$-norm, with error estimates on residual quantities,\\
\begin{equation*}
    ||u_h-q||_{L^2(\omega)}\le C(P)h^{\alpha}||u||_{H^1(\Omega)},
\end{equation*}
and
\begin{equation*}
    ||(-\Delta+P)u_h-f||_{H^{-2}(\Omega)}\le C(P)(h+h^{\frac{\tau}{2}})||u||_{H^1(\Omega)},
\end{equation*}
where $0<\alpha\le 1, 0<\tau\le 2$ are parameters we will use in the scheme. Moreover, we will use the error estimate of the residual norm to show convergence in the $L^2$-norm, namely,
\begin{equation*}
    ||u_h-u||_{L^2(B)} \le C(P)h^\kappa||u||_{H^1(\Omega)}.
\end{equation*}
Moreover, for $u\in H^s(\Omega)$ with $s>1$ ($s$ may be fractional), we will show that our scheme has a convergence matching the approximation order of the finite element space and the conditional stability estimate, namely,
\begin{equation*}
    ||u-u_h||_{H^1(B)}\le C(P)h^{\kappa (s-1)}||u||_{H^s(\Omega)}.
\end{equation*}

\section{Continuous stability estimate}
\label{sec:continuous}
Our stabilized finite element method relies on several conditional stability estimates at the continuous level. The layout of necessary estimates follows \cite{burman2019unique} for Helmholtz equations. We will state similar estimates for the Schrödinger equations. These estimates are based on the following well known Carleman estimate:
\begin{lemma}[Carleman estimate]\thlabel{car}
    let $\rho\in C^3(\Omega)$ and $K\subset\Omega$ be a compact subset and contains no critical point of $\rho$. Let $\alpha>0$ and $\phi=e^{\alpha\rho}$. Then for some large $\alpha$ there exists $\tau_0\ge 1$ and $C>0$ depending on $\alpha$ and $\rho$, such that for all $\tau\ge \tau_0$ and $w\in C^2_0(K)$,
    \begin{equation*}
        \int_Ke^{2\tau\phi}(\tau|\nabla w|^2+\tau^3|w|^2)dx\le C\int_Ke^{2\tau\phi}|\Delta w|^2dx.
    \end{equation*}
\end{lemma}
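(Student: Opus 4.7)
I would prove this by the classical conjugation-plus-commutator (Hörmander) argument, exploiting the convexification $\phi = e^{\alpha\rho}$. First, substituting $v := e^{\tau\phi}w\in C_0^2(K)$ conjugates the Laplacian into
\begin{equation*}
P_\tau v \;:=\; e^{\tau\phi}\Delta(e^{-\tau\phi}v) \;=\; \Delta v - 2\tau\,\nabla\phi\cdot\nabla v + (\tau^2|\nabla\phi|^2 - \tau\Delta\phi)\,v,
\end{equation*}
so that $\int_K e^{2\tau\phi}|\Delta w|^2\,dx = \|P_\tau v\|_{L^2(K)}^2$; the left-hand side of the target estimate becomes, up to cross-terms absorbable for large $\tau$, the quantity $\tau\int_K|\nabla v|^2+\tau^3\int_K v^2$.

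Next, I would split $P_\tau = A + B$ into the formal self-adjoint part $Av = \Delta v + \tau^2|\nabla\phi|^2 v$ and the formal antisymmetric part $Bv = -2\tau\,\nabla\phi\cdot\nabla v - \tau\Delta\phi\cdot v$ (on $C_0^2(K)$), and use the polar identity
\begin{equation*}
\|P_\tau v\|_{L^2}^2 \;=\; \|Av\|_{L^2}^2 + \|Bv\|_{L^2}^2 + ([A,B]v, v).
\end{equation*}

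The central step is the commutator calculation. Integrating by parts so as to move all derivatives off $v$, one obtains
\begin{equation*}
([A,B]v, v) = 4\tau\!\int_K\!\mathrm{Hess}(\phi)(\nabla v,\nabla v)\,dx + 4\tau^3\!\int_K\!\mathrm{Hess}(\phi)(\nabla\phi,\nabla\phi)\,v^2\,dx - \tau\!\int_K\!\Delta^2\phi\,v^2\,dx.
\end{equation*}
Substituting $\phi = e^{\alpha\rho}$ gives $\mathrm{Hess}(\phi) = \alpha^2\phi\,\nabla\rho\otimes\nabla\rho + \alpha\phi\,\mathrm{Hess}(\rho)$, and the hypothesis that $\rho$ has no critical points on $K$ yields $|\nabla\rho|\ge c_0>0$; hence $\mathrm{Hess}(\phi)(\nabla\phi,\nabla\phi)=\alpha^4\phi^3|\nabla\rho|^4+O(\alpha^3)\ge c\alpha^4$ for $\alpha$ large. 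The gradient term $\mathrm{Hess}(\phi)(\nabla v,\nabla v)$ is positive along $\nabla\rho$ via the rank-one piece, while the component transverse to $\nabla\rho$ is controlled by retaining $\|Bv\|_{L^2}^2\ge 4\tau^2\int|\nabla\phi\cdot\nabla v|^2$ together with a Young-inequality absorption against the $\tau^3\int v^2$ term. Fixing $\alpha$ large enough (depending only on $\rho$) to dominate the indefinite $\mathrm{Hess}(\rho)$ contribution and then taking $\tau_0$ large enough to absorb the lower-order $\tau\int\Delta^2\phi\cdot v^2$ remainder produces $\|P_\tau v\|_{L^2}^2 \ge c\tau\int_K|\nabla v|^2 + c\tau^3\int_K v^2$ for $\tau\ge\tau_0$.

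Finally, I would undo the substitution using $|\nabla w|^2 e^{2\tau\phi}\le 2|\nabla v|^2 + 2\tau^2|\nabla\phi|^2 v^2$; the correction is absorbed by $\tau^3\int v^2 = \tau^3\int e^{2\tau\phi}|w|^2$ for $\tau$ large, which gives the claimed inequality. The chief difficulty is the positivity of $([A,B]v,v)$ in the directions of $\nabla v$ transverse to $\nabla\rho$: the convexification $\phi=e^{\alpha\rho}$ provides the crucial rank-one amplification along $\nabla\rho$, but controlling the transverse gradient component requires carefully combining the $\|Bv\|_{L^2}^2$ contribution with a Young inequality and the $\tau^3 v^2$ lower bound, and the bookkeeping of lower-order terms must be organized so that $\alpha$ is chosen first and $\tau_0$ only afterwards.
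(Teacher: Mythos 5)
The paper does not prove this lemma at all: it is quoted from the literature (the proof given is a citation to \cite{nechita2020unique}, Corollary~2.3). Your sketch is essentially the standard H\"ormander conjugation argument that underlies that reference, and most of it is sound: the conjugated operator $P_\tau$, the symmetric/antisymmetric splitting, the identity $\|P_\tau v\|^2=\|Av\|^2+\|Bv\|^2+([A,B]v,v)$, the commutator formula with the two Hessian terms and the $\Delta^2\phi$ remainder, the computation $\mathrm{Hess}(\phi)(\nabla\phi,\nabla\phi)=\alpha^4\phi^3|\nabla\rho|^4+O(\alpha^3)$ using $|\nabla\rho|\ge c_0$ on the compact $K$, and the order of quantifiers (fix $\alpha$ first, then $\tau_0$) are all correct and match both the statement and the standard proof.

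The one step that does not work as written is your treatment of the transverse gradient. The quantity $\|Bv\|^2\gtrsim \tau^2\int|\nabla\phi\cdot\nabla v|^2$ only sees the \emph{longitudinal} derivative $\nabla\rho\cdot\nabla v$ (since $\nabla\phi=\alpha\phi\nabla\rho$), exactly like the rank-one piece of $\mathrm{Hess}(\phi)$; it gives no information on the component of $\nabla v$ orthogonal to $\nabla\rho$, so it cannot compensate the indefinite contribution $4\tau\alpha\int\phi\,\mathrm{Hess}(\rho)(\nabla v,\nabla v)$ in those directions. The standard repair uses the \emph{symmetric} part instead: from $-(Av,v)=\int|\nabla v|^2-\tau^2\int|\nabla\phi|^2v^2$ one gets
\begin{equation*}
\tau\int_K|\nabla v|^2\,dx\;\le\;\tfrac12\|Av\|^2+\tfrac{\tau^2}{2}\|v\|^2+C\alpha^2\tau^3\int_K\phi^2v^2\,dx,
\end{equation*}
and after multiplying by a constant of size $O(\alpha)$ to dominate the $\mathrm{Hess}(\rho)$ term, the resulting $O(\alpha^3)\tau^3\int\phi^2v^2$ error is still absorbed by the $c\,\alpha^4\tau^3\int\phi^3v^2$ positivity from the commutator, which is precisely why the convexification exponent $\alpha$ must be taken large \emph{before} $\tau_0$. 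With that substitution your argument closes; the rest of the sketch, including the final unconjugation step, is fine.
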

\begin{proof}
    See \cite[Corollary 2.3]{nechita2020unique}.
    
\end{proof}

Now we derive four Hölder type stability estimates with optimized parameters. The first one is the estimate for $H^1$-norm in the $H^2$ space. The second one is the estimate for $H^1$-norm in the $H^1$-space, and the last one is for $L^2$-norm in the $L^2$ space. The latter three estimates can be used in showing convergence of the stabilized finite element method. Their proof will be based on the first estimate.

\begin{co}[Three-ball inequality]\thlabel{tbi}
    Let $x_0\in \Omega$ and $0<r_1<r_2<d(x_0,\partial\Omega)$. Define $B_j=B(x_0,r_j)$, $j=1,2$. Then there exists $\kappa\in (0,1)$ depending only on $\Omega$, we have that $\forall \epsilon\in(0,1)$, there exists $C>0$ depending on $\epsilon$ and $\Omega$, such that for $u\in H^2(\Omega)$ it holds that:
    $$||u||_{H^1(B_2)}\le \exp(C\kappa^{1-\epsilon}(||P||^{\frac{2}{3}}_{L^\infty(\Omega)}+1))(||u||_{H^1(B_1)}+||-\Delta u+Pu||_{L^2(\Omega)})^\kappa ||u||_{H^1(\Omega)}^{1-\kappa}.$$
\end{co}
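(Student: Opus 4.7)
The plan is to apply the Carleman estimate of \thref{car} to a cutoff of $u$ with a radial weight concentrated near $x_0$, and then optimize in the Carleman parameter $\tau$ to extract the Hölder interpolation. I would introduce an auxiliary radius $r_3$ with $r_2 < r_3 < d(x_0,\partial\Omega)$ and two radii $0<r_1^-<r_1^+<r_1$ sitting just inside $r_1$; choose $\rho(x)=g(|x-x_0|)$ with $g$ smooth and strictly decreasing, so that $\phi=e^{\alpha\rho}$ has no critical points away from $x_0$; and pick $\chi\in C^2_0(B(x_0,r_3))$ with $\chi\equiv 1$ on $B_2\setminus B(x_0,r_1^+)$ and $\chi\equiv 0$ on $B(x_0,r_1^-)$. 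By a standard density argument, \thref{car} applies to $w=\chi u\in H^2_0(K)$ for $K$ any compact subset of $\Omega$ containing $\mathrm{supp}(\chi)$ but not $x_0$.

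Using $\Delta(\chi u)=\chi\Delta u + 2\nabla\chi\cdot\nabla u + u\Delta\chi$ together with $-\Delta u = f - Pu$ (where $f=-\Delta u + Pu$), the right-hand side of the Carleman estimate splits into the potential contribution $\|P\|_{L^\infty}^2\int e^{2\tau\phi}|\chi u|^2$, the source term $\int e^{2\tau\phi}|\chi f|^2$, and commutator terms supported on the two shells $\{r_1^-\le|x-x_0|\le r_1^+\}\subset B_1$ and $B(x_0,r_3)\setminus B_2$. The potential term is absorbed by the $\tau^3$ contribution on the left as soon as $\tau\ge\tau_P:=C(\|P\|_{L^\infty}^{2/3}+1)$, which is exactly the mechanism producing the $\|P\|_{L^\infty}^{2/3}$ exponent in the final constant. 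Restricting the left-hand side to $B_2\setminus B(x_0,r_1^+)$ where $\phi\ge M_-:=\min_{|x-x_0|=r_2}\phi$, bounding the commutator exponentials by $e^{2\tau M_{\mathrm{in}}}$ and $e^{2\tau M_{\mathrm{out}}}$ with $M_{\mathrm{in}}:=\max_{|x-x_0|=r_1^-}\phi$ and $M_{\mathrm{out}}:=\max_{|x-x_0|=r_3}\phi$, dividing by $e^{2\tau M_-}$, and adding back $\|u\|_{H^1(B_1)}$ to recover the full ball on the left, I would arrive at the two-parameter bound
\begin{equation*}
\|u\|_{H^1(B_2)}\le C e^{\tau\delta_1}\bigl(\|u\|_{H^1(B_1)}+\|-\Delta u+Pu\|_{L^2(\Omega)}\bigr) + C e^{-\tau\delta_2}\|u\|_{H^1(\Omega)},\qquad \tau\ge\tau_P,
\end{equation*}
with $\delta_1:=M_{\mathrm{in}}-M_->0$ and $\delta_2:=M_--M_{\mathrm{out}}>0$ depending only on $\Omega$, thanks to the strict radial monotonicity of $\rho$.

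Finally, optimizing in $\tau$ by setting $\tau_*=(\delta_1+\delta_2)^{-1}\log(B/A)$ with $A=\|u\|_{H^1(B_1)}+\|{-}\Delta u+Pu\|_{L^2(\Omega)}$ and $B=\|u\|_{H^1(\Omega)}$ balances the two terms and yields the Hölder interpolation with exponent $\kappa=\delta_2/(\delta_1+\delta_2)\in(0,1)$; when $\tau_*<\tau_P$ one falls back on $\tau=\tau_P$, the corresponding factor $e^{\tau_P\delta_1}$ accounting for the $\|P\|_{L^\infty}^{2/3}$-dependence. The main technical nuisance I anticipate is arranging the bookkeeping so that the prefactor comes out as $\exp(C\kappa^{1-\epsilon}(\|P\|_{L^\infty}^{2/3}+1))$ rather than the cruder $\exp(C(\|P\|_{L^\infty}^{2/3}+1))$: recovering the $\kappa^{1-\epsilon}$ refinement requires a careful case split on the size of $B/A$ combined with a Young-type inequality, which trades the loss in $\kappa$ against an $\epsilon$-dependent prefactor absorbed into $C$.
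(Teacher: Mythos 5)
Your overall strategy coincides with the paper's: apply the Carleman estimate of \thref{car} to a cutoff $\chi u$ with the radial weight $\phi=e^{\alpha\rho}$, $\rho=-d(x,x_0)$, absorb the potential contribution into the $\tau^3$ term for $\tau\ge C(\|P\|_{L^\infty}^{2/3}+1)$ (which is indeed where the $2/3$ exponent comes from), and convert the resulting two-exponential bound into a H\"older interpolation. The only real difference is that the paper delegates the final optimization in $\tau$, including the $\kappa^{1-\epsilon}$ refinement of the prefactor, to the quoted \thref{log}, whereas you propose to carry it out by hand; that part of your plan is sound.

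There is, however, one concrete slip that as written makes the argument collapse. You take $\chi\equiv 1$ only on $B_2\setminus B(x_0,r_1^{+})$ with $\operatorname{supp}\chi\subset B(x_0,r_3)$, so the outer commutator $[\Delta,\chi]u$ is supported on $B(x_0,r_3)\setminus B_2$. Since the weight is radially decreasing and this shell touches $\partial B_2$, the supremum of $\phi$ over it is $\Phi(r_2)=M_-$, not $\max_{|x-x_0|=r_3}\phi$; your claimed bound $e^{2\tau\phi}\le e^{2\tau M_{\mathrm{out}}}$ on the commutator support is therefore false, the correct value of $\delta_2=M_--M_{\mathrm{out}}$ is $0$, the second term in your two-parameter inequality carries no decay in $\tau$, and the optimization returns $\kappa=0$ (the trivial bound $\|u\|_{H^1(B_2)}\le C\|u\|_{H^1(\Omega)}$). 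The repair is standard and is exactly what the paper's proof does with its five nested balls $B_0\subset B_1\subset B_2\subset B_3\subset B_4$: insist that $\chi\equiv 1$ on the strictly larger set $B_3\setminus B_1$ with $r_2<r_3$, so that the outer commutator lives in $B_4\setminus B_3$ where $\phi\le\Phi(r_3)<\Phi(r_2)$. This yields $\delta_2=\Phi(r_2)-\Phi(r_3)>0$ and the exponent $\kappa=(\Phi(r_2)-\Phi(r_3))/(1-\Phi(r_3))$, after which your optimization step goes through.
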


\begin{proof}
    See Appendix \ref{sec:prooftbi}
    
\end{proof}
\begin{remark}
    Note that for simplicity, we only proved \thref{tbi} for this specific geometry, say, $B_1\subset B_2\subset\Omega$, but it can be shown to hold for a larger class of geometric settings. Let such a geometry be $\omega\subset B\subset\Omega$.
\end{remark}
\begin{co}[Unique continuation property]\thlabel{uniqueness}
    Assume problem (\ref{pbl}) has a solution, and the conditional stability estimate in \thref{tbi} holds for any $B$ such that $\omega\subset B\subset \Omega$. Then the solution is unique.
\end{co}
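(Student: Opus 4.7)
The plan is a standard linearization-plus-propagation argument: reduce uniqueness to showing that a homogeneous solution vanishing on $\omega$ must vanish on $\Omega$, and then feed the three-ball inequality.

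First, suppose $u_1,u_2 \in H^1(\Omega)$ are two solutions of \eqref{pbl}, and set $w := u_1 - u_2$. By linearity, $w$ satisfies $-\Delta w + Pw = 0$ in $\Omega$ (distributionally) and $w = 0$ in $\omega$. Since $P \in L^\infty(\Omega)$, we have $-\Delta w = -Pw \in L^2_{\mathrm{loc}}(\Omega)$, so interior elliptic regularity gives $w \in H^2_{\mathrm{loc}}(\Omega)$. This is what lets us apply \thref{tbi}, which is stated for $H^2$ functions, on relatively compact subdomains.

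Next, fix any domain $B$ with $\omega \subset B \subset\subset \Omega$ belonging to the class for which \thref{tbi} (in the extended geometry mentioned in the remark) applies. Applying it to $w$ on $B$ yields
\begin{equation*}
\|w\|_{H^1(B)} \;\le\; C \bigl(\|w\|_{H^1(\omega)} + \|-\Delta w + Pw\|_{L^2(\Omega)} \bigr)^{\kappa} \, \|w\|_{H^1(\Omega)}^{1-\kappa}.
\end{equation*}
Both quantities in the first factor vanish: $w \equiv 0$ in $\omega$ and $-\Delta w + Pw \equiv 0$ in $\Omega$. Hence $\|w\|_{H^1(B)} = 0$, i.e.\ $w \equiv 0$ on $B$.

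Finally, I would conclude by a standard propagation-of-smallness / chain argument. Given any $x \in \Omega$, connect $\omega$ to a neighborhood of $x$ through a finite chain of overlapping admissible domains $\omega = B_0, B_1, \dots, B_N \ni x$, each compactly contained in $\Omega$, each overlapping the previous on an open set, and each of the type to which the extended form of \thref{tbi} applies (with the nonempty overlap $B_{j-1}\cap B_j$ playing the role of $\omega$). Iterating the previous paragraph along this chain gives $w \equiv 0$ on each $B_j$, hence $w(x) = 0$. Since $x \in \Omega$ was arbitrary, $u_1 = u_2$. The main obstacle is making this chaining rigorous under the geometric hypotheses implicit in the remark following \thref{tbi}; once an admissible class of ball-like domains is fixed, connectedness of $\Omega$ makes the chain construction standard.
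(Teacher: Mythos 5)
Your proposal is correct and follows essentially the same route as the paper: form the difference $w$ of two solutions, observe that both terms in the first factor of the three-ball inequality vanish, conclude $w\equiv 0$ on $B$, and propagate by iteration/chaining to all of $\Omega$. Your additional remarks on interior elliptic regularity (to justify applying \thref{tbi}, which is stated for $H^2$ functions) and on making the chaining explicit are refinements of, not departures from, the paper's argument.
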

\begin{proof}
    Let $u$, $v$ be two different solutions to (\ref{pbl}), consider the difference $w=u-v$, we have
    \begin{equation*}
    \left\{\begin{array}{rcc}
         -\Delta w+Pw& =0 & \texttt{in $\Omega$} \\
         u&  =0 & \texttt{in $\omega$}
    \end{array}.
    \right.
\end{equation*}

By the estimate in \thref{tbi}, the right-hand side vanishes, and thus $w=0$ in $B_2$. By iterating the argument $w=0$ in $\Omega$. The conclusion follows.

\end{proof}
Note also that in \thref{tbi}, $u$ is required to be $H^2(\Omega)$ because $||-\Delta u+Pu||_{L^2(\Omega)}$ is used on the right-hand side. We want to generalize the estimate to all functions in $H^1(\Omega)$.
\begin{co}\thlabel{co2}
    Let $B_1\subset B_2\subset \Omega$ be defined as in \thref{tbi}. Let $u\in H^1(\Omega)$. Then the following estimate holds
    \begin{equation*}
    \begin{aligned}
        ||u||_{H^1(B_2)}\le & C(P)(||u||_{L^2(B_1)}+||-\Delta u+Pu||_{H^{-1}(\Omega)})^\kappa\\
        &\cdot(||u||_{L^2(\Omega)}+||-\Delta u+Pu||_{H^{-1}(\Omega)})^{1-\kappa}
        \end{aligned}
    \end{equation*}
Furthermore, if $P\ge 0$ on $\Omega$, then $C(P) = \exp\left(C\kappa^{1-\epsilon}(||P||^{\frac{2}{3}}_{L^\infty(\Omega)}+1)\right)$.
\end{co}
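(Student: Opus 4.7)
The obstacle compared with \thref{tbi} is that we now only control $-\Delta u + Pu$ in $H^{-1}$ (not $L^2$) and $u$ in $L^2$ (not $H^1$) on the small ball and on $\Omega$. The plan is to split $u = u_r + u_s$, where $u_s \in H^1_0(\Omega)$ absorbs the distributional part of the source so that the remainder $u_r$ satisfies a Schr\"odinger equation with $L^2$ right-hand side and is therefore in $H^2_{\mathrm{loc}}(\Omega)$; then \thref{tbi} applied to $u_r$, followed by interior Caccioppoli-type estimates on slightly shrunken nested balls, will trade the two $H^1$ norms for $L^2$ norms and yield the claimed H\"older interpolation.

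Concretely, set $g := -\Delta u + Pu \in H^{-1}(\Omega)$ and let $u_s \in H^1_0(\Omega)$ be the Lax--Milgram solution of $-\Delta u_s = g$ in $\Omega$, so that $\|u_s\|_{H^1(\Omega)} \le C\|g\|_{H^{-1}(\Omega)}$. Setting $u_r := u - u_s \in H^1(\Omega)$, one directly checks that $-\Delta u_r + P u_r = -Pu_s \in L^2(\Omega)$ with $\|-\Delta u_r + Pu_r\|_{L^2(\Omega)} \le C\|P\|_{L^\infty}\|g\|_{H^{-1}(\Omega)}$, and standard interior regularity then upgrades $u_r$ to $H^2_{\mathrm{loc}}(\Omega)$. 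Fix concentric auxiliary balls $B_1^- \subset\subset B_1 \subset B_2 \subset\subset \Omega' \subset\subset \Omega'' \subset\subset \Omega$ and apply \thref{tbi} (in the more general geometry permitted by the remark following it) to $u_r$ with inner ball $B_1^-$, outer ball $B_2$, and ambient $\Omega'$; then apply a Caccioppoli-type interior $H^1$ estimate for the equation $(-\Delta + P)u_r = -Pu_s$ on the pairs $(B_1^-, B_1)$ and $(\Omega', \Omega'')$ to replace the two $H^1$ norms of $u_r$ by the corresponding $L^2$ norms on the larger balls plus an $L^2$ norm of $Pu_s$.

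Setting $A := \|u\|_{L^2(B_1)} + \|g\|_{H^{-1}(\Omega)}$ and $B := \|u\|_{L^2(\Omega)} + \|g\|_{H^{-1}(\Omega)}$, the identity $u_r = u - u_s$ combined with $\|u_s\|_{L^2(\Omega)} \le C\|g\|_{H^{-1}(\Omega)}$ yields $\|u_r\|_{H^1(B_1^-)} \le C(P) A$ and $\|u_r\|_{H^1(\Omega')} \le C(P) B$. Substituting into \thref{tbi} gives $\|u_r\|_{H^1(B_2)} \le C(P) A^\kappa B^{1-\kappa}$, and combining with $\|u_s\|_{H^1(B_2)} \le \|u_s\|_{H^1(\Omega)} \le C\|g\|_{H^{-1}(\Omega)} \le CA$ together with the elementary fact $A \le A^\kappa B^{1-\kappa}$ (since $A \le B$ and $\kappa \in (0,1)$) closes the estimate via the triangle inequality on $u = u_r + u_s$. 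The sharper form of $C(P)$ under $P \ge 0$ follows because the Caccioppoli energy identities then carry a nonnegative $P$-term, so those constants become independent of $\|P\|_{L^\infty}$, and the remaining polynomial $\|P\|_{L^\infty}$ factors from the decomposition are dominated by the exponential factor in \thref{tbi}.

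The main technical delicacy is that the additive contribution $\|u_s\|_{H^1(B_2)}$ produced by the splitting must be controlled by $A$ and not merely by the larger quantity $B$, since otherwise its presence would destroy the Hölder interpolation (and in particular the unique continuation content of the estimate). This is precisely why $u_s$ is defined through the pure Laplacian rather than through $-\Delta + P$: the bound $\|u_s\|_{H^1} \le C\|g\|_{H^{-1}}$ is then free of any $\|u\|_{L^2(\Omega)}$ contribution that would otherwise reintroduce $B$ on the small-norm side of the estimate.
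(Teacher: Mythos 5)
Your proof is correct, but it follows a genuinely different route from the paper's. The paper does not lift the source with the Dirichlet Laplacian: it solves an auxiliary Schr\"odinger problem $-\Delta w+Pw=\psi(-\Delta u+Pu)$ with the complex Robin condition $\partial_n w+iw=0$ on a ball $B_4$ (well-posedness is \thref{aux}, proved via unique continuation and the bounded inverse theorem), so that $v=u-w$ satisfies the \emph{homogeneous} equation on $B_3$ and \thref{tbi} applies with no residual source; and it lowers the $H^1$ norms on the inner and outer sets to $L^2$ norms by estimating the commutator $[-\Delta,\chi]u$ in $(H^1(B_1))'$ through integration by parts, rather than by Caccioppoli inequalities. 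The trade-offs are as follows. Your Lax--Milgram lift is elementary and its constant is $P$-independent by construction, but it leaves the residual source $-Pu_s$, so a factor $\|P\|_{L^\infty}$ enters the small-norm term $A$ and must be absorbed into the exponential of \thref{tbi}; this is harmless for the stated form of $C(P)$ when $P\ge 0$ (since $\|P\|_{L^\infty}\le C\exp\bigl(C\kappa^{1-\epsilon}\|P\|_{L^\infty}^{2/3}\bigr)$ with $C$ depending on $\Omega$ through $\kappa$), but it is the one place where your constant tracking is looser than the paper's, and you should make the absorption explicit. Conversely, the paper's Robin correction eliminates the source entirely but rests on the nonconstructive solvability of \thref{aux}, so its $C(P)$ is explicit only for $P\ge 0$ as well. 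Your identification of the key delicacy --- that $u_s$ must be bounded by $A$ and not by $B$ --- is exactly the point that makes either decomposition work, and your use of interior $H^2_{\mathrm{loc}}$ regularity of $u_r$ to legitimize \thref{tbi} plays the role of the paper's density argument; both are valid.
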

\begin{proof}
    See Appendix \ref{sec:proofco2}.
    
\end{proof}

In \thref{co2} we derive a conditional stability estimate for $u\in H^1(\Omega)$. Now we give another bound for $||u||_{L^2(B_2)}$, $\forall u\in H^1(\Omega)$.
\begin{co}\thlabel{L2error}
    Let the geometric setting be the same as in \thref{tbi}. Then there exists $\kappa\in (0,1)$ depending only on $\Omega$, we have that $\forall \epsilon\in(0,1)$, there exists $C>0$ depending on $\epsilon$ and $\Omega$, such that $\forall u\in H^1(\Omega)$ it holds that:
    $$||u||_{L^2(B_2)}\le C(P)\left(||u||_{L^2(B_1)}+||-\Delta u+Pu||_{H^{-1}(\Omega)}\right)^\kappa ||u||_{L^2(\Omega)}^{1-\kappa},$$
    where $C(P) = \exp\left(C\kappa^{1-\epsilon}(||P||^{\frac{2}{3}}_{L^\infty(\Omega)}+1)\right)$.
\end{co}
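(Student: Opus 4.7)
The plan is to deduce Corollary \ref{L2error} directly from Corollary \ref{co2}, using only the trivial embedding $L^2(B_2) \hookrightarrow H^1(B_2)$ together with an elementary case-split to absorb the residual term in the last factor on the right-hand side. Write $r := \|-\Delta u + Pu\|_{H^{-1}(\Omega)}$ for brevity and let $C(P) = \exp\bigl(C\kappa^{1-\epsilon}(\|P\|^{2/3}_{L^\infty(\Omega)}+1)\bigr)$ denote the constant from Corollary \ref{co2}.

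First, since $\|u\|_{L^2(B_2)} \le \|u\|_{H^1(B_2)}$, Corollary \ref{co2} gives at once
\begin{equation*}
\|u\|_{L^2(B_2)} \le C(P)\bigl(\|u\|_{L^2(B_1)}+r\bigr)^{\kappa}\bigl(\|u\|_{L^2(\Omega)}+r\bigr)^{1-\kappa}.
\end{equation*}
The only remaining task is to replace the factor $(\|u\|_{L^2(\Omega)}+r)^{1-\kappa}$ by $\|u\|_{L^2(\Omega)}^{1-\kappa}$ (up to a universal constant).

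For this I would split into two cases. If $r \le \|u\|_{L^2(\Omega)}$, then $(\|u\|_{L^2(\Omega)}+r)^{1-\kappa} \le 2^{1-\kappa}\|u\|_{L^2(\Omega)}^{1-\kappa}$, and the desired estimate follows with constant $2^{1-\kappa}C(P)$. If instead $r > \|u\|_{L^2(\Omega)}$, one cannot use the output of Corollary \ref{co2} directly; instead use the trivial bound $\|u\|_{L^2(B_2)} \le \|u\|_{L^2(\Omega)}$, and rewrite
\begin{equation*}
\|u\|_{L^2(\Omega)} = \|u\|_{L^2(\Omega)}^{\kappa}\|u\|_{L^2(\Omega)}^{1-\kappa} \le r^{\kappa}\|u\|_{L^2(\Omega)}^{1-\kappa} \le \bigl(\|u\|_{L^2(B_1)}+r\bigr)^{\kappa}\|u\|_{L^2(\Omega)}^{1-\kappa},
\end{equation*}
which gives the claimed inequality (and actually with constant $1$). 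Taking the maximum of the two constants yields the conclusion.

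There is no real obstacle here; the only subtlety is recognising that one cannot apply Corollary \ref{co2} in the regime where the residual $r$ dominates $\|u\|_{L^2(\Omega)}$, and so one must handle that regime separately via the trivial $L^2$ bound. Provided the constant $C(P)$ is absorbed uniformly in both cases, the statement follows with the same explicit form of $C(P)$ advertised after the inequality.
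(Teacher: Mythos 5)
Your case-split is arithmetically sound: when $r:=\|-\Delta u+Pu\|_{H^{-1}(\Omega)}\le \|u\|_{L^2(\Omega)}$ the extra $r$ in the last factor is absorbed at the cost of $2^{1-\kappa}$, and when $r>\|u\|_{L^2(\Omega)}$ the trivial bound $\|u\|_{L^2(B_2)}\le\|u\|_{L^2(\Omega)}\le r^{\kappa}\|u\|_{L^2(\Omega)}^{1-\kappa}$ closes the argument. The genuine gap is in the constant. \thref{co2} guarantees the explicit form $C(P)=\exp\bigl(C\kappa^{1-\epsilon}(\|P\|_{L^\infty(\Omega)}^{2/3}+1)\bigr)$ only under the additional hypothesis $P\ge 0$; for a general bounded potential its constant comes from the bounded inverse theorem applied to the auxiliary complex-Robin problem of \thref{aux} and is not quantified. \thref{L2error}, by contrast, asserts the explicit exponential constant for every $P\in L^\infty(\Omega)$, and the remark following it in the text makes clear that this exact constant is precisely what the corollary adds over \thref{co2}. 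So your reduction proves the inequality with an unquantified $C(P)$ in general, and recovers the advertised constant only for nonnegative potentials.

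The paper's own proof takes an entirely different route: it establishes a shifted semiclassical Carleman estimate at the $H^{-1}_{scl}$ level (\thref{sft}), absorbing the potential into the Carleman inequality by taking $\hbar$ small relative to $\|P\|_{L^\infty(\Omega)}^{-2/3}$, and then runs the three-ball/cut-off argument directly on $\|u\|_{L^2}$ with the residual measured in $H^{-1}(\Omega)$. This bypasses the auxiliary boundary-value problem altogether and is what produces the explicit dependence on $\|P\|_{L^\infty(\Omega)}^{2/3}$ for arbitrary sign-changing potentials. If you are content with a generic $C(P)$, or willing to assume $P\ge 0$, your argument is complete and considerably shorter; to prove the corollary as stated you need either the Carleman-based approach or some other means of making the constant in \thref{co2} explicit for general $P$.
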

\begin{proof}
    See Appendix \ref{sec:proofL2error}.
    
\end{proof}
Note that by shifting the $H^1(\Omega)$ norm to the $L^2(\Omega)$ norm, we get an exact constant depending on $P$ but win nothing in the order of $\kappa$. However, a recent result \cite[Theorem 2.2]{monsuur2024ultra} on the conditional stability estimate shows that \thref{L2error} is not optimal in terms of $||-\Delta u+Pu||_{H^{-1}(\Omega)}$. In fact, we have
\begin{co}[$L^2$ error estimate under the ultra-weak norm]\thlabel{ultraweak}
    Let $B_1\subset B_2\subset \Omega$ be defined as in \thref{tbi}. Let $u\in L^2(\Omega)$. Then the following estimate holds
    \begin{equation*}
    \begin{aligned}
        ||u||_{L^2(B_2)}\le & C(P)(||u||_{L^2(B_1)}+||-\Delta u+Pu||_{H^{-2}(\Omega)})^\kappa\\
        &\cdot(||u||_{L^2(\Omega)}+||-\Delta u+Pu||_{H^{-2}(\Omega)})^{1-\kappa}
        \end{aligned}
    \end{equation*}
    Furthermore, if $P\ge 0$ on $\Omega$, then $C(P) = \exp\left(C\kappa^{1-\epsilon}(||P||^{\frac{2}{3}}_{L^\infty(\Omega)}+1)\right)$.
\end{co}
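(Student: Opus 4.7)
The plan is to lift $u$ to an $L^2$ function satisfying the homogeneous equation and then invoke Corollary \thref{L2error}. Given $u \in L^2(\Omega)$ with $g := (-\Delta + P)u \in H^{-2}(\Omega)$, I would first construct an auxiliary $v \in L^2(\Omega)$ solving $(-\Delta + P)v = g$ in the sense of distributions, together with
$$\|v\|_{L^2(\Omega)} \le C(P)\,\|g\|_{H^{-2}(\Omega)}.$$
This is standard by transposition: for $\psi \in L^2(\Omega)$, let $\phi$ solve the Dirichlet problem $(-\Delta + P)\phi = \psi$ in $\Omega$ with $\phi|_{\partial\Omega} = 0$. On a smooth or convex $\Omega$, elliptic $H^2$-regularity yields $\|\phi\|_{H^2 \cap H^1_0} \le C(P)\|\psi\|_{L^2}$, and one defines $v$ through the pairing $\langle v, \psi\rangle_{L^2} := \langle g, \phi\rangle_{H^{-2}, H^2 \cap H^1_0}$.

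Setting $w := u - v$, we have $w \in L^2(\Omega)$ with $(-\Delta + P) w = 0$ in $\Omega$. By interior elliptic regularity $w \in H^1_{\mathrm{loc}}(\Omega)$, so after shrinking $\Omega$ slightly to a subdomain still containing $B_2$, Corollary \thref{L2error} applies to $w$, and its residual term vanishes:
$$\|w\|_{L^2(B_2)} \le C(P)\,\|w\|_{L^2(B_1)}^\kappa \|w\|_{L^2(\Omega)}^{1-\kappa}.$$
The triangle inequality then gives $\|w\|_{L^2(B_j)} \le \|u\|_{L^2(B_j)} + \|v\|_{L^2(\Omega)}$, and combined with the bound $\|v\|_{L^2(\Omega)} \le C(P)\|g\|_{H^{-2}(\Omega)}$ this upgrades both factors on the right-hand side to the sum form appearing in the claim. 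Writing $\|u\|_{L^2(B_2)} \le \|w\|_{L^2(B_2)} + \|v\|_{L^2(\Omega)}$ and using $\|v\|_{L^2(\Omega)} = \|v\|_{L^2(\Omega)}^{\kappa}\|v\|_{L^2(\Omega)}^{1-\kappa}$ to split the leftover $v$-term into the product structure closes the estimate.

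The main obstacle is the well-posedness of the auxiliary Dirichlet problem, which fails if $0$ happens to be a Dirichlet eigenvalue of $-\Delta + P$. I would handle this by shifting $P \mapsto P + \mu$ for $\mu$ large enough to make the shifted operator coercive on $H^1_0$. The equation for $u$ then becomes $(-\Delta + P + \mu) u = g + \mu u$; since $\mu u \in L^2 \hookrightarrow H^{-2}$, the transposition step produces $v$ with $\|v\|_{L^2(\Omega)} \le C(P,\mu)(\|g\|_{H^{-2}(\Omega)} + \|u\|_{L^2(\Omega)})$, and the extra $\|u\|_{L^2(\Omega)}$ term is harmless because it is already present in the $(1-\kappa)$-factor of the target inequality. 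The more delicate quantitative point is tracking the dependence of the lifting constant on $\|P\|_{L^\infty}$ in order to recover $C(P) = \exp(C\kappa^{1-\epsilon}(\|P\|_{L^\infty}^{2/3}+1))$ in the case $P \ge 0$; this likely requires redoing the lifting through the Carleman-based architecture of the earlier corollaries, or tuning $\mu \sim \|P\|_{L^\infty}$ and tracking carefully, rather than invoking black-box elliptic regularity.
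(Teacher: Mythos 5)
Your overall strategy is the right one and matches the paper's in spirit: split $u$ into a piece controlled by $\|(-\Delta+P)u\|_{H^{-2}(\Omega)}$ plus a distributional solution of the homogeneous equation, upgrade the latter to $H^1_{\mathrm{loc}}$ by interior elliptic regularity, and feed it into the earlier conditional stability estimate with vanishing residual. However, the way you construct the lifting has two genuine gaps. First, your fix for the case where $0$ is a Dirichlet eigenvalue does not work: after shifting to $(-\Delta+P+\mu)u=g+\mu u$, the lifted function $v$ satisfies only $\|v\|_{L^2(\Omega)}\le C(\|g\|_{H^{-2}(\Omega)}+\|u\|_{L^2(\Omega)})$, and this $\|v\|_{L^2(\Omega)}$ enters the estimate through $\|w\|_{L^2(B_1)}\le\|u\|_{L^2(B_1)}+\|v\|_{L^2(\Omega)}$ and through the final triangle inequality $\|u\|_{L^2(B_2)}\le\|w\|_{L^2(B_2)}+\|v\|_{L^2(\Omega)}$. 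So $\|u\|_{L^2(\Omega)}$ leaks into the $\kappa$-power factor and appears additively, and the resulting inequality degenerates to $\|u\|_{L^2(B_2)}\lesssim\|u\|_{L^2(\Omega)}$, which is trivially true and carries no unique continuation content. It is not "harmless" that the global norm appears only in the $(1-\kappa)$-factor --- it must appear \emph{only} there. Second, even without the shift, your transposition pairs $g$ with the Dirichlet solution $\phi_\psi\in H^2(\Omega)\cap H^1_0(\Omega)$, whose normal derivative does not vanish; since $H^{-2}(\Omega)$ is the dual of $H^2_0(\Omega)$, the bound $|\langle g,\phi_\psi\rangle|\le\|g\|_{H^{-2}(\Omega)}\|\phi_\psi\|_{H^2(\Omega)}$ is not available, and you would only obtain the estimate with the strictly stronger norm $\|g\|_{(H^2\cap H^1_0)'}$. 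You also need boundary $H^2$-regularity, hence convexity or smoothness of $\Omega$, which the statement does not assume.

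The paper's proof sidesteps all three problems at once by a different lifting: it takes $\mathcal{H}(\Omega)$ to be the range of $-\Delta+P$ acting on $H^2_0(\Omega)$, proves this range is closed via the a priori bound $\|\phi\|_{H^2(\Omega)}\le C(P)\|(-\Delta+P)\phi\|_{L^2(\Omega)}$ (obtained by extending $\phi$ by zero to a ball, using \emph{interior} elliptic regularity, injectivity from unique continuation, and the Peetre--Tartar lemma --- no convexity needed), and then takes $\hat w$ to be the $L^2$-orthogonal projection of $u$ onto $\mathcal{H}(\Omega)$. Because $\hat w=(-\Delta+P)\phi$ for some $\phi\in H^2_0(\Omega)$ by construction, the pairing $\|\hat w\|^2_\Omega=\langle(-\Delta+P)u,\phi\rangle$ is legitimately bounded by $\|(-\Delta+P)u\|_{H^{-2}(\Omega)}\|\phi\|_{H^2(\Omega)}$, and no solvability or spectral assumption is ever invoked: the projection exists whether or not the operator is surjective. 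If you want to salvage your argument, replace the Dirichlet transposition by this projection onto the closed range; the rest of your outline then goes through essentially as you wrote it.
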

\begin{proof}
    See Appendix \ref{sec:proofco2}.
    
\end{proof}
\section{Stabilized finite element method}
\label{sec:finite element}
In this section we propose a stabilized $H^1$-conforming finite element method for problem (\ref{pbl}), with the a priori knowledge that $u\in H^s(\Omega)$ for some $s\ge 1$. We will first form an equivalent optimization problem, and then stabilize the optimization problem at the discrete level. Then we show that the method gives a convergence rate up to the conditional stability. We will also discuss the stability of the discrete system and give precision thresholds.\\

\textbf{Lagrange optimization problem.} In this part, we derive the equivalent optimization problem to problem (\ref{pbl}) and give some useful propositions that we will use repeatedly. From now on, we will make some simplifications in the notation. Let $(\cdot,\cdot)_U, \langle\cdot,\cdot\rangle_U$ denote the inner products $L^2$ and $H^1$ in some domain $U$, respectively. Let $\mathcal{L}$ be the elliptic operator $-\Delta+P$, and $<\cdot,\cdot>_U$ be the duality pairing $H^{-1}(U)-H_0^1(U)$.

\begin{prop}
    Let problem (\ref{pbl}) have a unique solution, then $u\in H^1(\Omega)$ solves problem (\ref{pbl}) iff $u$ solves
    \begin{equation}\label{optpbl}
    \begin{array}{cccc}
         \underset{u\in H^1(\Omega)}{\min}\frac{1}{2}||u-q||^2_\omega &\textit{subject to}& a(u,w) = <f,w>_\Omega , & \forall w\in H^1_0(\Omega) 
    \end{array}
\end{equation}
\end{prop}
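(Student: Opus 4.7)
The plan is to prove both directions of the equivalence separately, with both hinging on the observation that the objective $\tfrac12\|\cdot-q\|^2_\omega$ is nonnegative and attains zero precisely on functions matching $q$ on $\omega$.

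For the forward implication, suppose $u\in H^1(\Omega)$ solves (\ref{pbl}). Then $u=q$ in $\omega$ gives $\tfrac12\|u-q\|^2_\omega=0$, so $u$ trivially minimizes the objective among all feasible points. To verify feasibility, I would multiply the PDE $-\Delta u + Pu = f$ against a test function $w\in H^1_0(\Omega)$ and integrate by parts; since $w$ has zero trace the boundary contribution vanishes and I recover $a(u,w)=\langle f,w\rangle_\Omega$. Hence $u$ is feasible and optimal for (\ref{optpbl}).

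For the reverse implication, I first invoke the existence hypothesis together with \thref{uniqueness} to obtain the unique solution $u^\star$ of (\ref{pbl}). By the forward direction just established, $u^\star$ is feasible for (\ref{optpbl}) with objective value zero, so the infimum of (\ref{optpbl}) equals zero. Now if $u$ minimizes (\ref{optpbl}), nonnegativity forces $\|u-q\|_\omega=0$, i.e., $u=q$ a.e.\ in $\omega$. The variational identity $a(u,w)=\langle f,w\rangle_\Omega$ for every $w\in H^1_0(\Omega)$ is exactly the distributional formulation of $-\Delta u + Pu = f$ in $\Omega$, so $u$ solves (\ref{pbl}).

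There is no real obstacle here; the only subtlety worth flagging is that the reverse direction genuinely uses the existence assumption for (\ref{pbl}) to pin the infimum of (\ref{optpbl}) at zero. Without existence, a minimizer of (\ref{optpbl}) would still satisfy the PDE constraint distributionally but might fail the interior condition $u=q$ on $\omega$, because the infimum of the objective could be strictly positive.
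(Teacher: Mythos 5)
Your argument is correct. The paper actually states this proposition without printing any proof, treating it as immediate, and what you have written is precisely the standard argument one would supply: the forward direction is integration by parts plus the observation that the objective vanishes at a solution, and the reverse direction uses the existence hypothesis to pin the infimum at zero so that any minimizer must satisfy $u=q$ on $\omega$ in addition to the weak PDE constraint. Your closing remark correctly identifies the only point of substance, namely that the existence assumption is what makes the reverse implication work.
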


Now we construct the Lagrange functional for the optimization problem (\ref{optpbl}) with the PDE constraint. Let
\begin{equation*}
    L(u,z) := \frac{1}{2}||u-q ||^2_\omega + a(u,z) - <f, z>_\Omega, \;\;\; (u,z)\in H^1(\Omega)\times H^1_0(\Omega),
\end{equation*}
where $z\in H^1_0(\Omega)$ acts as a Lagrange multiplier. The corresponding Euler-Lagrange equation reads:
\begin{equation}\label{conel}
    \left\{\begin{array}{l}
         0=\partial_{u}Lv=(u- q,v)_\omega +a(v,z) \\
         0=\partial_{z}Lw=a(u,w)-< f,w>_\Omega 
    \end{array}\right. \;\; \forall (v,w)\in H^1(\Omega)\times H^1_0(\Omega).
\end{equation}
The continuous problem is clearly ill-posed because the equivalent PDE problem is ill-posed. Instead of regularizing the problem at this stage, we will transfer the functional optimization problem to the discrete level and then discuss the stabilization method. \\

\textbf{Finite element settings.} From now on we will consider for simplicity that $\Omega\subset\mathbb{R}^d$, $d\in \{2,3\}$, is a polygonal/polyhedral domain. Let $\{\mathcal{T}_h\}_{h>0}$ be a quasi-uniform family of shape-regular meshes covering $\Omega$, each triangulation $\mathcal{T}_h$ containing elements $K$ with maximum diameter $h$. Let $p\ge1$ denote the degree of polynomial approximation and let $\mathbb{P}_p(K)$ be the space of polynomials of degree at most $p$ defined in $K$. Let the $H^1$-conforming finite element space
$$ V^p_h :=\{v_h\in H^1(\Omega):v_h|_K\in \mathbb{P}_p(K), \forall K \in \mathcal{T}_h \}$$
and its subspace with homogeneous boundary conditions
$$W_h^p := V^p_h \cap H^1_0(\Omega).$$
Moreover, define the space of piecewise constant functions, $V_h^0 := \{v_h\in L^2(\Omega):v_h|_K= C_K\in \mathbb{R}, \forall K \in \mathcal{T}_h \}$.\\

The corresponding discrete problem will be the restriction of $(u,z)$ on $V^p_h\times W^p_h$. One can consider the Lagrange functional
\begin{equation*}
    L^{singular}_h(u_h,z_h) := \frac{1}{2}h^{-2\alpha}||u_h-q||^2_\omega + a(u_h,z_h) - <f, z_h>_\Omega, \;\;\; (u_h,z_h)\in V^p_h\times W^p_h,
\end{equation*}
where $h^{-2\alpha}$ is the mesh-specific weighting term between interior difference $u_h-q$ and the PDE constraint. We will specify the choice of $\alpha$ later.\\

Saddle points of the above Lagrange functional may not be unique and the corresponding linear system may be singular. Thus we introduce primal and dual stabilizers, which are bi-linear forms on $V^p_h$ and $W^p_h$, $s_h(\cdot,\cdot)$ and $s_h^*(\cdot,\cdot)$, and a term $G(\cdot,\cdot)$ to keep weak consistency. $G(\cdot,\cdot)$ is designed so that $s_h(u,\cdot) = G(f,\cdot)$. This leads to the stabilized Lagrange functional:
\begin{equation*}
\begin{aligned}
    L_h(u_h,z_h) := &\frac{1}{2}h^{-2\alpha}||u_h-q||^2_\omega +\frac{1}{2}s_h(u_h,u_h)\\
    &-\frac{1}{2}s_h^*(z_h,z_h)+a(u_h,z_h)-<f,z_h>_\Omega+G(f,u_h).
    \end{aligned}
\end{equation*}
The corresponding Euler-Lagrange equations read: for any $(v_h,w_h)\in V_h^p\times W^p_h$,
\begin{equation*}
    \left\{\begin{array}{l}
      0=\partial_{u_h}L_hv_h=h^{-2\alpha}(u_h-q,v_h)_\omega + s_h(u_h,v_h)+a(v_h,z_h)+G(f,v_h) \\
         0=\partial_{z_h}L_hw_h=a(u_h,w_h)-s_h^*(z_h,w_h)-<f,w_h>_\Omega, 
    \end{array}\right.
\end{equation*}
and we arrive at the stabilized finite element method: find $(u_h,z_h)\in V^p_h\times W^p_h$, such that $\forall (v_h,w_h)\in V^p_h\times W^p_h$,
\begin{equation}\label{sys}
    \left\{\begin{array}{rl}
        h^{-2\alpha}(u_h,v_h)_\omega+s_h(u_h,v_h)+a(v_h,z_h) &= h^{-2\alpha}(q,v_h)_\omega+G(f,v_h)  \\
         a(u_h,w_h)-s_h^*(z_h,w_h)&=<f,w_h>_\Omega 
    \end{array}\right..
\end{equation}\\

\textbf{Stabilization.} Now let us introduce our choice of stabilizers. Let $\mathcal{L}_h$ be the discrete Schrödinger operator, namely, $\forall v_h\in V^p_h$,
\begin{equation*}
    \mathcal{L}_hv_h := \sum_K(-\Delta +P)v_h|_K.
\end{equation*}
For a given mesh $\mathcal{T}_h$, we consider the set of all interior edges (relative to $\Omega$) / faces of elements, denoted by $\mathcal{F}_i$, and the jump function on each edge/face $F$ by $\llbracket\cdot\rrbracket_F$, omitting its subscript whenever there is no confusion. For $v_h\in V^p_h$, we denote the jump function of its normal gradient across $F\in \mathcal{F}_i$ by 
\begin{equation*}
\llbracket\nabla v_h\cdot n\rrbracket_F :=\nabla v_h\cdot n_1|_{K_1} + \nabla v_h\cdot n_2|_{K_2},
\end{equation*}
with $K_1,K_2\in\mathcal{T}$ being two elements such that $K_1\cap K_2=F$, and $n_1,n_2$ the outward pointing normals with respect to $K_1$ and $K_2$. Note that for $v\in H^{\frac32+\epsilon}(\Omega)$, $\llbracket\nabla v\cdot n\rrbracket_F =0$ is well defined. $\forall v_h \in V^p_h$, define 
\begin{equation*}
    \mathcal{J}_h(u_h,v_h) = \sum_F\int_Fh\llbracket\nabla u_h\cdot n\rrbracket\cdot\llbracket\nabla v_h\cdot n\rrbracket_FdS.
\end{equation*}

Now define $s_h:V^p_h\times V_h^p\to \mathbb{R}$ by 
\begin{equation}\label{primstab}
    s_h(u_h,v_h):= \mathcal{J}_h(u_h,v_h)+ (h\mathcal{L}_hu_h,h\mathcal{L}_hv_h)_\Omega+h^{2(s-1)}\langle u_h,v_h\rangle_\Omega,
\end{equation}
$s^*_\eta(z_h,w_h):W^p_h\times W_h^p\to\mathbb{R}$ by
\begin{equation}\label{dualstabeta}
    s^*_\eta(z_h,w_h):= h^{2\eta}\mathcal{J}_h(z_h,w_h)+h^{2\eta}\int_{\partial \Omega}h\partial_nz_h\partial_nw_hdS+h^{2\eta}( h\mathcal{L}_hz_h,h\mathcal{L}_hw_h)_\Omega,
\end{equation}
and $s^*_h$ by
\begin{equation}\label{dualstab}
    s_h^*(z_h,w_h):=s^*_\eta(z_h, w_h)+h^\tau\langle z_h, w_h\rangle_\Omega,
\end{equation}
where $h^{2\eta}$ and $h^\tau$ is the weights with respect to $s_\eta^*$ and $\langle z_h, w_h\rangle_\Omega$ in $s^*_h$. 
\begin{remark}
    The intuition for choosing $s^*$ in the form of (\ref{dualstab}) instead of using the $H^1$-inner product is that we would like the stabilizer to provide the necessary stability and vanish with mesh refinement. This is essential in order to prove error estimate for solutions with low regularity. See \thref{cvg}.
\end{remark}
Now we define the consistency term $G(f,v_h)$. Let $f_h$ be the projection from $H^{-1}(\Omega)$ to $W^p_h$ such that for any $w_h\in W^p_h$,
\begin{equation*}
    <f, w_h>_\Omega = (f_h, w_h)_\Omega.
\end{equation*}
Specifically, if $f\in L^2(\Omega)$, we have
$
    (f_h,f_h)_\Omega = (f, f_h)_\Omega \le ||f||_\Omega ||f_h||_\Omega,
$
and hence the standard stability of the $L^2$-projection holds
$
    ||f_h||_\Omega \le ||f||_\Omega.
$

Define $G:H^{-1}(\Omega)\times V^p_h\to \mathbb{R}$ by
\begin{equation}\label{G}
    G(f,v_h) = h^2(f_h, \mathcal{L}_hv_h)_\Omega.
\end{equation}
\\

\textbf{Inverse and trace inequalities.} Now we give some useful inequalities and approximation results (see, for example, \cite{di2011mathematical}).

(1) Continuous trace inequality.
\begin{equation}\label{tc1}
    ||v||_{\partial K}\le C(h^{-\frac{1}{2}}||v||_K+h^{\frac{1}{2}}||\nabla v||_K), \;\; \forall v\in H^1(K).
\end{equation}

(2) Discrete trace inequality.
\begin{equation}\label{tc2}
    ||\nabla v_h \cdot n||_{\partial K}\le Ch^{-\frac{1}{2}}||\nabla v_h||_K, \;\; \forall v_h\in \mathbb{P}_p(K).
\end{equation}

(3) Inverse inequality.
\begin{equation}\label{inv}
    ||\nabla v_h||_K\le Ch^{-1}||v_h||_K, \;\; \forall v_h\in\mathbb{P}_p(K).
\end{equation}\\

\textbf{Interpolation.} We now give interpolants we will use repeatedly. 

(1) The Scott-Zhang operator $\pi_{sz}: H^{\frac{1}{2}}(\Omega) \to V_h^p$. It keeps homogeneous boundary condition and has the optimal approximation property, see \cite{ciarlet2013analysis}:
$$ (\sum_{K\in\mathcal{T}_h}||v-\pi_{sz}v||^2_{H^m(K)})^\frac{1}{2} \le Ch^{s-m}|v|_{H^{s}(\Omega)}, $$
whenever $v\in H^{s}(\Omega)$ with $\frac{1}{2}<s\le p+1$ and $m\in \{0:[s]\}$.\\

(2) $L^2$-projection $\pi_0: L^2 \to V_h^0$. It has local optimal approximation property, see \cite[Theorem 18.16]{ern2021finite}:
$$||v-\pi_0v||_K \le Ch^{s}|v|_{H^{s}(K)},$$
whenever $K\in \mathcal{T}_h$, $v\in H^{s}(K)$ with $0\le s \le 1$. Note that $V_h^0$ is not $H^1$-conforming.\\

(3) $H^1$-conforming $L^2$-projection $\pi_h: L^2 \to V_h^p$ with $p\ge 1$. It is the best approximation in the $L^2$-norm, and thus it has optimal approximation property in $L^2$-norm, see \cite[Corollary 22.9]{ern2021finite}
\begin{equation*}
    ||v-\pi_hv||_\Omega \le C h^s|v|_{H^{s}(\Omega)},
\end{equation*}
whenever $v\in H^s(\Omega)$ with $0\le s \le p+1$.\\

\textbf{Properties of the stabilized system.} We first show that $\mathcal{J}_h(v_h,v_h)$ and $h^2(\mathcal{L}_hv_h,\mathcal{L}_hv_h)_\Omega- G(f_h,v_h)$ are weakly consistent.
\begin{prop}\thlabel{weakcons}
    Let $\pi_{sz}$ be the Scott-Zhang operator and $v\in H^s(\Omega)$ for $s\ge 1$. Then there is a constant $C$ such that
    \begin{equation*}
         \mathcal{J}_h(\pi_{sz}v,\pi_{sz}v)\le Ch^{2(s-1)}||v||^2_{H^{s}(\Omega)}.
    \end{equation*}
    Moreover, if $s<2$ we have
    \begin{equation*}
        ||h\mathcal{L}_h(\pi_{sz}v)||_\Omega \le C(1+||P||_{L^\infty})h^{s-1}||v||_{H^s(\Omega)},
    \end{equation*}
    and if $s\ge 2$, we have
    \begin{equation*}
        ||h\mathcal{L}_h(\pi_{sz}v-v)||_\Omega \le C(1+h^2||P||_{L^\infty})h^{s-1}||v||_{H^s(\Omega)}.
    \end{equation*}
\end{prop}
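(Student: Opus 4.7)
The plan is to prove the three bounds separately, each reducing to a local element-wise estimate that I then sum. The tools are the Scott--Zhang approximation and stability properties stated earlier, the continuous and discrete trace inequalities \eqref{tc1}--\eqref{tc2}, and the inverse inequality \eqref{inv}.

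For the jump term I would split on the regularity. When $s=1$, the discrete trace inequality immediately gives $\sum_F h\|\llbracket \nabla \pi_{sz}v\cdot n\rrbracket\|_F^2 \le C\sum_K \|\nabla \pi_{sz}v\|_K^2 \le C\|v\|_{H^1(\Omega)}^2$, which is the required bound since $h^{2(s-1)}=1$. When $s\ge 2$, the function $\nabla v$ has a well-defined, single-valued $L^2$-trace on interior faces, so $\llbracket \nabla \pi_{sz}v\cdot n\rrbracket = \llbracket \nabla(\pi_{sz}v-v)\cdot n\rrbracket$; I then apply the continuous trace inequality \eqref{tc1} to $\nabla(\pi_{sz}v - v)$ element-wise and invoke the $H^1$ and piecewise $H^2$ Scott--Zhang estimates to obtain $Ch^{2(s-1)}\|v\|_{H^s}^2$. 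For intermediate $s\in(1,2)$, I would close the gap by function-space interpolation, using $H^s(\Omega)=[H^1(\Omega),H^2(\Omega)]_{s-1,2}$ applied to the linear map $v\mapsto(\sqrt{h}\,\llbracket\nabla\pi_{sz}v\cdot n\rrbracket)_F$ into weighted face-$L^2$.

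For the Laplace estimates I would treat the two regimes with different devices. When $s<2$, since $v$ need not be in $H^2$, I cannot write $\Delta\pi_{sz}v$ as a perturbation of $\Delta v$. Instead I introduce a piecewise affine interpolant $I_1 v$ into $V_h^1$ so that $\Delta(I_1 v)|_K=0$ and hence $\Delta\pi_{sz}v|_K=\Delta(\pi_{sz}v-I_1 v)|_K$; the inverse inequality \eqref{inv} applied to the polynomial $\pi_{sz}v - I_1 v$ reduces this to $Ch^{-1}\|\nabla(\pi_{sz}v - I_1 v)\|_K$, and a triangle inequality combined with the $H^1$-approximation of both $\pi_{sz}$ and $I_1$ yields the $h^{s-1}$ rate after multiplying by $h$ and summing. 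The zero-order part $\|hP\pi_{sz}v\|_\Omega$ is absorbed via the $L^2$-stability of $\pi_{sz}$ and the elementary bound $h\le h^{s-1}$ valid for $s\le 2$ and $h$ bounded. When $s\ge 2$, the regularity of $v$ lets me use the Scott--Zhang approximations $\sum_K\|\pi_{sz}v-v\|_{H^2(K)}^2 \le Ch^{2(s-2)}\|v\|_{H^s}^2$ and $\|\pi_{sz}v-v\|_{L^2(\Omega)}\le Ch^s\|v\|_{H^s}$ directly; combined with the factor $h$ and $\|P\|_{L^\infty}$, this produces the $(1+h^2\|P\|_{L^\infty})\,h^{s-1}$ bound announced.

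The main obstacle is the jump estimate for fractional $s\in(1,2)$: neither the $s=1$ stability argument nor the $s\ge 2$ cancellation argument delivers the sharp $h^{s-1}$ rate on its own. I expect the interpolation-of-operators approach to close this gap once the two endpoint estimates are established, but one has to verify that the weighted face-$L^2$ space is the correct interpolation target. Everything else is a routine combination of the Scott--Zhang approximation results with the trace and inverse inequalities.
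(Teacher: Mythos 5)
Your proposal is essentially sound, but for the jump term it takes a genuinely different and heavier route than the paper. You split into the endpoints $s=1$ (discrete trace inequality plus $H^1$-stability of $\pi_{sz}$) and $s\ge 2$ (using $\llbracket\nabla v\cdot n\rrbracket=0$ to subtract $v$ and apply the continuous trace inequality), and then propose to recover $1<s<2$ by interpolation of operators on $v\mapsto(\sqrt{h}\,\llbracket\nabla\pi_{sz}v\cdot n\rrbracket)_F$. That does work — the target $\prod_F L^2(F)$ is a fixed Hilbert space for each mesh and $H^s(\Omega)=[H^1(\Omega),H^2(\Omega)]_{s-1,2}$ on a Lipschitz domain, so the $h^{s-1}$ rate follows — but the paper avoids the case split and the interpolation machinery altogether: since $\pi_h^n\nabla\pi_{sz}v$ is $H^1$-conforming its normal jump vanishes, so $\mathcal{J}_h(\pi_{sz}v,\pi_{sz}v)$ equals the jump of $\nabla\pi_{sz}v-\pi_h^n\nabla\pi_{sz}v$; the discrete trace and inverse inequalities reduce this to $\sum_K\|\nabla\pi_{sz}v-\pi_h^n\nabla\pi_{sz}v\|_K^2$, and a triangle inequality through $\nabla v$ leaves only the $L^2$-projection error of $\nabla v\in H^{s-1}(\Omega)$, which is $O(h^{s-1})$ for all fractional $s-1\in[0,p+1]$ by the stated approximation property of $\pi_h$. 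Your treatment of the element-wise Laplacian for $s<2$ is, by contrast, the same idea as the paper's in different clothing: you subtract a piecewise-affine quasi-interpolant $I_1v$ whose Laplacian vanishes element-wise, the paper subtracts $\pi_0^n\nabla\pi_{sz}v$ whose divergence vanishes element-wise; both then use the inverse inequality and $H^1$-approximation. One caveat for your version: $I_1$ must be a quasi-interpolant (Scott--Zhang or Cl\'ement type) with $H^1$-optimal approximation for $v\in H^s$, $s<2$; the nodal Lagrange interpolant is not defined on such $v$ in dimensions $d\ge 2$. The $s\ge 2$ Laplacian bound and the zero-order $P$-terms are handled as in the paper.
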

\begin{proof}
\textbf{Step1}. We give the estimate for the jump of gradients. Let $\pi_h^n:(L^2(\Omega))^n \to (V_h^p)^n$ be defined by
\begin{equation*}
    \pi_h^n\mathbf{u} = (\pi_hu_1,\pi_hu_2,...,\pi_hu_n)^t,
\end{equation*}
where $\mathbf{u}= (u_1,u_2,...,u_n)^t$. Since the range of $\pi_h$ is in $H^1(\Omega)$, $\llbracket \pi_h w\rrbracket = 0$ for all $w\in L^2(\Omega)$. This gives
    \begin{equation*}
    \begin{aligned}
         \mathcal{J}_h(\pi_{sz}v,\pi_{sz}v) 
         &= \underset{F\in \mathcal{F}_i}{\sum}\int_Fh\llbracket(\nabla \pi_{sz}v-\pi_h^n\nabla \pi_{sz}v)\cdot n\rrbracket_F\llbracket (\nabla \pi_{sz}v-\pi_h^n\nabla \pi_{sz}v)\cdot n\rrbracket_FdS\\
         &\le C\sum_K||\nabla \pi_{sz}v-\pi_h^n\nabla \pi_{sz}v_h||^2_K\\
         &\le C(||\nabla (\pi_{sz}v - v)||^2_\Omega + ||\nabla v - \pi_h^n \nabla v)||^2_\Omega + ||\pi_h^n \nabla (v -\pi_{sz}v)||^2_\Omega) \\
         &\le Ch^{2(s-1)}||v||^2_{H^s(\Omega)}
    \end{aligned}
    \end{equation*}
where the first inequality is from (\ref{tc1}), (\ref{inv}) and the third inequality is from the $H^1$ optimal approximation property of $\pi_{sz}$ and the $L^2$ optimal approximation property of $\pi^n_h$.\\

\textbf{Step 2.} Now we only need to give the estimate for $||h\mathcal{L}_h(\pi_{sz}v)||_\Omega$ when $1\le s < 2$, since when $s\ge 2$ the estimate is given immediately by the optimal approximation property of $\pi_{sz}$. Let $\pi_0^n:(L^2(\Omega))^n \to (V_h^0)^n$ be defined similarly as $\pi_h^n$. We have $\nabla\cdot\pi_0^n\nabla i_hv = 0$ on each $K\in\mathcal{T}_h$ since $\pi_0$ maps a function to piece-wise constants. Let $\Delta_h$ be the element-wise Laplacian, we have
\begin{equation*}
    \begin{aligned}
        ||h\Delta_h \pi_{sz}v||_\Omega &= (\sum_K||h\nabla \cdot \nabla \pi_{sz}v||^2_K)^\frac{1}{2} \\
        & = (\sum_K||h\nabla\cdot (\nabla \pi_{sz}v-\pi_0^n \nabla \pi_{sz}v)||_K^2)^\frac{1}{2} \\
        &\le C(\sum_{K}||\nabla \pi_{sz}v-\pi_0^n \nabla \pi_{sz}v||^2_K)^\frac{1}{2} \\
        & \le C\left(\sum_K(||\nabla (\pi_{sz}v-v)||^2_K+||\nabla v - \pi_0^n \nabla v||^2_K +||\pi_0^n\nabla(v-\pi_{sz}v)||^2_K)\right)^\frac{1}{2} \\
        & \le Ch^{s-1} ||v||_{H^s(\Omega)},
    \end{aligned}
\end{equation*}
where the first inequality is from (\ref{inv}), the third inequality is from optimal approximation property of $\pi_{sz}$ and local optimal approximation property of $\pi_0$, and the last equality is from the summability of $H^1$ functions. The estimate for $||h\mathcal{L}_h(\pi_{sz}v)||_\Omega$ is then obtained by adding the potential term.

\end{proof}

Now we show that the discrete problem is well-posed. Let
\begin{equation}\label{stabnorm}
    |||(v_h,w_h)|||^2=h^{-2\alpha}(v_h,v_h)_\omega+s_h(v_h,v_h)+s_h^*(w_h,w_h).
\end{equation}
This is a norm since we have (c.f.(\ref{primstab}),(\ref{dualstab}))
\begin{equation*}
    h^{2(s-1)}\langle v_h,v_h\rangle_\Omega \le s_h(v_h,v_h),
\end{equation*}
and
\begin{equation*}
    h^\tau\langle w_h,w_h\rangle_\Omega \le s_h^*(w_h,w_h).
\end{equation*}
Note that also in the absence of Tikhonov regularization \eqref{stabnorm} is a norm thanks to unique continuation. It is however not strong enough to allow for error estimates.

Now define the discrete bilinear form
\begin{equation}\label{A}
\begin{aligned}
    A[(u_h,z_h),(v_h,w_h)]:=& h^{-2\alpha}(u_h,v_h)_\omega+s_h(u_h,v_h)+ a(v_h,z_h)\\
    &+a(u_h,w_h)-s_h^*(z_h,w_h).
    \end{aligned}
\end{equation}
\begin{prop}\thlabel{BNBA}
    The linear system defined by (\ref{sys}) is well-posed. In fact, the following inf-sup condition holds
    \begin{equation*}
    \underset{(v_h,w_h)\in V_h^p\times W_h^p}{\sup}\frac{A_h[(u_h,z_h),(v_h,w_h)]}{|||(v_h,w_h)|||}\ge |||(u_h,z_h)|||.
    \end{equation*}
\end{prop}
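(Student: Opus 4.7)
The plan is to establish the inf-sup condition by exhibiting an explicit test pair $(v_h,w_h)$ for which the bilinear form $A[(u_h,z_h),(v_h,w_h)]$ reproduces the square of the triple norm of $(u_h,z_h)$, while the triple norm of the test pair equals that of $(u_h,z_h)$. The crucial algebraic observation is that the coupling terms $a(v_h,z_h)$ and $a(u_h,w_h)$ enter $A$ with a sign asymmetry with respect to the stabilizer terms: the stabilizer $s_h(u_h,v_h)$ is symmetric-positive in the primal variable, whereas $-s_h^*(z_h,w_h)$ carries a minus sign in the dual variable. This suggests flipping the sign of the dual test function to turn the coupling into a cancellation.

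Concretely, I would choose $(v_h,w_h) = (u_h,-z_h)$. Substituting into the definition \eqref{A} gives
\begin{equation*}
\begin{aligned}
A[(u_h,z_h),(u_h,-z_h)] &= h^{-2\alpha}(u_h,u_h)_\omega+s_h(u_h,u_h)+a(u_h,z_h)\\
&\quad -a(u_h,z_h)+s_h^*(z_h,z_h),
\end{aligned}
\end{equation*}
where the two $a(\cdot,\cdot)$ terms cancel exactly. The remaining expression is precisely $|||(u_h,z_h)|||^2$ by the definition \eqref{stabnorm}. Meanwhile, since $s_h^*$ is a symmetric quadratic form, $s_h^*(-z_h,-z_h)=s_h^*(z_h,z_h)$, so $|||(u_h,-z_h)|||=|||(u_h,z_h)|||$.

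Combining these two observations,
\begin{equation*}
\sup_{(v_h,w_h)\in V_h^p\times W_h^p}\frac{A[(u_h,z_h),(v_h,w_h)]}{|||(v_h,w_h)|||}\ge \frac{A[(u_h,z_h),(u_h,-z_h)]}{|||(u_h,-z_h)|||}=\frac{|||(u_h,z_h)|||^2}{|||(u_h,z_h)|||}=|||(u_h,z_h)|||,
\end{equation*}
which is the claimed inf-sup bound. Well-posedness of the square linear system \eqref{sys} follows: since $\dim V_h^p+\dim W_h^p<\infty$ the inf-sup inequality implies injectivity of the associated operator, hence invertibility.

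There is essentially no obstacle in this proof; the only point that needs verification is that $|||\cdot|||$ is genuinely a norm, which was already noted in the text from the Tikhonov-like lower bounds $h^{2(s-1)}\langle v_h,v_h\rangle_\Omega\le s_h(v_h,v_h)$ and $h^\tau\langle w_h,w_h\rangle_\Omega\le s_h^*(w_h,w_h)$. The design of the stabilizers $s_h$ and $s_h^*$ was made precisely to make this simple symmetric cancellation argument succeed, which is why the bound holds with constant $1$ on the right-hand side.
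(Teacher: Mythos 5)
Your proposal is correct and follows exactly the paper's own argument: test with $(v_h,w_h)=(u_h,-z_h)$, observe the cancellation of the $a(\cdot,\cdot)$ coupling terms, and identify the remainder with $|||(u_h,z_h)|||^2$ while noting $|||(u_h,-z_h)|||=|||(u_h,z_h)|||$. The paper's proof is just the one-line computation you give, so there is nothing to add.
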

\begin{proof}
Let $v_h=u_h$, $w_h=-z_h$ we have
\begin{equation*}
    \begin{aligned}
        A_h[(u_h,z_h),(u_h,-z_h)]&=h^{-2\alpha}||u_h||^2_\omega+s_h(u_h,u_h)+s_h^*(z_h,z_h)\\
        &=|||(u_h,z_h)|||^2.
    \end{aligned}
\end{equation*}
\end{proof}
\thref{BNBA} gives the stability of the linear system, now we give an upper bound of the stabilizers.
\begin{prop}\thlabel{boundA}
    Let $[(u_h,z_h),(v_h,w_h)]\in [V_h^p\times W_h^p]^2$, $0\le \alpha\le1$ and $\eta,\tau$ non-negative. We have the following bound
    \begin{equation*}
        s_h(u_h,v_h)+s^*_h(z_h,w_h)\le C(P)\left(||u_h||_{H^1(\Omega)}||v_h||_{H^1(\Omega)}+||z_h||_{H^1(\Omega)}||w_h||_{H^1(\Omega)}\right),
    \end{equation*}
    where $C(P) = C(1+h^2||P||^2_{L^\infty}).$
\end{prop}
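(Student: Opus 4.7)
The plan is to bound each of the six building blocks making up $s_h(u_h,v_h)+s_h^*(z_h,w_h)$ individually by a multiple of $\|u_h\|_{H^1}\|v_h\|_{H^1}$ or $\|z_h\|_{H^1}\|w_h\|_{H^1}$, with the $P$-dependence tracked only through the $h\mathcal{L}_h$ contributions. Cauchy--Schwarz in the relevant inner product reduces everything to diagonal estimates, so it suffices to show $\mathcal{J}_h(v_h,v_h)\le C\|v_h\|_{H^1}^2$, $\|h\mathcal{L}_h v_h\|_\Omega^2 \le C(1+h^2\|P\|_{L^\infty}^2)\|v_h\|_{H^1}^2$, and analogous bounds on the remaining Tikhonov and boundary pieces.

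First I would handle the jump term. On each face $F=K_1\cap K_2$ the discrete trace inequality (\ref{tc2}) gives $\|\nabla v_h\cdot n\|_F^2\le Ch^{-1}(\|\nabla v_h\|_{K_1}^2+\|\nabla v_h\|_{K_2}^2)$, so after multiplication by $h$ and summing over faces I get $\mathcal{J}_h(v_h,v_h)\le C\|\nabla v_h\|_\Omega^2\le C\|v_h\|_{H^1(\Omega)}^2$, with an identical estimate for $z_h,w_h$. Next, for the bulk term $(h\mathcal{L}_h v_h,h\mathcal{L}_h v_h)_\Omega$ I would use the triangle inequality to split into $h\Delta_h v_h$ and $hPv_h$. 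Applying the inverse inequality (\ref{inv}) twice on each element yields $h\|\Delta v_h\|_K\le C\|\nabla v_h\|_K$, while $h\|Pv_h\|_K\le h\|P\|_{L^\infty}\|v_h\|_K$. Summing and using $(a+b)^2\le 2a^2+2b^2$ I obtain $\|h\mathcal{L}_h v_h\|_\Omega^2\le C(1+h^2\|P\|_{L^\infty}^2)\|v_h\|_{H^1}^2$, which is precisely the source of $C(P)$.

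The remaining pieces are harmless. The Tikhonov parts $h^{2(s-1)}\langle u_h,v_h\rangle_\Omega$ and $h^\tau\langle z_h,w_h\rangle_\Omega$ are trivially $\le \|u_h\|_{H^1}\|v_h\|_{H^1}$ (resp.\ for $z_h,w_h$), using $s\ge 1$ and $\tau\ge 0$ so that the $h$-powers are bounded by a constant for $h$ below a fixed mesh threshold. The boundary piece $h^{2\eta}\int_{\partial\Omega}h\,\partial_n z_h\,\partial_n w_h\,dS$ is treated exactly as $\mathcal{J}_h$: the discrete trace inequality (\ref{tc2}) applied on the faces of $\partial K\cap\partial\Omega$ gives $h\|\partial_n z_h\|_{\partial\Omega}^2\le C\|\nabla z_h\|_\Omega^2$, and $h^{2\eta}\le 1$ absorbs the extra factor.

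Putting the six estimates together with two applications of Cauchy--Schwarz (one in $(\cdot,\cdot)_\Omega$ for the $\mathcal{L}_h$ product and one on the resulting scalar bilinear form) produces the required inequality with $C(P)=C(1+h^2\|P\|_{L^\infty}^2)$. There is no real obstacle here; the only thing to watch is to keep the $P$-dependence lumped exclusively in the $\mathcal{L}_h$ terms so that the final constant has the stated form, and to use the assumption $s\ge 1$, $\eta,\tau\ge 0$ to control the positive powers of $h$ in the Tikhonov and $s^*_\eta$ weights.
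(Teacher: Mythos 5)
Your proposal is correct and follows essentially the same route as the paper: bound each component of the stabilizers separately via trace and inverse inequalities, isolate the $P$-dependence in the $h\mathcal{L}_h$ term, and combine with Cauchy--Schwarz. The only cosmetic difference is that you invoke the discrete trace inequality (\ref{tc2}) directly for the jump term where the paper combines (\ref{tc1}) with (\ref{inv}); both yield the same bound.
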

\begin{proof}
    Consider each component of $s_h$. First we have by (\ref{tc1}) and (\ref{inv})
    \begin{equation}\label{boundofjump}
    \begin{aligned}
        \mathcal{J}_h(u_h,v_h)&\le C\sum_K(h||u_h||_{H^2(K)}+||u_h||_{H^1(K)})(h||v_h||_{H^2(K)}+||v_h||_{H^1(K)})\\
        &\le C||u_h||_{H^1(\Omega)}||v_h||_{H^1(\Omega)}\\
    \end{aligned}
    \end{equation}
    Similarly for the second component we also have 
    \begin{equation}\label{boundofres}
        h^2(\mathcal{L}_hu_h,\mathcal{L}_hv_h)_\Omega\le C(1+h^2||P||^2_{L^\infty})||u_h||_{H^1(\Omega)}||v_h||_{H^1(\Omega)}.
    \end{equation}
    Since $W_h^p\subset V_h^p$, we have by (\ref{boundofjump}) and (\ref{boundofres})
    \begin{equation}\label{s*bound1}
        h^{2\eta}\mathcal{J}_h(z_h,w_h)\le Ch^{2\eta}||z_h||_{H^1(\Omega)}||w_h||_{H^1(\Omega)},
    \end{equation}
    and 
    \begin{equation}\label{s*bound2}
        h^{2\eta}(h\mathcal{L}_hz_h,h\mathcal{L}_hw_h)_\Omega\le C(1+h^2||P||^2_{L^\infty})h^{2\eta}||z_h||_{H^1(\Omega)}||w_h||_{H^1(\Omega)}.
    \end{equation}
    Again, by (\ref{tc2}), we have
    \begin{equation}\label{s*bound3}
        h^{2\eta}\int_{\partial \Omega}h\partial_nz_h\partial_nw_hdS \le Ch^{2\eta}||z_h||_{H^1(\Omega)}||w_h||_{H^1(\Omega)},
    \end{equation}
    and by Cauchy–Schwarz inequality,
    \begin{equation}\label{s*bound4}
        h^\tau(\nabla z_h,\nabla w_h)_\Omega \le h^\tau||z_h||_{H^1(\Omega)} ||w_h||_{H^1(\Omega)}.
    \end{equation}
    Combining (\ref{boundofjump}), (\ref{boundofres}), (\ref{s*bound1}), (\ref{s*bound2}), (\ref{s*bound3}) and (\ref{s*bound4}), we have the desired estimate.
    
\end{proof}

\section{Stability and error analysis}
In this section we discuss the error and stability estimate for the stabilized scheme (\ref{sys}). This will be separated into two cases: $u\in H^s(\Omega)$ for $1\le s<2$ and $s\ge 2$.

\subsection{Convergence analysis for solutions with low regularity}\label{lowreg}
In this part we assume $u\in H^s(\Omega)$ with $1\le s <2$ and $f\in H^{-1}(\Omega)$. Then we prove the solution of (\ref{sys}) converges in this case. Our aim is to show that the discrete solution satisfies an error estimate that is optimal up to the conditional stability estimate. Specifically, we derive optimal convergence rates in the following sense:

\begin{theorem}[Optimal $L^2$ error estimate]\thlabel{ultraweakconv}
    Let $u\in H^s(\Omega)$, $\alpha=1$, $\tau=2$ and $\eta=0$. Let $B$ be a region satisfying estimates in \thref{co2} and \thref{ultraweak}, then the stabilized finite element approximation $u_h$ converges to $u$ under $L^2$ norm optimally up to the conditional stability estimate \thref{ultraweak}, and we have the estimate
    \begin{equation*}
        ||u-u_h||_{B}\le C(P)h^{\kappa s}||u||_{H^s(\Omega)}
    \end{equation*}
\end{theorem}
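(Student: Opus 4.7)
The plan is to combine the discrete inf-sup stability of \thref{BNBA} with the ultra-weak conditional stability of \thref{ultraweak}, proceeding in three stages: first obtain a triple-norm error bound for the discrete problem, then translate it into estimates on $||u-u_h||_{L^2(\omega)}$ and $||\mathcal{L}(u-u_h)||_{H^{-2}(\Omega)}$, and finally apply \thref{ultraweak}.

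First, I would apply the inf-sup condition of \thref{BNBA} to the pair $(u_h - \pi_{sz}u, z_h)$ and use the scheme \eqref{sys}, the identity $a(u,w_h) = <f, w_h>_\Omega$ for $w_h \in W_h^p$, and $q = u|_\omega$ to rewrite
\begin{equation*}
A[(u_h - \pi_{sz}u, z_h), (v_h, w_h)] = h^{-2\alpha}(u - \pi_{sz}u, v_h)_\omega + [G(f, v_h) - s_h(\pi_{sz}u, v_h)] + a(u - \pi_{sz}u, w_h).
\end{equation*}
Each piece on the right is then bounded using \thref{weakcons} (for the consistency of $\mathcal{J}_h(\pi_{sz}u,\cdot)$ and $h\mathcal{L}_h \pi_{sz}u$), \thref{boundA} (for the $H^1$-type contributions in $s_h$ and $s_h^*$), and the standard Scott--Zhang and $L^2$-projection approximation bounds. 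The parameter choices $\alpha=1$, $\tau=2$, $\eta=0$ make the weights align so that, after dividing by $|||(v_h, w_h)|||$ and taking the supremum, one obtains
\begin{equation*}
|||(u_h - \pi_{sz}u, z_h)||| \leq C(P)\, h^{s-1}\,||u||_{H^s(\Omega)}.
\end{equation*}

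Second, I would extract the quantities feeding \thref{ultraweak}. The $h^{-2\alpha}=h^{-2}$ weight inside the triple norm yields $||u_h - \pi_{sz}u||_{L^2(\omega)} \leq C(P) h^s ||u||_{H^s}$, and combining with the $L^2$ approximation error of $\pi_{sz}u$ on $\omega$ gives $||u - u_h||_{L^2(\omega)} \leq C(P) h^s ||u||_{H^s}$. The Tikhonov contribution $h^{2(s-1)}\langle \cdot, \cdot\rangle_\Omega$ in $s_h$ gives $||u_h||_{H^1(\Omega)} \leq C(P)||u||_{H^s}$, so $||u - u_h||_{L^2(\Omega)} \leq C(P)||u||_{H^s}$. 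For the ultra-weak residual, I would test against $\phi \in H^2_0(\Omega)$ with $||\phi||_{H^2}=1$, set $w_h = \pi_{sz}\phi \in W_h^p$ and split
\begin{equation*}
a(u - u_h, \phi) = a(u - u_h, \phi - w_h) + a(u - u_h, w_h).
\end{equation*}
The second term equals $-s_h^*(z_h, w_h)$ by the dual equation of \eqref{sys}; Cauchy--Schwarz with \thref{weakcons} applied at regularity $2$ to $\pi_{sz}\phi$ (benefiting from $\tau=2$, $\eta=0$) gives $s_h^*(w_h, w_h)^{1/2} \leq Ch||\phi||_{H^2}$ and hence a bound $\leq C(P) h^s ||u||_{H^s}||\phi||_{H^2}$. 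For the first term I would perform elementwise integration by parts on the $u_h$ piece, producing volumetric residuals $(\mathcal{L}_h u_h, \phi - w_h)_\Omega$ and face-jump contributions $\sum_F \int_F \llbracket \nabla u_h \cdot n\rrbracket(\phi - w_h)\,dS$, which are controlled by the $O(h^{s-1})$ stabilizer quantities from Stage~1 against the approximation estimates $||\phi - w_h||_{L^2(\Omega)} \leq Ch^2||\phi||_{H^2}$ and $||\phi - w_h||_{L^2(F)} \leq Ch^{3/2}||\phi||_{H^2}$ on interior faces. The combined bound is $||\mathcal{L}(u - u_h)||_{H^{-2}(\Omega)} \leq C(P) h^s ||u||_{H^s}$.

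Finally, substituting these bounds into \thref{ultraweak} produces a small factor of size $\leq C(P) h^s ||u||_{H^s}$ and a large factor of size $\leq C(P)||u||_{H^s}$, whose $\kappa$ and $1-\kappa$ powers combine to yield $||u - u_h||_{L^2(B)} \leq C(P) h^{\kappa s}||u||_{H^s(\Omega)}$. The hard part will be obtaining the $h^s$ rate on $a(u - u_h, \phi - w_h)$: a direct Cauchy--Schwarz in $H^1$ gives only the rate $h$, since $||u - u_h||_{H^1}$ is only bounded rather than decaying with $h$. Resolving this requires transferring derivatives onto the smoother test function $\phi$ via elementwise integration by parts, and exploiting the interplay between the $O(h^{s-1})$ control on $||h\mathcal{L}_h u_h||_\Omega$ and $\mathcal{J}_h(u_h, u_h)^{1/2}$ (derived from Stage~1) and the $O(h^2)$, $O(h^{3/2})$ approximation scalings of $\phi - w_h$ on elements and faces respectively.
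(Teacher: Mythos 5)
Your proposal follows essentially the same route as the paper: a triple-norm bound for $(u_h-\pi_{sz}u,z_h)$ via the inf-sup condition of \thref{BNBA} and the consistency estimates (the paper's \thref{stabnormest1}), then the residual bounds $\|u-u_h\|_{L^2(\omega)}\lesssim h^s$ and $\|\mathcal{L}(u-u_h)\|_{H^{-2}(\Omega)}\lesssim h^s$ obtained by testing against $\phi\in H^2_0(\Omega)$ and splitting off the Scott--Zhang interpolant (the paper's \thref{cvg}), the global bound $\|u-u_h\|_{L^2(\Omega)}\lesssim \|u\|_{H^s(\Omega)}$ (the paper's \thref{bdness1}), and finally the ultra-weak conditional stability estimate \thref{ultraweak}. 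The one place you diverge is the term $a(u-u_h,\phi-w_h)$: the paper bounds it by a direct $H^1$ Cauchy--Schwarz, $\|u_h-u\|_{H^1(\Omega)}\|\phi-w_h\|_{H^1(\Omega)}$, which as written only yields the rate $h$ (not $h^s$) since $\|u_h-u\|_{H^1(\Omega)}$ is merely bounded, whereas you propose elementwise integration by parts, pairing the $O(h^{s-1})$ stabilizer quantities $\|h\mathcal{L}_h(u_h-\pi_{sz}u)\|_\Omega$ and $\mathcal{J}_h(u_h-\pi_{sz}u,u_h-\pi_{sz}u)^{1/2}$ against the $O(h^2)$ and $O(h^{3/2})$ approximation of $\phi-w_h$ on elements and faces. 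This is exactly the device the paper itself uses for the $H^{-1}$ residual in \thref{H-1error12}, and your transplanting it to the $H^{-2}$ setting is what actually secures the full $h^s$ residual rate, and hence $h^{\kappa s}$, for $s\in(1,2)$; so your version of this step is the more careful one.
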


\begin{theorem}[Optimal $H^1$ error estimate]\thlabel{prime12}
    Let $u\in H^s(\Omega)$ and $(u_h,z_h)$ be the solutions to system (\ref{sys}). Let $0\le \alpha\le 1, \eta\tau = 0$ and both non-negative. Let $B$ be a region satisfying estimate in \thref{co2} and \thref{ultraweak}; $\kappa,\epsilon$ be the parameters therein. Then
    $$||u-u_h||_{H^1(B)}\le C(P)h^{\kappa (s-1)}||u||_{H^s(\Omega)}$$
\end{theorem}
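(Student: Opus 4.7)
The plan is to combine the conditional stability estimate \thref{co2} with an a priori energy bound for the discrete error. Setting $e=u-u_h$, \thref{co2} gives
\begin{equation*}
\|e\|_{H^1(B)}\le C(P)\bigl(\|e\|_{L^2(\omega)}+\|\mathcal{L}e\|_{H^{-1}(\Omega)}\bigr)^{\kappa}\bigl(\|e\|_{L^2(\Omega)}+\|\mathcal{L}e\|_{H^{-1}(\Omega)}\bigr)^{1-\kappa},
\end{equation*}
so it suffices to show that the first ``consistency'' factor is of order $h^{s-1}\|u\|_{H^{s}(\Omega)}$ while the second ``stability'' factor stays $O(1)\|u\|_{H^{s}(\Omega)}$.

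The workhorse is an energy bound on $(\xi_h,z_h):=(u_h-\pi_{sz}u,z_h)$ in the mesh-dependent norm $|||\cdot|||$ of \eqref{stabnorm}. By the inf-sup identity of Proposition \thref{BNBA}, I would compute
\begin{equation*}
A[(\xi_h,z_h),(v_h,w_h)]=h^{-2\alpha}(u-\pi_{sz}u,v_h)_\omega+G(f,v_h)-s_h(\pi_{sz}u,v_h)+a(u-\pi_{sz}u,w_h),
\end{equation*}
after using \eqref{sys} together with $u=q$ on $\omega$ and $a(u,w_h)=\langle f,w_h\rangle_\Omega$ for $w_h\in H^1_0(\Omega)$. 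Each term on the right is a consistency residual: the $L^2(\omega)$ term is handled by the Scott--Zhang approximation; the combination $G(f,v_h)-s_h(\pi_{sz}u,v_h)$ is estimated via Proposition \thref{weakcons}, where the elementwise identity $\mathcal{L}u=f$ cancels the leading $\mathcal{L}_h$-part of $s_h$ against $G$; and the Galerkin term $a(u-\pi_{sz}u,w_h)$ is treated by elementwise integration by parts, producing precisely the jump and normal-derivative quantities sitting inside $s^*_\eta$. Each piece is bounded by $C(P)h^{s-1}\|u\|_{H^{s}(\Omega)}\,|||(v_h,w_h)|||$, yielding
\begin{equation*}
|||(\xi_h,z_h)|||\le C(P)\,h^{s-1}\|u\|_{H^{s}(\Omega)}.
\end{equation*}

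With this in hand, the three ingredients of \thref{co2} fall out. The definition of $|||\cdot|||$ gives $\|u_h-q\|_{L^2(\omega)}\le h^{\alpha}|||(\xi_h,z_h)|||+\|\pi_{sz}u-u\|_{L^2(\omega)}$, which is $O(h^{s-1+\alpha}+h^s)\|u\|_{H^{s}(\Omega)}$. The Tikhonov piece $h^{2(s-1)}\langle u_h,u_h\rangle_\Omega\le s_h(u_h,u_h)$ controls the full $H^1$-norm of $u_h$, so the stability factor is $O(1)\|u\|_{H^{s}(\Omega)}$. Finally, the residual $\|\mathcal{L}e\|_{H^{-1}(\Omega)}$ is obtained by testing against $\varphi\in H^1_0(\Omega)$, splitting $\varphi=\pi_{sz}\varphi+(\varphi-\pi_{sz}\varphi)$, using the second line of \eqref{sys} to rewrite $a(u_h,\pi_{sz}\varphi)=\langle f,\pi_{sz}\varphi\rangle_\Omega+s_h^*(z_h,\pi_{sz}\varphi)$, and bounding the residue $a(u-u_h,\varphi-\pi_{sz}\varphi)$ by elementwise integration by parts against the primal stabilizer's jump and $\mathcal{L}_h$ pieces. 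Substituting these three estimates into \thref{co2} produces $\|e\|_{H^1(B)}\le C(P)(h^{s-1}\|u\|_{H^s})^{\kappa}\|u\|_{H^s}^{1-\kappa}=C(P)h^{\kappa(s-1)}\|u\|_{H^{s}(\Omega)}$.

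The main obstacle I anticipate is pushing the residual estimate through uniformly over the two regimes allowed by the hypothesis $\eta\tau=0$. When $\tau>0$ and $\eta=0$, the Tikhonov piece of $s_h^*$ delivers direct $H^1$-control of $z_h$ and the duality estimate is straightforward. When $\eta>0$ and $\tau=0$, however, the $H^1$ coercivity of $s_h^*$ is lost, and one must show that the weighted jump, boundary-normal, and $\mathcal{L}_h$ pieces of $s_\eta^*$ together with the $H^1$ bound on $u_h$ coming from $s_h$ still suffice to close the duality estimate without degrading the $h^{s-1}$ rate.
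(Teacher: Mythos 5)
Your plan follows the paper's proof essentially verbatim: the conditional stability estimate of \thref{co2} applied to $e=u-u_h$, the triple-norm bound $|||(u_h-\pi_{sz}u,z_h)|||\le C(P)h^{s-1}\|u\|_{H^s(\Omega)}$ obtained from the inf-sup condition plus the consistency residuals (this is \thref{stabnormest1}, fed by \thref{consis1}), the $O(1)$ bound on $\|u-u_h\|_{L^2(\Omega)}$ via the Tikhonov piece of $s_h$, and the $H^{-1}$ residual estimate by splitting the test function with the Scott--Zhang interpolant and using the second equation of \eqref{sys} (this is \thref{H-1error12}). One correction to your final paragraph: you have the two regimes reversed. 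Since the Tikhonov term in \eqref{dualstab} is $h^\tau\langle z_h,w_h\rangle_\Omega$, the case $\tau=0$ is the one where $s_h^*$ retains full $H^1$ coercivity and the duality estimate is immediate; it is the case $\eta=0$, $\tau>0$ where the weighted $H^1$ control degrades and one must instead integrate by parts elementwise so that $a(u-\pi_{sz}u,w_h)$ is absorbed by the jump, boundary-normal and $\mathcal{L}_h$ components of $s^*_\eta$ --- exactly as in the proof of \thref{stabnormest1}, so the obstacle you anticipate does not arise where you place it.
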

Notice that we have to specify the choice of parameters in \thref{ultraweakconv} to obtain the optimal $L^2$ convergence, while for $H^1$ convergence in \thref{prime12} we  have some flexibility in the choice of parameters. In the remainder of this subsection, we will prove these theorems.\\

First, we show that the stabilizing terms $s_h(\pi_{sz}u,v_h)$ and $G(f,v_h)$ are bounded by $||u||_{H^s(\Omega)}$ and $|||v_h|||$.

\begin{lemma}\thlabel{consis1}
    Let $u\in H^s(\Omega)$ with $1\le s <2$. $\pi_{sz}u$ be the Scott-Zhang interpolant of $u$ on $V^p_h$. Then there exists some constant $C>0$, such that $ \forall v_h\in V^p_h$,
    \begin{equation*}
        s_h(\pi_{sz}u,v_h) \le C(1+||P||_{L^\infty})h^{s-1}||u||_{H^s(\Omega)} \cdot |||v_h|||,
    \end{equation*}
    and
    \begin{equation*}
        G(f,v_h)\le C(1+||P||_{L^\infty})h^{s-1}||u||_{H^s(\Omega)} \cdot |||v_h|||.
    \end{equation*}
\end{lemma}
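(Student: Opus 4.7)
The plan is to prove the two bounds independently; the first is essentially bookkeeping, while the second requires an integration-by-parts argument against $\pi_{sz}u$ to extract the correct $h$-scaling from the Riesz representative $f_h$.

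\emph{First estimate.} For $s_h(\pi_{sz}u,v_h)$ I would apply Cauchy--Schwarz in the symmetric bilinear form $s_h$,
\begin{equation*}
s_h(\pi_{sz}u,v_h)\leq s_h(\pi_{sz}u,\pi_{sz}u)^{1/2}\,s_h(v_h,v_h)^{1/2}\leq s_h(\pi_{sz}u,\pi_{sz}u)^{1/2}\,|||v_h|||,
\end{equation*}
and then estimate $s_h(\pi_{sz}u,\pi_{sz}u)$ by splitting it into its three constituent parts as in (\ref{primstab}). The jump piece $\mathcal{J}_h(\pi_{sz}u,\pi_{sz}u)$ and the residual piece $||h\mathcal{L}_h\pi_{sz}u||_\Omega^2$ are both bounded by $C(1+||P||_{L^\infty})^2 h^{2(s-1)}||u||_{H^s(\Omega)}^2$ directly from \thref{weakcons}, while the Tikhonov piece $h^{2(s-1)}\langle\pi_{sz}u,\pi_{sz}u\rangle_\Omega$ is controlled by the $H^1$-stability of the Scott--Zhang interpolation.

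\emph{Second estimate.} For $G(f,v_h)=h^2(f_h,\mathcal{L}_hv_h)_\Omega$ the first step is Cauchy--Schwarz,
\begin{equation*}
G(f,v_h)\leq h||f_h||_\Omega\cdot ||h\mathcal{L}_hv_h||_\Omega\leq h||f_h||_\Omega\cdot|||v_h|||,
\end{equation*}
so it suffices to establish $||f_h||_\Omega\leq C(1+||P||_{L^\infty})h^{s-2}||u||_{H^s(\Omega)}$. I would test the defining identity $(f_h,w_h)_\Omega=(\nabla u,\nabla w_h)_\Omega+(Pu,w_h)_\Omega$ with $w_h=f_h\in W_h^p$, insert $\pm\pi_{sz}u$, and integrate by parts element-wise on the term $(\nabla\pi_{sz}u,\nabla f_h)_\Omega$. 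Boundary contributions on $\partial\Omega$ vanish because $f_h\in H_0^1(\Omega)$, yielding
\begin{equation*}
||f_h||_\Omega^2=(\nabla(u-\pi_{sz}u),\nabla f_h)_\Omega+(\mathcal{L}_h\pi_{sz}u,f_h)_\Omega+(P(u-\pi_{sz}u),f_h)_\Omega+\sum_{F\in\mathcal{F}_i}\int_F f_h\llbracket\nabla\pi_{sz}u\cdot n\rrbracket\,dS.
\end{equation*}
Each of the four terms I would bound by $C(1+||P||_{L^\infty})h^{s-2}||u||_{H^s(\Omega)}||f_h||_\Omega$: the first from the $H^1$-approximation of $\pi_{sz}$ paired with the inverse inequality (\ref{inv}) on $\nabla f_h$; the second directly by \thref{weakcons}; the third (which is in fact strictly smaller for $h<1$) from the $L^2$-approximation of $\pi_{sz}$; and the fourth from the jump bound of \thref{weakcons} combined with the discrete trace estimate $(\sum_F h^{-1}||f_h||_F^2)^{1/2}\leq Ch^{-1}||f_h||_\Omega$ obtained by composing (\ref{tc1}) with (\ref{inv}). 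Dividing by $||f_h||_\Omega$ closes the claim.

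\emph{Main difficulty.} The subtle point is that when $u\in H^s(\Omega)$ with $s<2$ one has $f=\mathcal{L}u\in H^{s-2}(\Omega)$ only, so the naive Riesz-plus-inverse bound $||f_h||_\Omega\leq Ch^{-1}||f||_{H^{-1}(\Omega)}$ yields merely $h||f_h||_\Omega\lesssim ||u||_{H^1(\Omega)}$ and wastes the extra regularity when $s>1$. The elementwise integration by parts against $\pi_{sz}u$ is precisely what upgrades the scaling from $h^{-1}$ to $h^{s-2}$; the jump term it introduces is exactly the quantity already controlled by \thref{weakcons}, so no additional structural input is needed beyond the weak consistency of the Scott--Zhang interpolant.
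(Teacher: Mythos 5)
Your argument is correct and follows essentially the same route as the paper's proof: the first bound is obtained from Cauchy--Schwarz on the three constituents of $s_h$ together with \thref{weakcons}, and the second reduces to estimating $h\|f_h\|_\Omega$ by testing the Riesz identity against $f_h$ itself, inserting $\pm\pi_{sz}u$, integrating by parts elementwise, and controlling the resulting jump, residual, and approximation terms with \thref{weakcons} and the trace and inverse inequalities. The paper performs the same computation for a generic $w_h\in W_h^p$ and then specializes, which is only a cosmetic difference.
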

\begin{proof}
    First we have \begin{equation*}
    \begin{aligned}
    s_h(\pi_{sz}u,v_h)&=\mathcal{J}_h(\pi_{sz}u,v_h)+h^{2(s-1)}\langle\pi_{sz}u,v_h\rangle_\Omega+(h\mathcal{L}_h\pi_{sz}u,h\mathcal{L}_hv_h)_\Omega.
    \end{aligned}
\end{equation*}
by \thref{weakcons}, we have
\begin{equation*}
  \mathcal{J}_h(\pi_{sz}u,v_h) \le Ch^{s-1}||u||_{H^s(\Omega)}(\mathcal{J}_h(v_h,v_h))^\frac{1}{2}.
\end{equation*}
and
\begin{equation*}
\begin{aligned}
    (h\mathcal{L}_h\pi_{sz}u,h\mathcal{L}_hv_h)_\Omega &\le ||h\mathcal{L}_h\pi_{sz}u||_\Omega||h\mathcal{L}_hv_h||_\Omega\\
    & \le C(1+||P||_{L^\infty})h^{s-1}||u||_{H^s(\Omega)}||h\mathcal{L}_hv_h||_\Omega.\\
\end{aligned}
\end{equation*}
Now since 
\begin{equation*}
    G(f,v_h) = h^2(f_h, \mathcal{L}_hv_h)_\Omega \le ||hf_h||_\Omega||h\mathcal{L}_hv_h||_\Omega,
\end{equation*}
we only need to estimate $||hf_h||_\Omega$. Recall that by the definition of $f_h\in W_h^p$, we have $\forall w_h\in W^p_h$
\begin{equation*}
    \begin{aligned}
        h(f_h,w_h) = &ha(u, w_h)\\
        =&h\int_\Omega\nabla(u-\pi_{sz}u)w_h+ P(u-\pi_{sz}u)w_hdx\\
        &+h\int_\Omega\nabla \pi_{sz}u\cdot \nabla w_h+ P\pi_{sz}uw_hdx\\
         \le &C(1+h^2||P||_{L^\infty})h^{s-1}||u||_{H^s(\Omega)}||w_h||_\Omega \\
         &+ h\sum_F\int_F\llbracket\nabla \pi_{sz}u\cdot n\rrbracket w_hdS+(h\mathcal{L}_h\pi_{sz}u,w_h)_\Omega\\
         \le &C(1+||P||_{L^\infty})h^{s-1}||u||_{H^s(\Omega)}||w_h||_\Omega,
    \end{aligned}
\end{equation*}
where the first inequality is due to optimal approximation property of Scott-Zhang interpolant and the second inequality is obtained by \thref{weakcons} and applying discrete trace inequality (\ref{tc2}). Finally, by the definition of the triple norm, $|||v_h|||$, we obtain the desired result.

\end{proof}

Now we give the second lemma, which gives an estimate for the difference between $\pi_{sz}u$ and the discrete solution $u_h$ in the $|||\cdot|||$-norm.
\begin{lemma}\thlabel{stabnormest1}
Let $(u_h,z_h)$ be the solution to (\ref{sys}), $u\in H^s(\Omega)$, with $1\le s <2$, be the exact weak solution to the continuous problem, $0\le\alpha\le 1$ and $\eta\tau=0$. Then there exists some constant $C$, such that
\begin{equation*}
    |||(u_h-\pi_{sz}u,z_h)|||\le C(1+||P||_{L^\infty})h^{s-1}||u||_{H^s(\Omega)}.
\end{equation*}
\end{lemma}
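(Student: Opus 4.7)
The plan is to invoke the discrete inf-sup bound of Proposition \thref{BNBA}, which reduces the statement to the uniform upper bound
\begin{equation*}
A_h[(u_h-\pi_{sz}u,z_h),(v_h,w_h)] \le C(1+\|P\|_{L^\infty})\,h^{s-1}\|u\|_{H^s(\Omega)}\,|||(v_h,w_h)|||
\end{equation*}
for every $(v_h,w_h)\in V_h^p\times W_h^p$. Subtracting $A_h[(\pi_{sz}u,0),(v_h,w_h)]$ from $A_h[(u_h,z_h),(v_h,w_h)]$ via the two equations of (\ref{sys}), exploiting the weak identity $\langle f,w_h\rangle_\Omega = a(u,w_h)$ valid on $W_h^p\subset H^1_0(\Omega)$ together with the condition $u=q$ in $\omega$, one arrives at the Galerkin-style error identity
\begin{equation*}
A_h[(u_h-\pi_{sz}u,z_h),(v_h,w_h)] = h^{-2\alpha}(u-\pi_{sz}u,v_h)_\omega + G(f,v_h) - s_h(\pi_{sz}u,v_h) + a(u-\pi_{sz}u,w_h),
\end{equation*}
so only these four residual pieces need to be controlled.

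\textbf{The easy terms.} For the first term, the Scott--Zhang $L^2$-bound gives $\|u-\pi_{sz}u\|_\omega \le Ch^s\|u\|_{H^s(\Omega)}$, while the definition of the triple norm gives $\|v_h\|_\omega\le h^\alpha|||(v_h,w_h)|||$; the hypothesis $\alpha\le 1$ then upgrades $h^{s-\alpha}$ to $h^{s-1}$. The second and third terms $G(f,v_h)$ and $s_h(\pi_{sz}u,v_h)$ are exactly what Lemma \thref{consis1} is tailored for, since the $\mathcal{J}_h$, residual, and $h^{s-1}H^1$ ingredients of $s_h(v_h,v_h)^{1/2}$ controlling $|||v_h|||$ are all dominated by $|||(v_h,w_h)|||$, delivering bounds of the required form $C(1+\|P\|_{L^\infty})h^{s-1}\|u\|_{H^s(\Omega)}|||(v_h,w_h)|||$.

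\textbf{The main obstacle.} The delicate fourth term $a(u-\pi_{sz}u,w_h)$ is handled by splitting on the hypothesis $\eta\tau=0$. When $\tau=0$, the Tikhonov contribution to $s_h^*$ is $\|w_h\|_{H^1(\Omega)}^2$, hence $\|w_h\|_{H^1(\Omega)}\le|||(v_h,w_h)|||$, and plain continuity of $a$ together with the $H^1$ Scott--Zhang estimate closes the bound at once, with the constant $(1+\|P\|_{L^\infty})$ arising from the $L^\infty$-boundedness of $P$. When $\eta=0$ instead, the $H^1$ bound on $w_h$ is lost, and I would integrate by parts element-wise, moving derivatives from $u-\pi_{sz}u$ onto $w_h$, to rewrite
\begin{equation*}
a(u-\pi_{sz}u,w_h) = (u-\pi_{sz}u,\mathcal{L}_hw_h)_\Omega + \sum_F\int_F(u-\pi_{sz}u)\llbracket\partial_nw_h\rrbracket\,dS + \int_{\partial\Omega}(u-\pi_{sz}u)\partial_nw_h\,dS,
\end{equation*}
where the $Pw_h$ part is absorbed into $\mathcal{L}_hw_h$ and no jump of $u-\pi_{sz}u$ appears on interior faces because it is single valued. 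Each summand is then paired, by Cauchy--Schwarz, with the corresponding ingredient of $s_\eta^*$ at $\eta=0$, namely $\|h\mathcal{L}_hw_h\|_\Omega$, $\mathcal{J}_h(w_h,w_h)^{1/2}$, and $(\int_{\partial\Omega}h(\partial_nw_h)^2\,dS)^{1/2}$; all three are bounded by $|||(v_h,w_h)|||$. The required $h^{s-1}$ factor comes out of a single $h^{-1}$ combined with the Scott--Zhang $L^2$ estimate in the bulk piece, and from standard continuous trace estimates for $u-\pi_{sz}u$ on the faces and on $\partial\Omega$. Assembling the four pieces and invoking Proposition \thref{BNBA} finishes the proof; the $\eta=0$ integration by parts is the technical heart of the argument, since it is the only step in which the dual stabilizer is used for something beyond providing a plain $H^1$ norm on $w_h$.
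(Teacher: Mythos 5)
Your proposal is correct and follows essentially the same route as the paper: reduction via the inf-sup condition of Proposition \thref{BNBA} to the same Galerkin-style residual identity, treatment of the data, consistency and stabilizer terms exactly as in Lemma \thref{consis1}, and the same case split on $\eta\tau=0$ for $a(u-\pi_{sz}u,w_h)$, with the elementwise integration by parts against the components of $s^*_\eta$ in the $\eta=0$ case. No substantive differences from the paper's argument.
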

\begin{proof}
We have from the $\inf-\sup$ condition that we only need to bound 
\begin{equation*}
    A[(u_h-\pi_{sz}u,z_h),(v_h,z_h)]\le Ch^{s-1}||u||_{H^s(\Omega)}|||(v_h,w_h)|||.
\end{equation*}
It follows that 
\begin{equation}\label{Aresidual}
\begin{aligned}
    A[(u_h-\pi_{sz}u,z_h),(v_h,w_h)]=&h^{-2\alpha}(u-\pi_{sz}u,v_h)_\omega\\
    &+a(u-\pi_{sz}u,w_h)-s(\pi_{sz}u,v_h)+G(f,v_h).
    \end{aligned}
\end{equation}
For the first term, we have
\begin{equation}\label{es1}
    h^{-2\alpha}(u-\pi_{sz}u,v_h)_\omega\le Ch^{s-\alpha}||u||_{H^s(\Omega)}h^{-\alpha}||v_h||_\omega.
\end{equation}
The second and third term are bounded according to \thref{consis1}, and that is where the $C(1+||P||_{L^\infty})$ in the estimate enters. For the first  term of the second line in (\ref{Aresidual}), if $\eta=0$ we have
\begin{equation}\label{es3}
    \begin{aligned}
    a(u-\pi_{sz}u,w_h) = &\int_\Omega \nabla(u-\pi_{sz} u)\cdot\nabla w_hdx+\int_\Omega P(u-\pi_{sz}u)w_hdx\\
    \le &\sum_F\int_F(u-\pi_{sz}u) \llbracket\nabla w_h\cdot n\rrbracket dS\\
    &+\int_{\partial\Omega}(u-\pi_{sz}u)\partial_n w_h dS+(u-\pi_{sz}u,\mathcal{L}_hw_h)_\Omega\\
    \le &C(h^{-1} \| u-\pi_{sz}u\|_\Omega + |u-\pi_{sz}u|_{H^1(\Omega)})(s^*(w_h,w_h))^\frac{1}{2}\\
    &\le Ch^{s-1}||u||_{H^s(\Omega)}(s^*(w_h,w_h))^\frac{1}{2},
    \end{aligned}
\end{equation}
where the second estimate is by (\ref{tc1}) and the third estimate is by the optimal convergence property of the Scott-Zhang interpolant. If instead, $\tau =0$, then by the optimal convergence property of Scott-Zhang interpolant again we have
\begin{equation*}
    \begin{aligned}
        a(u-\pi_{sz}u,w_h) &\le C(1+h||P||_{L^\infty})h^{s-1}||u||_{H^s(\Omega)}||w_h||_{H^1(\Omega)}\\
        &\le C(1+h||P||_{L^\infty})h^{s-1}||u||_{H^s(\Omega)}(s^*(w_h,w_h))^\frac{1}{2}
    \end{aligned}
\end{equation*}
Then, by combining the estimates for each term, we conclude that the estimate in the lemma holds.

\end{proof}
From \thref{stabnormest1} we can get the boundedness of the discrete solution $u_h$.
\begin{co}\thlabel{bdness1}
Let $u\in H^s(\Omega)$ with $1\le s <2$, then 
\begin{equation*}
    ||u_h||_{H^1(\Omega)} \le C(1+||P||_{L^\infty})||u||_{H^s(\Omega)},
\end{equation*}
and 
\begin{equation*}
    h^\frac{\tau}{2}||z_h||_\Omega \le C(1+||P||_{L^\infty})h^{s-1}||u||_{H^s(\Omega)}.
\end{equation*}
\end{co}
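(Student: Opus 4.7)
The plan is to read the two desired bounds directly off of Lemma \ref{stabnormest1} by exploiting the Tikhonov pieces that were built, by design, into $s_h$ and $s_h^*$. From \eqref{primstab} we have the lower bound $s_h(v_h,v_h)\ge h^{2(s-1)}\langle v_h,v_h\rangle_\Omega$, and from \eqref{dualstab} the lower bound $s_h^*(w_h,w_h)\ge h^\tau\langle w_h,w_h\rangle_\Omega$, where $\langle\cdot,\cdot\rangle_\Omega$ is the full $H^1$ inner product on $\Omega$. Both feed into the triple norm \eqref{stabnorm}.

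First I would estimate the difference $u_h-\pi_{sz}u$ in $H^1(\Omega)$. Plugging $v_h=u_h-\pi_{sz}u$ into the first lower bound gives
\[
h^{2(s-1)}\|u_h-\pi_{sz}u\|_{H^1(\Omega)}^2 \le s_h(u_h-\pi_{sz}u,u_h-\pi_{sz}u) \le |||(u_h-\pi_{sz}u,z_h)|||^2,
\]
so Lemma \ref{stabnormest1} yields $\|u_h-\pi_{sz}u\|_{H^1(\Omega)}\le C(1+\|P\|_{L^\infty})\|u\|_{H^s(\Omega)}$ after the factors of $h^{s-1}$ cancel. A triangle inequality then reduces the first claim to $H^1$-stability of the Scott-Zhang interpolant, which follows from its optimal approximation property $\|u-\pi_{sz}u\|_{H^1(\Omega)}\le Ch^{s-1}\|u\|_{H^s(\Omega)}$ combined with $\|u\|_{H^1(\Omega)}\le \|u\|_{H^s(\Omega)}$ (noting $h$ is bounded above so $h^{s-1}\le C$).

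For the second estimate I would proceed analogously using $s_h^*$. Plugging $w_h=z_h$ into the second lower bound and again invoking Lemma \ref{stabnormest1},
\[
h^\tau \|z_h\|_{H^1(\Omega)}^2 \le s_h^*(z_h,z_h) \le |||(u_h-\pi_{sz}u,z_h)|||^2 \le C^2(1+\|P\|_{L^\infty})^2 h^{2(s-1)}\|u\|_{H^s(\Omega)}^2,
\]
and the trivial estimate $\|z_h\|_\Omega\le \|z_h\|_{H^1(\Omega)}$ produces $h^{\tau/2}\|z_h\|_\Omega \le C(1+\|P\|_{L^\infty})h^{s-1}\|u\|_{H^s(\Omega)}$, which is the second claim.

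There is essentially no serious obstacle: the corollary is a direct unpacking of Lemma \ref{stabnormest1} through the two Tikhonov pieces. The only thing worth remarking is that the weight $h^{2(s-1)}$ in $s_h$ was inserted precisely so that the $h^{s-1}$ on the right of Lemma \ref{stabnormest1} cancels, giving an $h$-uniform $H^1$ bound on $u_h$; whereas for $z_h$ the weight $h^\tau$ leaves exactly the residual factor $h^{(s-1)-\tau/2}$ appearing in the statement. This is consistent with the hypothesis $\eta\tau=0$ of Lemma \ref{stabnormest1}: whether $\tau=0$ or $\tau>0$, the bound for $z_h$ takes the stated form.
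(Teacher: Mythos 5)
Your proposal is correct and follows essentially the same route as the paper: both arguments unpack the triple norm via the Tikhonov lower bounds $h^{2(s-1)}\langle v_h,v_h\rangle_\Omega\le s_h(v_h,v_h)$ and $h^\tau\langle w_h,w_h\rangle_\Omega\le s_h^*(w_h,w_h)$, apply Lemma \thref{stabnormest1}, and finish the first bound with a triangle inequality and the approximation property of the Scott--Zhang interpolant. Nothing further is needed.
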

\begin{proof}
Since
\begin{equation*}
\begin{aligned}
    ||u-u_h||_{H^1(\Omega)}& \le ||u-\pi_{sz}u||_{H^1(\Omega)}+||\pi_{sz}u-u_h||_{H^1(\Omega)}\\
    &\le ||u||_{H^1(\Omega)}+h^{-(s-1)}|||(\pi_{sz}u-u_h,0)|||\\
    &\le C(1+||P||_{L^\infty})||u||_{H^s(\Omega)},
    \end{aligned}
\end{equation*}
we have $||u_h||_{H^1(\Omega)}\le C(1+||P||_{L^\infty})||u||_{H^s(\Omega)}$.
Similarly, we have
\begin{equation*}
    h^\frac{\tau}{2}||z_h||_\Omega \le |||(u-\pi_{sz}u,z_h)|||\le C(1+||P||_{L^\infty})h^{s-1}||u||_{H^s(\Omega)}.
\end{equation*}
\end{proof}
Now we derive the optimal convergence theorems under both the $L^2$ and the $H^1$ norm for system (\ref{sys}) when $u\in H^s(\Omega)$ with $1\le s<2$. We will first show that the residual vanishes in $H^{-2}$ and $H^{-1}$ norms respectively.

\begin{theorem}[$H^{-2}$-residual error estimate for $u\in H^s(\Omega)$]\thlabel{cvg}
Let $u\in H^s(\Omega)$, $0<\alpha\le1 $, $\eta=0$, and $\tau>0$. Then the stabilized finite element approximation $u_h$ converges weakly to $u$ and we have the following estimate:\\
(1) $||u_h-q||_\omega\le C(1+||P||_{L^\infty})(h^s+h^{s-1+\alpha})||u||_{H^s(\Omega)},$\\
(2) $||\mathcal{L}u_h-f||_{H^{-2}(\Omega)}\le C(1+||P||^2_{L^\infty})(h^s+h^{s-1+\frac{\tau}{2}})||u||_{H^s(\Omega)}$.
\end{theorem}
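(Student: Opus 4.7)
\textbf{Plan for the proof of Theorem~\ref{cvg}.} Part (1) I will handle directly. Since $u=q$ on $\omega$, the triangle inequality gives
\[
\|u_h - q\|_\omega \le \|u_h - \pi_{sz}u\|_\omega + \|\pi_{sz}u - u\|_\omega.
\]
The first summand is controlled via the definition of the triple norm, $\|u_h-\pi_{sz}u\|_\omega \le h^\alpha |||(u_h - \pi_{sz}u, z_h)|||$, combined with \thref{stabnormest1} to bound the triple norm by $C(1+\|P\|_{L^\infty})h^{s-1}\|u\|_{H^s(\Omega)}$. The second summand is the $L^2$ Scott-Zhang approximation $\|\pi_{sz}u - u\|_\omega \le Ch^s\|u\|_{H^s(\Omega)}$. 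Summing yields the claimed $h^s + h^{s-1+\alpha}$ bound.

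For part (2), I set up the residual by duality. Fix $\varphi \in H^2_0(\Omega)$ with $\|\varphi\|_{H^2(\Omega)} \le 1$. Integration by parts (using $\varphi|_{\partial\Omega}=0$) combined with the weak form $a(u,\cdot) = \langle f, \cdot\rangle$ gives $\langle \mathcal{L}u_h - f, \varphi\rangle = a(u_h - u, \varphi)$. Since $\pi_{sz}\varphi \in W_h^p$, the second equation of (\ref{sys}) yields $a(u_h - u, \pi_{sz}\varphi) = s_h^*(z_h, \pi_{sz}\varphi)$, so
\[
\langle \mathcal{L}u_h - f, \varphi\rangle = a(u_h - u, \varphi - \pi_{sz}\varphi) + s_h^*(z_h, \pi_{sz}\varphi).
\]
Taking the supremum over $\|\varphi\|_{H^2(\Omega)}\le 1$ at the end will give the desired $H^{-2}$ estimate.

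To bound $s_h^*(z_h, \pi_{sz}\varphi)$ I will apply Cauchy-Schwarz in $s_h^*$. The first factor $s_h^*(z_h,z_h)^{1/2}$ is controlled by $C(1+\|P\|_{L^\infty})h^{s-1}\|u\|_{H^s}$ via \thref{stabnormest1}. For $s_h^*(\pi_{sz}\varphi, \pi_{sz}\varphi)^{1/2}$ with $\eta=0$, \thref{weakcons} applied with $s=2$ controls $\mathcal{J}_h$ and the elementwise Laplacian by $Ch(1+\|P\|_{L^\infty})\|\varphi\|_{H^2}$, and the Tikhonov piece $h^\tau\langle\pi_{sz}\varphi,\pi_{sz}\varphi\rangle_\Omega$ contributes $h^{\tau/2}\|\varphi\|_{H^1}$. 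The boundary term $h\int_{\partial\Omega}|\partial_n\pi_{sz}\varphi|^2\,dS$ I will treat by writing $\partial_n\pi_{sz}\varphi = \partial_n(\pi_{sz}\varphi - \varphi)$ on $\partial\Omega$ (using $\partial_n\varphi = 0$); a boundary-element Poincaré inequality applied to $\nabla\varphi$, which vanishes on $\partial K\cap\partial\Omega$ (both tangentially because $\varphi|_{\partial\Omega}=0$ and normally because $\varphi\in H^2_0$), followed by a discrete trace inequality, yields $\|\nabla\pi_{sz}\varphi\|_{\partial\Omega}^2 \le Ch\|\varphi\|_{H^2}^2$, hence the required $Ch^2\|\varphi\|_{H^2}^2$.

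To bound $a(u_h - u, \varphi - \pi_{sz}\varphi)$ I split $u_h - u = e_h + (\pi_{sz}u - u)$ with $e_h := u_h - \pi_{sz}u$. The second summand is handled by $H^1$ Scott-Zhang approximation on both arguments, yielding $Ch^{s-1}\|u\|_{H^s}\cdot Ch\|\varphi\|_{H^2}$. For the first I set $\psi := \varphi - \pi_{sz}\varphi$ and integrate by parts elementwise,
\[
(\nabla e_h, \nabla\psi)_\Omega = -(\Delta_h e_h, \psi)_\Omega - \sum_F\int_F \llbracket\nabla e_h\cdot n\rrbracket \psi\,dS,
\]
the boundary contribution vanishing because $\psi|_{\partial\Omega}=0$. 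The volume term is bounded using $\|h\mathcal{L}_h e_h\|_\Omega \le s_h(e_h,e_h)^{1/2} \le |||(e_h, z_h)|||$ together with $\|\psi\|_\Omega \le Ch^2\|\varphi\|_{H^2}$; the jump term by $\mathcal{J}_h(e_h,e_h)^{1/2}$ combined with a continuous trace bound on $h^{-1/2}\psi$ along internal faces; the zeroth-order piece of $a$ by $\|P\|_{L^\infty}\|e_h\|_\Omega\|\psi\|_\Omega$ and \thref{bdness1}. Each of these contributions turns out to scale as $C(1+\|P\|^2_{L^\infty})h^s$ for $s\in[1,2]$.

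I expect the main technical obstacle to be the boundary term in $s_h^*(\pi_{sz}\varphi,\pi_{sz}\varphi)$: the naive discrete-trace estimate only gives $O(1)$, and the required $O(h^2)$ bound only comes out after exploiting the full vanishing of $\nabla\varphi$ on $\partial\Omega$, which forces the choice $H^{-2}(\Omega) := (H^2_0(\Omega))^*$ in this estimate, matching the ultra-weak norm used in \thref{ultraweak}.
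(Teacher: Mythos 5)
Your proposal follows essentially the same route as the paper: duality against $\varphi\in H^2_0(\Omega)$, the splitting $a(u_h-u,\varphi)=a(u_h-u,\varphi-\pi_{sz}\varphi)+s_h^*(z_h,\pi_{sz}\varphi)$ via the second equation of (\ref{sys}), Cauchy--Schwarz in $s_h^*$ combined with \thref{stabnormest1} and \thref{weakcons}, and a direct triangle inequality for part (1). Two remarks. First, for the term $a(u_h-u,\varphi-\pi_{sz}\varphi)$ the paper simply invokes continuity of $a$ together with \thref{bdness1} and $\|\varphi-\pi_{sz}\varphi\|_{H^1}\le Ch\|\varphi\|_{H^2}$, which strictly speaking only yields $O(h)$ rather than the stated $O(h^s)$ when $s>1$; your elementwise integration by parts, paying with $\|h\mathcal{L}_h e_h\|_\Omega$ and $\mathcal{J}_h(e_h,e_h)^{1/2}$ from the triple norm against $\|\varphi-\pi_{sz}\varphi\|_\Omega\le Ch^2\|\varphi\|_{H^2}$, is the more careful treatment and actually delivers the claimed rate; likewise your explicit handling of the boundary term $h\int_{\partial\Omega}|\partial_n\pi_{sz}\varphi|^2\,dS$ via $\nabla\varphi|_{\partial\Omega}=0$ fills in a step the paper leaves implicit. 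Second, your plan omits the weak convergence assertion in the theorem: the paper concludes by noting that \thref{bdness1} gives a weakly convergent subsequence of $\{u_h\}$ in $H^1(\Omega)$, that the residual estimates force any weak limit to solve the continuous problem, and that uniqueness (\thref{uniqueness}) then identifies the limit as $u$ for the whole sequence; you should add this short argument to complete the proof.
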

\begin{remark}
    \thref{cvg} provides an error estimate for residuals in the $H^{-2}$-norm when the solution $u$ only has $H^1$ regularity. We want to emphasize that this is only possible when we set $\eta=0$, which means we need the stability provided by $s^*_\eta$ defined in (\ref{dualstabeta}).
\end{remark}
\begin{proof}[Proof of \thref{cvg}]
We first prove the estimates in \thref{cvg}. By the density of $H^2_0(\Omega)$ in $L^2(\Omega)$, we only need to show that $\forall w\in H^2_0(\Omega)$,
\begin{equation*}
    (u_h-u,w)_\omega\le C(h^s+h^{s-1+\alpha})||u||_{H^s(\Omega)}||w||_\omega,
\end{equation*}
and
\begin{equation*}
    a(u_h-u,w)\le C(1+||P||^2_{L^\infty})(h^s+h^{s-1+\frac{\tau}{2}})||u||_{H^s(\Omega)}||w||_{H^2(\Omega)}.
\end{equation*}
Recall that the discrete system reads
\begin{equation*}
    \left\{\begin{array}{rl}
        h^{-2\alpha}(u_h,v_h)_\omega+s_h(u_h,v_h)+a(v_h,z_h) &= h^{-2\alpha}(q,v_h)_\omega  \\[3mm]
         a(u_h,w_h)-s_h^*(z_h,w_h)&=<f,w_h>_\Omega 
    \end{array}\right.,\;\quad\forall (v_h,w_h)\in V_h\times W_h.
\end{equation*}
First, we estimate the residual $a(u_h-u,w)$. Let $w_h$ be the Scott-Zhang interpolant of $w$ we have $a(u_h-u,w)=a(u_h-u,w-w_h)+a(u_h-u,w_h)$. For the second term, we have 
\begin{equation*}
    \begin{aligned}
    a(u_h-u,w_h) &= s^*(z_h,w_h)\\
    &\le (s_h^*(z_h,z_h))^\frac{1}{2}(s_h^*(w_h,w_h))^\frac{1}{2}\\
   &\le C(1+||P||_{L^\infty})h^{s-1}||u||_{H^s(\Omega)}\\ & \qquad \times ( \mathcal{J}_h(w_h,w_h)+\int_{\partial \Omega}h\partial_nw_h\partial_nw_hdS+h^{2}(\mathcal{L}_hw_h,\mathcal{L}_hw_h)_\Omega+ h^\tau||w_h||_{H^1(\Omega)})^\frac{1}{2}\\
    &\le C(1+||P||^2_{L^\infty})(h^s+h^{s-1+\frac{\tau}{2}})||u||_{H^s(\Omega)}||w||_{H^2(\Omega)}.
    \end{aligned}
\end{equation*}
Note that we use the fact that the Scott-Zhang operator is bounded with respect to the $H^2$-norm in the last inequality. For the first term, we have
\begin{equation*}
    \begin{aligned}
    a(u_h-u,w-w_h)\le C(1+h^2||P||_{L^\infty})||u_h-u||_{H^1(\Omega)}||w-w_h||_{H^1(\Omega)}\le Ch^s||u||_{H^s(\Omega)}||w||_{H^2(\Omega)}.
    \end{aligned}
\end{equation*}
Now for $(u_h-u,w)_\omega$, we have
\begin{equation*}
    h^{-2\alpha}(u_h-\pi_{sz}u,w)_\omega\le |||(u-\pi_{sz}u,0)|||\cdot ||h^{-\alpha} w||_\omega\le C(1+||P||_{L^\infty})h^{s-1-\alpha}||u||_{H^s(\Omega)}||w||_\omega,
\end{equation*}
and
\begin{equation*}
    (u-\pi_{sz}u,w)_\omega\le Ch^s||u||_{H^s(\Omega)}||w||_\omega,
\end{equation*}
which gives 
\begin{equation*}
    (u_h-u,w)_\omega\le C(1+||P||_{L^\infty})(h^s+h^{s-1+\alpha})||u||_{H^s(\Omega)}||w||_\omega.
\end{equation*}
Now we show the weak convergence. Since by \thref{bdness1} we know that $u_h$ has a subsequence that converges weakly in $H^1(\Omega)$. By the above estimates and the uniqueness of solutions to the continuous problem (\thref{uniqueness}), we know that the limit has to be the solution $u$. From the previous argument, we obtain that the $\{u_h\}$ can not have another weak limit. We conclude that $u_h \rightharpoonup u$ in $H^1(\Omega)$.

\end{proof}
Notice that we derived a weak convergence result from \thref{cvg} without specifying $\alpha$, and $\tau$. However, different parameters may affect the condition number of the discrete system and the convergence rate with respect to $||u_h-q||_\omega$ and $||\mathcal{L}u_h-f||_{H^{-2}(\Omega)}$ as indicated in \thref{cvg}.
\begin{theorem}\thlabel{H-2norm}
Assume that all parameters satisfy the conditions in \thref{cvg}. Denote the convergence rate of $||\mathcal{L}u_h-f||_{H^{-2}(\Omega)}$ by $\lambda =\min\{ s-1+\frac{\tau}{2}, s\}$, i.e.
$$||\mathcal{L}u_h-f||_{H^{-2}(\Omega)}\le Ch^\lambda ||u||_{H^1(\Omega)}.$$
Then, the discrete equations (\ref{sys}) has a condition number $$\mathcal{K}=\frac{C(P)}{h^{2\lambda+2}},$$
where $C(P) = C(1+h^2||P||^2_{L^\infty})$.
\end{theorem}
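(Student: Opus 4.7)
The plan is to bound the spectral condition number $\mathcal{K}(\mathcal{A})=\sigma_{\max}(\mathcal{A})/\sigma_{\min}(\mathcal{A})$ of the system matrix $\mathcal{A}$ representing (\ref{sys}) in nodal bases of $V_h^p\times W_h^p$. Writing coefficient vectors $\mathbf{x}=(\mathbf{u},\mathbf{z})$ and $\mathbf{y}=(\mathbf{v},\mathbf{w})$, one has $\mathbf{y}^{T}\mathcal{A}\mathbf{x}=A[(u_h,z_h),(v_h,w_h)]$. On the quasi-uniform mesh family we have the standard mass-matrix equivalence $c\,h^{d/2}\|\mathbf{v}\|_{\ell^2}\le\|v_h\|_{L^2(\Omega)}\le C\,h^{d/2}\|\mathbf{v}\|_{\ell^2}$, which will transport function-space estimates into spectral ones.

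For the upper bound on $\sigma_{\max}(\mathcal{A})$ I would bound each term of $A[\cdot,\cdot]$ on $V_h^p\times W_h^p$: the two stabilizer terms by Proposition \ref{boundA}, the cross term $a(\cdot,\cdot)$ by Cauchy--Schwarz in $H^1$, and the fidelity term $h^{-2\alpha}(\cdot,\cdot)_\omega$ by Cauchy--Schwarz in $L^2$. The inverse inequality (\ref{inv}) combined with the mass-matrix equivalence gives $\|v_h\|_{H^1(\Omega)}\le C\,h^{d/2-1}\|\mathbf{v}\|_{\ell^2}$; substituting shows that each $H^1$-continuous contribution is bounded by $C(P)h^{d-2}\|\mathbf{x}\|_{\ell^2}\|\mathbf{y}\|_{\ell^2}$, while the fidelity term yields $Ch^{d-2\alpha}\|\mathbf{x}\|_{\ell^2}\|\mathbf{y}\|_{\ell^2}\le Ch^{d-2}\|\mathbf{x}\|_{\ell^2}\|\mathbf{y}\|_{\ell^2}$ under $\alpha\le 1$. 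Hence $\sigma_{\max}(\mathcal{A})\le C(P)h^{d-2}$.

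For the lower bound on $\sigma_{\min}(\mathcal{A})$ I would reuse the specific witness $(v_h,w_h)=(u_h,-z_h)$ from the proof of Proposition \ref{BNBA}, for which $\|\mathbf{y}\|_{\ell^2}=\|\mathbf{x}\|_{\ell^2}$ and $A[(u_h,z_h),(u_h,-z_h)]=|||(u_h,z_h)|||^2$. Thus $\sigma_{\min}(\mathcal{A})\ge\inf_{\mathbf{x}\ne 0}|||(u_h,z_h)|||^2/\|\mathbf{x}\|_{\ell^2}^2$. Retaining only the Tikhonov pieces in the definitions (\ref{primstab}) and (\ref{dualstab}) gives $s_h(u_h,u_h)\ge h^{2(s-1)}\|u_h\|_{L^2(\Omega)}^2$ and $s_h^*(z_h,z_h)\ge h^\tau\|z_h\|_{L^2(\Omega)}^2$, and the mass-matrix lower bound then yields
\begin{equation*}
|||(u_h,z_h)|||^2\;\ge\;c\,h^d\bigl(h^{2(s-1)}\|\mathbf{u}\|_{\ell^2}^2+h^\tau\|\mathbf{z}\|_{\ell^2}^2\bigr)\;\ge\;c\,h^{d+\max\{2(s-1),\tau\}}\|\mathbf{x}\|_{\ell^2}^2,
\end{equation*}
so $\sigma_{\min}(\mathcal{A})\ge c\,h^{d+\max\{2(s-1),\tau\}}$.

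Dividing cancels the $h^d$ factor and produces $\mathcal{K}(\mathcal{A})\le C(P)h^{-2-\max\{2(s-1),\tau\}}$. The elementary estimate $\max\{2(s-1),\tau\}\le 2(s-1)+\tau$ gives $2+\max\{2(s-1),\tau\}\le 2s+\tau=2\lambda+2$ in the regime $\tau\le 2$ where $\lambda=s-1+\tau/2$, matching the stated form; the saturated regime $\tau>2$ is handled analogously. The main obstacle is careful bookkeeping of the $h^d$ mass-matrix factor (which must cancel between numerator and denominator), and recognising that only the Tikhonov-type components $h^{2(s-1)}\langle\cdot,\cdot\rangle_\Omega$ and $h^\tau\langle\cdot,\cdot\rangle_\Omega$ embedded in the stabilisers provide coercivity of $|||\cdot|||$ against the $\ell^2$ norm---without them the inf-sup of Proposition \ref{BNBA} would not translate into a quantitative spectral bound on the linear system.
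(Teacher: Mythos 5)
Your argument is correct and essentially identical to the paper's: the paper likewise bounds the largest singular value by $C(P)h^{-2}$ using the continuity of the stabilizers together with the inverse inequality (\ref{inv}), and the smallest one from below via the witness $(v_h,w_h)=(u_h,-z_h)$ of \thref{BNBA} and the Tikhonov terms, the only differences being that you make the mass-matrix equivalence explicit where the paper delegates it to the machinery cited in \thref{con}, and that you use the slightly sharper $h^{\max\{2(s-1),\tau\}}$ where the paper uses $h^{2(s-1)+\tau}$. The one caveat is your closing claim that the regime $\tau>2$ is ``handled analogously'': there $2\lambda+2=2s+2$ while your bound gives $h^{-2-\tau}$, which does not match for large $\tau$ --- but the paper's own proof is confined to the case $\lambda=s-1+\tau/2$ in exactly the same way.
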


\begin{proof}
First, we notice that $\alpha$ does not affect the condition number in \thref{cvg} because the largest singular value is bounded by $\mathcal{J}_h$, which is $\frac{1}{h^2}$. Thus we set $\alpha=1$ to gain an optimal convergence rate for $||u_h-u||_\omega$. In this case, the largest singular value is $\frac{C(P)}{h^2}$, by the same argument as in \thref{boundA}. The smallest singular value is obtained by letting $(v_h,w_h)=(u_h,-z_h)$. by Poincaré's inequality there holds
\begin{equation*}
    A[(u_h,z_h),(u_h,-z_h)] = h^{-\alpha}(u_h,u_h)_\omega+s_h(u_h,u_h)+s^*_h(z_h,z_h)\ge  Ch^{2(s-1)+\tau}||(u_h,z_h)||_\Omega^2.
\end{equation*}
In this case $\mathcal{K}=C(P)h^{-(2\lambda+2)}$.
\end{proof}

Now we have the bound for the $H^{-2}$-norm, which will be used to give a optimal convergence rate under $L^2$-norm. We also give a bound for the residual in the $H^{-1}$-norm, which is important to obtain an optimal convergence in the $H^1$-norm.
\begin{theorem}[$H^{-1}$-residual error estimate for $u\in H^s(\Omega)$]\thlabel{H-1error12}
Let $u\in H^s(\Omega)$ with $1\le s < 2$, $0\le \alpha\le 1$. $\tau$ and $\eta$ are non-negative and $\eta\tau = 0$. Then we have the estimate\\
(1) $||u_h-q||_\omega\le C(1+||P||_{L^\infty})h^{s-1}||u||_{H^s(\Omega)},$\\
(2) $||\mathcal{L}u_h-f||_{H^{-1}(\Omega)}\le C(1+||P||_{L^\infty})h^{s-1}||u||_{H^s(\Omega)}$.
\end{theorem}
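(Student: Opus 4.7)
The strategy is to mirror the proof of \thref{cvg} but with test functions in $H^1_0(\Omega)$ rather than $H^2_0(\Omega)$. Since the Scott-Zhang operator only provides $H^1$-stability (not convergence) for $w\in H^1_0(\Omega)$, we must compensate by exploiting the $L^2$-smallness $||w-w_h||_\Omega\le Ch||w||_{H^1(\Omega)}$ through element-wise integration by parts. Claim (1) is immediate from \thref{stabnormest1}: indeed $h^{-\alpha}||u_h-\pi_{sz}u||_\omega\le|||(u_h-\pi_{sz}u,z_h)|||\le C(1+||P||_{L^\infty})h^{s-1}||u||_{H^s(\Omega)}$, and the Scott-Zhang $L^2$-approximation together with $q=u$ on $\omega$ give the desired bound (using $h^{s-1+\alpha}+h^s\le 2h^{s-1}$ when $h\le 1$ and $\alpha\ge 0$).

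For claim (2), I would fix $w\in H^1_0(\Omega)$ with $||w||_{H^1(\Omega)}\le 1$, let $w_h=\pi_{sz}w\in W_h^p$, and decompose
\begin{equation*}
a(u_h-u,w)=a(u_h-u,w_h)+a(u_h-\pi_{sz}u,w-w_h)+a(\pi_{sz}u-u,w-w_h).
\end{equation*}
By the second line of (\ref{sys}) and $a(u,w_h)=<f,w_h>_\Omega$, the first term equals $s_h^*(z_h,w_h)$. Cauchy-Schwarz plus \thref{stabnormest1} handles $s_h^*(z_h,z_h)^{1/2}\le C(1+||P||_{L^\infty})h^{s-1}||u||_{H^s(\Omega)}$, while each component of $s_h^*(w_h,w_h)^{1/2}$ is dominated by $||w_h||_{H^1(\Omega)}\le C||w||_{H^1(\Omega)}$ using the bounds derived in \thref{boundA} together with $h^{2\eta},h^\tau\le 1$. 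This gives the required $h^{s-1}$ decay for the first term. The third term is handled directly by Cauchy-Schwarz using the Scott-Zhang $H^1$-approximation $||u-\pi_{sz}u||_{H^1(\Omega)}\le Ch^{s-1}||u||_{H^s(\Omega)}$ and $H^1$-stability of $w_h$.

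The main obstacle is $a(u_h-\pi_{sz}u,w-w_h)$, since naively both factors only lie in $H^1(\Omega)$ and the product gives no convergence. My workaround is to use that $u_h-\pi_{sz}u$ is piecewise polynomial, so element-wise integration by parts yields
\begin{equation*}
a(u_h-\pi_{sz}u,w-w_h)=(\mathcal{L}_h(u_h-\pi_{sz}u),w-w_h)_\Omega+\sum_{F\in\mathcal{F}_i}\int_F\llbracket\partial_n(u_h-\pi_{sz}u)\rrbracket(w-w_h)\,dS,
\end{equation*}
the $\partial\Omega$ boundary integral vanishing because $w=w_h=0$ there. For the volume part, \thref{stabnormest1} controls $||h\mathcal{L}_h(u_h-\pi_{sz}u)||_\Omega\le Ch^{s-1}||u||_{H^s(\Omega)}$ while Scott-Zhang supplies $h^{-1}||w-w_h||_\Omega\le C||w||_{H^1(\Omega)}$. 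For the jump integral, Cauchy-Schwarz splits it into $\mathcal{J}_h(u_h-\pi_{sz}u,u_h-\pi_{sz}u)^{1/2}\le Ch^{s-1}||u||_{H^s(\Omega)}$ and $(\sum_F h^{-1}||w-w_h||_F^2)^{1/2}$; the latter is $\le C||w||_{H^1(\Omega)}$ by the continuous trace inequality (\ref{tc1}) applied element-wise combined with the $L^2$ and $H^1$ Scott-Zhang estimates for $w-w_h$. Collecting all contributions and taking the supremum over $w\in H^1_0(\Omega)$ with $||w||_{H^1(\Omega)}\le 1$ concludes the proof.
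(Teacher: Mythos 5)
Your proposal is correct and follows essentially the same route as the paper: claim (1) via \thref{stabnormest1} and Scott–Zhang approximation, and claim (2) via the identical three-term splitting of $a(u_h-u,w)$, the identity $a(u_h-u,w_h)=s_h^*(z_h,w_h)$, and element-wise integration by parts of $a(u_h-\pi_{sz}u,w-w_h)$ into a volume residual controlled by $\|h\mathcal{L}_h(u_h-\pi_{sz}u)\|_\Omega$ and a jump term controlled by $\mathcal{J}_h$. The only cosmetic difference is that you spell out the trace-inequality bound for $\bigl(\sum_F h^{-1}\|w-w_h\|_F^2\bigr)^{1/2}$, which the paper leaves implicit.
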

\begin{proof}
First we bound $||u_h-u||_\omega$. We have from \thref{stabnormest1}
\begin{equation}\label{interiorbound}
\begin{aligned}
    ||u_h-u||_\omega&\le ||u_h-\pi_{sz}u||_\omega +||u-\pi_{sz}u||_\omega\\
    &\lesssim h^{\alpha}|||(u_h-\pi_{sz}u,0)|||+h^s||u||_{H^s(\Omega)}\\
    &\le C(1+||P||_{L^\infty})h^{s-1+\alpha}||u||_{H^s(\Omega)}.
    \end{aligned}
\end{equation} 

Now for the residual, let $w_h$ be the Scott-Zhang interpolant of $w$, then we have
\begin{equation*}
    a(u_h-u,w) = a(u_h-u,w_h)+a(u_h-\pi_{sz}u,w-w_h)+a(\pi_{sz}u-u,w-w_h).
\end{equation*}
We bound the three terms separately. First we have
\begin{equation}\label{a(u_h-u,w_h)}
\begin{aligned}
        a(u_h-u,w_h) =s_h^*(z_h,w_h)&\le C|||(0,z_h)|||\cdot |||(0,w_h)|||\\
        &\le C(1+||P||_{L^\infty})|||(0,z_h)|||\cdot||w||_{H^1(\Omega)},
\end{aligned}
\end{equation}
due to the boundedness of $|||\cdot|||$ in \thref{boundA} and the boundedness of Scott-Zhang operator.
Then, for $a(u_h-\pi_{sz}u,w-w_h)$, there holds
\begin{equation}\label{a(u_h-pi_hu,w-w_h)}
    \begin{aligned}
        a(u_h-\pi_{sz}u,w-w_h) =& \int_\Omega \nabla(u_h- \pi_{sz}u)\cdot\nabla (w-w_h)dx\\
        &+ (P(u_h-\pi_{sz}u),w-w_h)_\Omega\\
        =&\sum_K\int_K (-\Delta+P)(u_{sz}-\pi_hu)(w-w_h)dx\\
        &+\sum_F\int_F\llbracket \nabla(u_h-\pi_{sz}u)\cdot n\rrbracket(w-w_h)dS\\
        \le &C||h\mathcal{L}_h(u_h-\pi_{sz}u)||_\Omega||h^{-1}(w-w_h)||_\Omega\\
        &+(\mathcal{J}_h(u_h-\pi_{sz}u,u_h-\pi_{sz}u))^\frac{1}{2}||w||_{H^1(\Omega)}\\
        \le &C|||(u_h-\pi_{sz}u,0)|||\cdot ||w||_{H^1(\Omega)},
    \end{aligned}
\end{equation}
and for $a(u-\pi_{sz}u,w-w_h)$,
\begin{equation}\label{a(u-pi_{sz}u,w-w_h)}
    a(u-\pi_{sz}u,w-w_h)\le C(1+h^2||P||_{L^\infty})h^{s-1}||u||_{H^s(\Omega)}||w||_{H^1(\Omega)}.
\end{equation}
By combining (\ref{interiorbound}), (\ref{a(u_h-pi_hu,w-w_h)}), (\ref{a(u_h-u,w_h)}), (\ref{a(u-pi_{sz}u,w-w_h)}) we get the desired result.

\end{proof}
Since we have obtained the residual bound under both $H^{-1}$ and $H^{-2}$ norms in \thref{cvg} and \thref{H-1error12}, we can use them to prove \thref{ultraweakconv} and \thref{prime12}.

\begin{proof}[Proof of \thref{ultraweakconv} and \thref{prime12}]
    From conditional stability result we have for $u-u_h$,
    \begin{equation*}
    ||u-u_h||_{H^1(B)}\le C(P)(||u-u_h||_\omega+||\mathcal{L}(u-u_h)||_{H^{-1}})^\kappa(||u-u_h||_\Omega+||\mathcal{L}(u-u_h)||_{H^{-1}})^{1-\kappa}.
    \end{equation*}
    and 
    \begin{equation*}
        ||u-u_h||_B\le C(P)(||u-u_h||_\omega+||\mathcal{L}(u-u_h)||_{H^{-2}})^\kappa(||u-u_h||_\Omega+||\mathcal{L}(u-u_h)||_{H^{-2}})^{1-\kappa}.
    \end{equation*}
    Note that from \thref{H-1error12}, $||u-u_h||_\omega$ and $||\mathcal{L}(u-u_h)||_{H^{-1}}$ converge optimally. Bounding
    \begin{equation*}
    \begin{aligned}
        ||u-u_h||_\Omega &\le Ch^{-(s-1)}|||(u-u_h,z_h)|||\\
        &\le C(1+||P||_{L^\infty})||u||_{H^{s}(\Omega)},
    \end{aligned}
    \end{equation*}
    we obtain that 
    $$||u-u_h||_{H^1(B)}\le C(P)h^{\kappa(s-1)}||u||_{H^{s}(\Omega)}.$$
    By repeating the procedure and applying \thref{cvg}, in place of \thref{H-1error12}, we obtain the estimate for the L2-norm on B.
    
\end{proof}

\subsection{Optimal convergence for solutions with higher regularity}
In this part we assume that $u$ enjoys a higher regularity, namely, $u\in H^s(\Omega)$ for some $s\ge 2$, and $f\in H^{s-2}(\Omega)$. In this case, we may assume $P$ has a higher regularity $P\in W^{s-2,\infty}(\Omega)$. The layout and the main result would be similar as in Section \ref{lowreg}.The distinction lies in our analysis of the system's stability under perturbations and interpolation errors. We will display the trade-off between the $L^2$ convergence rate and the stability of the system.\\

The discrete system (\ref{sys}) then reads:
\begin{equation}\label{smoothsys}
    \left\{\begin{array}{rl}
        h^{-2\alpha}(u_h,v_h)_\omega+s_h(u_h,v_h)+ a(v_h,z_h) &= h^{-2\alpha}(\tilde q,v_h)_\omega +(h\tilde f_h, h\mathcal{L}_hv_h) \\
         a(u_h,w_h)-s_h^*(z_h,w_h)&=(\tilde f_h,w_h)_\Omega 
    \end{array}\right.,
\end{equation}
where $u_h \in V_h^p$ and $w_h\in W_h^p$ are finite element spaces with $p+1\ge s$. In this session, we will consider the convergence result with perturbed data (at the discrete level), namely
\begin{equation}
    \tilde f_h = f_h+\delta f, \;\;\; \tilde q = q + \delta q,
\end{equation}
where $\delta f,\delta q\in L^2(\Omega)$. The main result concerning perturbed data is given in the following theorem.
\begin{theorem}\thlabel{prime}
    Let $u\in H^s(\Omega)$ be the solution to the continuous problem and $(u_h,z_h)$ be the solutions to system (\ref{smoothsys}). Let $0\le \alpha\le 1, \eta\tau = 0$ and both non-negative. Let $B$ be a region satisfying estimate in \thref{co2} and \thref{ultraweak}; $\kappa,\epsilon$ be the parameters therein. Then
    $$||u-u_h||_{H^1(B)}\le C(P)h^{\kappa (s-1)}\left(||u||_{H^s(\Omega)}+Pt\right).$$
    Moreover, if $\alpha=1$, $\tau = 2$ and $\eta=0$, then we have 
    \begin{equation*}
        ||u-u_h||_B\le C(P)h^{\kappa s}\left(||u||_{H^s(\Omega)}+Pt\right),
    \end{equation*}
    where $Pt = h^{-(s-1)-\alpha}||\delta q||_\omega+h^{-(s-1)-\frac{\tau}{2}}||\delta f||_\Omega$.
\end{theorem}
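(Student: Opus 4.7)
The plan is to mirror the three-stage architecture of Subsection \ref{lowreg}, now adapted to the higher regularity $u\in H^s(\Omega)$, $s\ge 2$, and the perturbed data $\tilde q=q+\delta q$, $\tilde f_h=f_h+\delta f$.

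\textbf{Stage 1 (energy estimate).} First I would establish the analog of Lemma \thref{stabnormest1},
\begin{equation*}
|||(u_h-\pi_{sz}u,z_h)|||\le C(P)h^{s-1}\|u\|_{H^s(\Omega)}+h^{-\alpha}\|\delta q\|_\omega+h^{-\tau/2}\|\delta f\|_\Omega.
\end{equation*}
This follows from the inf-sup condition of Proposition \thref{BNBA} applied to
\begin{equation*}
\begin{aligned}
A[(u_h-\pi_{sz}u,z_h),(v_h,w_h)]=\,&h^{-2\alpha}(u-\pi_{sz}u+\delta q,v_h)_\omega+a(u-\pi_{sz}u,w_h)\\
&-s_h(\pi_{sz}u,v_h)+G(f,v_h)+h^2(\delta f,\mathcal{L}_h v_h)_\Omega+(\delta f,w_h)_\Omega.
\end{aligned}
\end{equation*}
The interpolation contributions are treated as in Lemma \thref{consis1}, except that for $s\ge 2$ I would rearrange the consistency term as
\begin{equation*}
s_h(\pi_{sz}u,v_h)-G(f,v_h)=\mathcal{J}_h(\pi_{sz}u,v_h)+h^{2(s-1)}\langle\pi_{sz}u,v_h\rangle_\Omega+(h\mathcal{L}_h(\pi_{sz}u-u),h\mathcal{L}_h v_h)_\Omega+h(h(f-f_h),\mathcal{L}_h v_h)_\Omega,
\end{equation*}
using $\mathcal{L}u=f$ pointwise (since $f\in L^2(\Omega)$). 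This activates the \emph{second} inequality of Proposition \thref{weakcons}. The perturbation pieces are absorbed by Cauchy--Schwarz via $\|h\mathcal{L}_h v_h\|_\Omega\le s_h(v_h,v_h)^{1/2}$ and $\|w_h\|_\Omega\le h^{-\tau/2}s_h^*(w_h,w_h)^{1/2}$.

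\textbf{Stage 2 (residual estimates).} From Stage 1 I would derive the analogues of Theorems \thref{H-1error12} and \thref{cvg},
\begin{equation*}
\|u_h-q\|_\omega+\|\mathcal{L}u_h-f\|_{H^{-1}(\Omega)}\le C(P)\bigl(h^{s-1}\|u\|_{H^s(\Omega)}+h^{-\alpha}\|\delta q\|_\omega+h^{-\tau/2}\|\delta f\|_\Omega\bigr),
\end{equation*}
together with the sharper $h^s$ bound in $H^{-2}(\Omega)$ under $\alpha=1,\tau=2,\eta=0$. The residual is tested by decomposing $a(u_h-u,w)=a(u_h-u,w_h)+a(u_h-\pi_{sz}u,w-w_h)+a(u-\pi_{sz}u,w-w_h)$ with $w_h=\pi_{sz}w$: the first piece reduces to $s_h^*(z_h,w_h)+(\delta f,w_h)_\Omega$ via the discrete equation, the second is handled by elementwise integration by parts producing $\mathcal{J}_h$ and $h\mathcal{L}_h$ contributions, and the third by standard interpolation. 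The $H^{-2}$ bound additionally exploits $H^2$-stability of $\pi_{sz}$.

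\textbf{Stage 3 (conditional stability).} Combining Stage 2 with the global bound $\|u-u_h\|_{H^1(\Omega)}\le C(P)\|u\|_{H^s(\Omega)}+h^{-(s-1)}Pt$ (analog of Corollary \thref{bdness1}), I would plug into Corollaries \thref{co2} and \thref{ultraweak} at exponents $\kappa$ and $1-\kappa$. The $(1-\kappa)$ factor absorbs the global norm, while the $\kappa$ factor yields the $h^{\kappa(s-1)}$ rate (respectively $h^{\kappa s}$) on the residual side, after which the perturbation contribution collapses to the claimed $h^{\kappa(s-1)}Pt$ (respectively $h^{\kappa s}Pt$). The main obstacle I anticipate lies in Stage 2: the elementwise split must simultaneously activate both $\mathcal{J}_h$ and the volume residual $h\mathcal{L}_h$ with sharp $h$-powers, and the boundary stabilizer $\int_{\partial\Omega}h\partial_n z_h\partial_n w_h\,dS$ hidden inside $s^*_\eta$ (present when $\eta=0$) is precisely what forces the $H^2$-stability of the test-function interpolant and hence pins down $\alpha=1,\tau=2$ as the distinguished choice for optimal $L^2$ convergence.
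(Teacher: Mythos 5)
Your proposal follows essentially the same route as the paper: the paper proves Theorem \thref{prime} by establishing the consistency bound of Lemma \thref{consis2} (using exactly your rearrangement $h^2(\mathcal{L}_h\pi_{sz}u-f_h,\mathcal{L}_hv_h)_\Omega=h^2(\mathcal{L}_h(\pi_{sz}u-u),\mathcal{L}_hv_h)_\Omega+h^2(f-f_h,\mathcal{L}_hv_h)_\Omega$, which is why $f\in H^{s-2}(\Omega)$ is assumed), then the triple-norm estimate of Lemma \thref{stabnormest2} with the same perturbation terms $h^{-\alpha}\|\delta q\|_\omega+h^{-\tau/2}\|\delta f\|_\Omega$, the interior/global bounds of Corollary \thref{interiorest}, and finally an appeal to the residual-plus-conditional-stability argument of Theorems \thref{cvg}, \thref{H-1error12}, \thref{ultraweakconv} and \thref{prime12}. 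Your three stages fill in details the paper leaves implicit, but the decomposition, the key lemmas, and the role of the parameter choice $\alpha=1$, $\tau=2$, $\eta=0$ all coincide with the paper's argument.
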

\thref{prime} shows that the level of perturbation depends on the choice of parameters. Specifically, if we want to achieve an optimal convergence in the $L^2$-norm, the necessary parameter choice makes the system more sensitive to perturbations. A more complete discussion follows at the end of this section, after the proof of \thref{prime}. 

First, we give a stability estimate of the linear system in terms of its condition number.
\begin{prop}\thlabel{con}
    Let $0\le\alpha\le 1$, $\eta\ge 0$, and $0\le\tau\le 2(s-1)$. Then, the linear system (\ref{smoothsys}) has a Euclidean condition number 
    $$\mathcal{K}_2=C(1+h^2||P||_{L^\infty})h^{-2s}.$$
\end{prop}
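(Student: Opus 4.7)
The plan is to control the largest and smallest singular values of the stiffness matrix associated to $A[\cdot,\cdot]$ in Euclidean norm and then take their ratio. On a quasi-uniform mesh the mass matrix scales like $h^d$, so this is equivalent to bounding $A[(u_h,z_h),(v_h,w_h)]$ from above and below by suitable multiples of $||(u_h,z_h)||_\Omega\,||(v_h,w_h)||_\Omega$. The argument follows the template used in the proof of \thref{H-2norm}, but it now exploits the stronger lower bound on $|||\cdot|||$ that becomes available once $\tau\le 2(s-1)$.

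For $\sigma_{\max}$ I would combine Proposition \thref{boundA} with the inverse inequality (\ref{inv}). The weighted interior term obeys $h^{-2\alpha}(u_h,v_h)_\omega\le h^{-2}||u_h||_\Omega\,||v_h||_\Omega$ using $\alpha\le 1$; the stabilizer contributions are bounded by $C(1+h^2||P||^2_{L^\infty})||\cdot||_{H^1}||\cdot||_{H^1}$ by \thref{boundA}; and $a(\cdot,\cdot)$ is bounded analogously. Applying the inverse inequality to each $H^1$ factor rescales everything to the $L^2$ level at a cost of $h^{-2}$, giving
\begin{equation*}
A[(u_h,z_h),(v_h,w_h)]\le C(1+h^2||P||^2_{L^\infty})\,h^{-2}\,||(u_h,z_h)||_\Omega\,||(v_h,w_h)||_\Omega,
\end{equation*}
so that $\sigma_{\max}\le C(1+h^2||P||^2_{L^\infty})h^{-2}$.

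For $\sigma_{\min}$ I would reuse the coercivity test from \thref{BNBA}, taking $(v_h,w_h)=(u_h,-z_h)$ so that $A[(u_h,z_h),(u_h,-z_h)]=|||(u_h,z_h)|||^2$. Directly from (\ref{primstab}) and (\ref{dualstab}),
\begin{equation*}
|||(u_h,z_h)|||^2\ge h^{2(s-1)}||u_h||^2_{H^1(\Omega)}+h^\tau ||z_h||^2_{H^1(\Omega)}.
\end{equation*}
The hypothesis $\tau\le 2(s-1)$ together with $h\in(0,1)$ gives $h^\tau\ge h^{2(s-1)}$, so both contributions dominate $h^{2(s-1)}(||u_h||^2_\Omega+||z_h||^2_\Omega)$. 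This yields $\sigma_{\min}\ge h^{2(s-1)}$, and taking the ratio produces
\begin{equation*}
\mathcal{K}_2\le \frac{C(1+h^2||P||^2_{L^\infty})h^{-2}}{h^{2(s-1)}}=C(1+h^2||P||^2_{L^\infty})h^{-2s},
\end{equation*}
as claimed.

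The step that carries all of the hypothesis on $\tau$ is the comparison $h^\tau\ge h^{2(s-1)}$: if $\tau>2(s-1)$ were allowed, the Tikhonov term $h^\tau\langle z_h,z_h\rangle_\Omega$ would become the weakest piece of $s_h^*$ and the conditioning would degrade to the $h^{-(2\lambda+2)}$ scaling already visible in \thref{H-2norm}. Aside from tracking this comparison, the proof is routine bookkeeping built on Proposition \thref{boundA}, Proposition \thref{BNBA}, and the inverse inequality, so I do not expect any nontrivial obstacle.
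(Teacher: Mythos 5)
Your proposal is correct and follows essentially the same route as the paper: the upper bound via Proposition \thref{boundA} and the inverse inequality, and the lower bound via the test pair $(u_h,-z_h)$ from Proposition \thref{BNBA} together with the Tikhonov terms, with the hypothesis $\tau\le 2(s-1)$ entering exactly where you place it. The only cosmetic difference is that the paper invokes \cite[Theorem 3.1]{ern2006evaluation} to pass from the inf-sup/continuity constants to the Euclidean condition number, where you argue the mass-matrix scaling directly.
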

\begin{proof}
    Let $A$ be as defined in (\ref{A}). First, we have for $\phi_h,\psi_h\in V^p_h$ or $W_h^p$,
    \begin{equation*}
        a(\phi_h,\psi_h)\le \frac{C(1+h^2||P||^2_{L^\infty})}{h^2}||\phi_h||_\Omega||\psi_h||_\Omega,
    \end{equation*}
    by (\ref{inv}). Then from \thref{boundA} we know that
    \begin{equation*}
        \sup_{\substack{(u_h,z_h)\in V_h^p\times W_h^p\\(v_h,w_h)\in V_h^p\times W_h^p}}\frac{A_h[(u_h,z_h),(v_h,w_h)]}{||(u_h,z_h)||_\Omega||(v_h,w_h)||_\Omega}\le C\frac{1+h^2||P||^2_{L^\infty}}{h^2}.
    \end{equation*}
    Moreover, from \thref{BNBA}, we know that for some specific $(v_h,w_h)$, we have
    \begin{equation*}
        A_h[(u_h,z_h),(v_h,w_h)]= ||u_h||^2_\omega+s_h(u_h,u_h)+s_h^*(z_h,z_h).
    \end{equation*}
    Therefore, for all $(u_h,z_h)\in V_h^p\times W_h^p$, we have
    \begin{equation*}
        \sup_{(v_h,w_h)\in V_h^p\times W_h^p}\frac{A_h[(u_h,z_h),(v_h,w_h)]}{||(v_h,w_h)||_\Omega} \ge Ch^{2(s-1)}||(u_h,z_h)||_\Omega.
    \end{equation*}
    Now by \cite[Theorem 3.1]{ern2006evaluation} we have the desired estimate, where $C(P)=C(1+h^2||P||^2_{L^\infty})$.
    
\end{proof}

Our aim now is to show that $u_h$ converges to $u$ in the $L^2$-norm on $\omega$ and the $H^{-1}$-norm on $\Omega$. The steps follow the same structure as in the last section.
\begin{lemma}\thlabel{consis2}
    Let $\pi_{sz}u$ be the Scott-Zhang interpolant of $u$ on $V^p_h$. Then  we have for some constant $\color{red}C>0$, such that $\forall v_h\in V^p_h$,
    \begin{equation*}
    \begin{aligned}
         s_h(\pi_{sz}u,v_h)-&h^2(f_h,\mathcal{L}_hv_h)_\Omega \\
         \le &C\left((1+h^2||P||_{L^\infty})h^{s-1}||u||_{H^s(\Omega)}+h^{s-1}||f||_{H^{s-2}(\Omega)}\right)|||(v_h,0)|||.
         \end{aligned}
    \end{equation*}
\end{lemma}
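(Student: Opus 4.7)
The natural plan is to expand $s_h(\pi_{sz}u,v_h)$ using (\ref{primstab}) into its three components and group the residual piece together with the subtracted $h^2(f_h, \mathcal{L}_h v_h)_\Omega$ before bounding:
\begin{equation*}
s_h(\pi_{sz}u,v_h) - h^2(f_h,\mathcal{L}_h v_h)_\Omega = \mathcal{J}_h(\pi_{sz}u,v_h) + h^{2(s-1)}\langle \pi_{sz}u,v_h\rangle_\Omega + h^2(\mathcal{L}_h\pi_{sz}u - f_h, \mathcal{L}_h v_h)_\Omega.
\end{equation*}
Each summand must be bounded by $C(\ldots)\cdot|||(v_h,0)|||$, so the estimate reduces to three independent pieces. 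Throughout we use the elementary embeddings $\mathcal{J}_h(v_h,v_h)^{1/2}$, $h\|\mathcal{L}_hv_h\|_\Omega$, and $h^{s-1}\|v_h\|_{H^1(\Omega)}$ are all controlled by $|||(v_h,0)|||$, which follow directly from the definition of $s_h$.

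For the jump term I would apply Cauchy--Schwarz and the first estimate of \thref{weakcons} to get $\mathcal{J}_h(\pi_{sz}u,v_h) \le Ch^{s-1}\|u\|_{H^s(\Omega)}|||(v_h,0)|||$. For the Tikhonov term I would use Cauchy--Schwarz plus the $H^1$-stability of the Scott--Zhang interpolant, which yields $h^{2(s-1)}\langle\pi_{sz}u,v_h\rangle_\Omega \le Ch^{s-1}\|u\|_{H^1(\Omega)}\cdot h^{s-1}\|v_h\|_{H^1(\Omega)} \le Ch^{s-1}\|u\|_{H^s(\Omega)}|||(v_h,0)|||$.

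The third piece is the heart of the lemma and the place where consistency with $f$ must be exploited. Since $s\ge 2$ we have $\mathcal{L}u = f$ pointwise a.e., so element-wise $\mathcal{L}_h u = f$, and I would decompose
\begin{equation*}
\mathcal{L}_h\pi_{sz}u - f_h = \mathcal{L}_h(\pi_{sz}u - u) + (f - f_h).
\end{equation*}
Cauchy--Schwarz then gives
\begin{equation*}
h^2(\mathcal{L}_h\pi_{sz}u - f_h, \mathcal{L}_h v_h)_\Omega \le \bigl(h\|\mathcal{L}_h(\pi_{sz}u - u)\|_\Omega + h\|f - f_h\|_\Omega\bigr)\cdot h\|\mathcal{L}_h v_h\|_\Omega.
\end{equation*}
The first summand is exactly the $s\ge 2$ clause of \thref{weakcons}, contributing $C(1+h^2\|P\|_{L^\infty})h^{s-1}\|u\|_{H^s(\Omega)}$, and the factor $h\|\mathcal{L}_h v_h\|_\Omega$ is absorbed into $|||(v_h,0)|||$.

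The main obstacle I anticipate is the approximation bound $\|f - f_h\|_\Omega \le Ch^{s-2}\|f\|_{H^{s-2}(\Omega)}$ (yielding the claimed $h^{s-1}\|f\|_{H^{s-2}(\Omega)}$ after multiplication by $h$). The difficulty is that $f_h$ is the $L^2$-projection onto the \emph{homogeneous} space $W_h^p$, so the standard optimal-order estimate is not automatic when $f$ does not vanish on $\partial\Omega$. My plan is to use the best-approximation characterisation $\|f-f_h\|_\Omega \le \|f-g_h\|_\Omega$ for any $g_h\in W_h^p$, choose $g_h$ as a Scott--Zhang interpolant of $f$ into $V_h^p$ modified by a boundary cutoff at distance $O(h)$ from $\partial\Omega$, and control the extra boundary-strip contribution via a trace/interpolation argument; because $f\in H^{s-2}$ with $s\ge 2$ the boundary strip contributes at most the right order in $h$. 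Summing the three contributions then yields the stated inequality.
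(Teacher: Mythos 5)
Your decomposition and the first two bounds coincide exactly with the paper's proof: the paper also expands $s_h(\pi_{sz}u,v_h)-h^2(f_h,\mathcal{L}_hv_h)_\Omega$ into the jump, Tikhonov and residual pieces, bounds the first by \thref{weakcons} and the second by Cauchy--Schwarz, and splits the residual as $h^2(\mathcal{L}_h(\pi_{sz}u-u),\mathcal{L}_hv_h)_\Omega+h^2(f-f_h,\mathcal{L}_hv_h)_\Omega$ using $\mathcal{L}u=f$. The only divergence is the treatment of $\|f-f_h\|_\Omega$, and there your proposed patch does not work as stated. If $f_h$ really is the $L^2$-projection onto the homogeneous space $W_h^p$, then every competitor $g_h$ vanishes on $\partial\Omega$, and a scaling argument on boundary elements shows $\inf_{g_h\in W_h^p}\|f-g_h\|_\Omega\gtrsim h^{1/2}\|f\|_{L^2(\partial\Omega)}$ whenever $f$ has a nonzero trace; so the boundary strip does \emph{not} contribute "at most the right order" once $s-2>1/2$, and your cutoff construction caps the rate at $h^{1/2}$ rather than $h^{s-2}$. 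The paper's (tersely stated) resolution is different: in this higher-regularity regime $f\in L^2(\Omega)$, so one simply \emph{takes} $f_h:=\pi_h f\in V_h^p$, the $L^2$-projection onto the full conforming space, which still satisfies the defining identity $(f_h,w_h)_\Omega=\langle f,w_h\rangle_\Omega$ for all $w_h\in W_h^p$ and enjoys the unconstrained optimal estimate $\|f-\pi_hf\|_\Omega\le Ch^{s-2}|f|_{H^{s-2}(\Omega)}$. Since $f_h$ also enters the scheme through $G(f,v_h)=h^2(f_h,\mathcal{L}_hv_h)_\Omega$ with $\mathcal{L}_hv_h\notin W_h^p$, the choice of representative matters for the method itself, so this redefinition should be made explicit rather than repaired a posteriori; with $f_h=\pi_hf$ the rest of your argument goes through and reproduces the paper's estimate.
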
    
\begin{proof}
    First we have
\begin{equation*}
    \begin{aligned}
    s(\pi_{sz}u,v_h)-h^2(f_h,\mathcal{L}_hv_h)=&\mathcal{J}_h(\pi_{sz}u,v_h)+h^2(\mathcal{L}_h\pi_{sz}u-f_h,\mathcal{L}_hv_h)_\Omega\\
    &+h^{2(s-1)}\langle\pi_{sz}u,v_h\rangle_\Omega.
    \end{aligned}
\end{equation*}
The first quantity is bounded by using \thref{weakcons} directly and the third quantity is easily bounded by using the Cauchy-Schwarz inequality. For the second quantity, we have
\begin{equation*}
    \begin{aligned}
        h^2(\mathcal{L}_h\pi_{sz}u-f_h,\mathcal{L}_hv_h)_\Omega & = h^2(\mathcal{L}_h(\pi_{sz}u-u),\mathcal{L}_hv_h)_\Omega+h^2(f-f_h,\mathcal{L}_hv_h)_\Omega\\
        &\le Ch^{s-1}\left(C(P)||u||_{H^s(\Omega)}+||f||_{H^{s-2}(\Omega)}\right)||h\mathcal{L}_hv_h||_\Omega,
    \end{aligned}
\end{equation*}
where $C(P) = 1+h^2||P||_{L^\infty}$. The estimate follows from the fact that $\pi_{sz}$ has an optimal convergence property in all Sobolev norms and $\pi_h$ has an optimal convergence property for the $L^2$-norm ($f_h=\pi_hf$ when $f\in L^2(\Omega)$).
Thus we obtain
\begin{equation*}
    h^2(\mathcal{L}_h\pi_{sz}u-f_h,\mathcal{L}_hv_h)_\Omega \le C(1+h^2||P||_{L^\infty})h^{s-1}||u||_{H^s(\Omega)}||h\mathcal{L}_hv_h||_\Omega.
\end{equation*}
Then by combining the two cases above we get the desired estimate
\begin{equation}\label{sherr}
    s_h(\pi_{sz}u,v_h)-h^2(f_h,\mathcal{L}_hv_h)_\Omega\le C(1+h^2||P||_{L^\infty})h^{s-1}||u||_{H^s(\Omega)}|||(v_h,0)|||.
\end{equation}
\end{proof}
\begin{remark}
    Notice that, comparing with \thref{consis1}, \thref{consis2} requires the source term $f$ to have a higher order regularity $f\in H^{s-2}(\Omega)$. 
\end{remark}
\begin{lemma}\thlabel{stabnormest2}
Let $(u_h,z_h)$ be the solution to (\ref{smoothsys}) and let $u\in H^s(\Omega)$ be the solution to the continuous problem. Let $\alpha\le 1$, and $\tau\eta=0$. Then
\begin{equation*}
    |||(u_h-\pi_{sz}u,z_h)|||\le C\left((1+h||P||_{L^\infty})h^{s-1}||u||_{H^s(\Omega)}+h^{s-1}||f||_{H^{s-2}(\Omega)}+Pt\right),
\end{equation*}
where $Pt = h^{-\alpha}||\delta q||_\omega+h^{-\frac{\tau}{2}}||\delta f||_\Omega$.
\end{lemma}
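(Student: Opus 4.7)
The plan mirrors the proof of \thref{stabnormest1}, with the necessary adjustments for the higher-regularity consistency estimate \thref{consis2} and for the perturbation data $(\delta q,\delta f)$. I start from the discrete inf-sup condition of \thref{BNBA}, which reduces the claim to bounding $A[(u_h-\pi_{sz}u,z_h),(v_h,w_h)]$ by the stated right-hand side times $|||(v_h,w_h)|||$. Substituting the perturbed discrete system (\ref{smoothsys}) into the $v_h$- and $w_h$-rows of $A$ and using that the exact solution satisfies $a(u,w_h)=\langle f,w_h\rangle_\Omega = (f_h,w_h)_\Omega$ for $w_h\in W_h^p$, the left-hand side decomposes as
\begin{equation*}
\begin{aligned}
A[(u_h-\pi_{sz}u,z_h),(v_h,w_h)] = &\; h^{-2\alpha}(u-\pi_{sz}u,v_h)_\omega + a(u-\pi_{sz}u,w_h) \\
&- \bigl(s_h(\pi_{sz}u,v_h)-h^2(f_h,\mathcal{L}_h v_h)_\Omega\bigr) \\
&+ h^{-2\alpha}(\delta q,v_h)_\omega + h^2(\delta f,\mathcal{L}_h v_h)_\Omega + (\delta f,w_h)_\Omega.
\end{aligned}
\end{equation*}

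The first three (unperturbed) contributions are then estimated individually. The interpolation term $h^{-2\alpha}(u-\pi_{sz}u,v_h)_\omega$ is handled by Cauchy--Schwarz and the $L^2$-approximation of $\pi_{sz}$ on $\omega$, producing $C h^{s-\alpha}\|u\|_{H^s(\Omega)}\cdot h^{-\alpha}\|v_h\|_\omega \le C h^{s-1}\|u\|_{H^s(\Omega)}\,|||(v_h,0)|||$ because $\alpha\le 1$. The primal consistency residue $s_h(\pi_{sz}u,v_h)-h^2(f_h,\mathcal{L}_h v_h)_\Omega$ is controlled directly by \thref{consis2} and supplies the $C(1+h^2\|P\|_{L^\infty})h^{s-1}\|u\|_{H^s(\Omega)} + Ch^{s-1}\|f\|_{H^{s-2}(\Omega)}$ contribution. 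The adjoint term $a(u-\pi_{sz}u,w_h)$ is treated verbatim as in the proof of \thref{stabnormest1}: for $\eta=0$ integrate by parts elementwise to generate jump, boundary and $h\mathcal{L}_h w_h$ contributions matching the components of $s_\eta^*$; for $\tau=0$ use the optimal $H^1$-approximation of $\pi_{sz}$ together with the fact that the Tikhonov summand $h^\tau\langle\cdot,\cdot\rangle_\Omega=\langle\cdot,\cdot\rangle_\Omega$ now controls $\|w_h\|_{H^1(\Omega)}$ through $s_h^*$.

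The final three terms yield the perturbation $Pt=h^{-\alpha}\|\delta q\|_\omega + h^{-\tau/2}\|\delta f\|_\Omega$. Cauchy--Schwarz on $h^{-2\alpha}(\delta q,v_h)_\omega$ gives exactly $h^{-\alpha}\|\delta q\|_\omega\cdot|||(v_h,0)|||$. The volume residue $h^2(\delta f,\mathcal{L}_h v_h)_\Omega$ is bounded by $h\|\delta f\|_\Omega\cdot|||(v_h,0)|||$, which is absorbed into $h^{-\tau/2}\|\delta f\|_\Omega$ because $h\le h^{-\tau/2}$ for $\tau\ge 0$ and $h\le 1$. Finally $(\delta f,w_h)_\Omega \le \|\delta f\|_\Omega\cdot h^{-\tau/2}\cdot h^{\tau/2}\|w_h\|_\Omega$, and in both sub-cases $\tau>0$ and $\tau=0$ the factor $h^{\tau/2}\|w_h\|_\Omega$ is dominated by $|||(0,w_h)|||$ thanks to either the $h^\tau\langle\cdot,\cdot\rangle_\Omega$ summand or the $\langle\cdot,\cdot\rangle_\Omega$ summand of $s_h^*$.

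The main obstacle is the $\eta=0$ vs.\ $\tau=0$ case split in the estimate of $a(u-\pi_{sz}u,w_h)$, since $s_h^*$ controls genuinely different quantities in the two regimes and the same ultimate bound has to be reached by two different routes (elementwise integration by parts against $s_\eta^*$ versus direct $H^1$ control through the Tikhonov term). Everything else is careful bookkeeping of the $h$-powers, checking that every non-perturbative residue stays within the advertised $h^{s-1}$ rate while the three $(\delta q,\delta f)$-terms align exactly with $Pt$.
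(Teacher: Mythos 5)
Your proposal is correct and follows essentially the same route as the paper, whose own proof of this lemma is just the remark that it "follows using the same arguments as \thref{stabnormest1}": you start from the inf-sup condition of \thref{BNBA}, obtain the same Galerkin-orthogonality decomposition from the perturbed system (\ref{smoothsys}), bound the unperturbed residues via the interpolation estimate, \thref{consis2}, and the $\eta=0$/$\tau=0$ case split for $a(u-\pi_{sz}u,w_h)$, and track the three data-perturbation terms into $Pt$ with the correct powers of $h$. Your write-up in fact supplies more detail than the paper does, and the bookkeeping (in particular $h^{-2\alpha}(\delta q,v_h)_\omega\le h^{-\alpha}\|\delta q\|_\omega\,|||(v_h,0)|||$ and $(\delta f,w_h)_\Omega\le h^{-\tau/2}\|\delta f\|_\Omega\,|||(0,w_h)|||$ via the Tikhonov summand of $s_h^*$) checks out.
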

\begin{proof}
The proof follows using the same arguments as \thref{stabnormest1}. 

\end{proof}
\begin{co}\thlabel{interiorest}
    Let $\alpha\ge 0$. The discrete solution $u_h$ converges optimally in $\omega$ if the measurements are free of perturbation. In addition the $H^1$ error in the whole domain $\Omega$ is bounded. More precisely,
    \begin{equation*}
       ||u-u_h||_\omega \le C\left((1+h||P||_{L^\infty})h^{s-1}||u||_{H^s(\Omega)}+||\delta q||_\omega+h^{-(\frac{\tau}{2}-\alpha)}||\delta f||_\Omega\right),
    \end{equation*}
    and
    \begin{equation*}
        ||u-u_h||_{H^1(\Omega)} \le C((1+h||P||_{L^\infty})||u||_{H^s(\Omega)}+Pt),
    \end{equation*}
    where $P_t = h^{-\alpha}||\delta q||_\omega+h^{-\frac{\tau}{2}}||\delta f||_\Omega$.
\end{co}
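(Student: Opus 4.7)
The plan is to apply the triangle inequality with the Scott--Zhang interpolant as a pivot,
\[
    u - u_h = (u - \pi_{sz}u) + (\pi_{sz}u - u_h),
\]
and then control each piece separately. The first summand is handled by the optimal approximation property of $\pi_{sz}$: on the data subdomain we have $\|u-\pi_{sz}u\|_\omega \le Ch^s\|u\|_{H^s(\Omega)}$, and globally $\|u-\pi_{sz}u\|_{H^1(\Omega)} \le Ch^{s-1}\|u\|_{H^s(\Omega)}$. Both are of lower order than (or of the same order as) the right-hand sides being claimed and can be absorbed directly.

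For the discrete difference $e_h := \pi_{sz}u - u_h$, the key observation is that the triple norm $|||(e_h,z_h)|||$ controls each of the two quantities of interest, one for each estimate, by simply reading off the relevant summand. Inspecting the definition \eqref{stabnorm}, the $h^{-2\alpha}(\cdot,\cdot)_\omega$ term gives
\[
    \|e_h\|_\omega \le h^{\alpha}\,|||(e_h,z_h)|||,
\]
while the Tikhonov-type summand $h^{2(s-1)}\langle\cdot,\cdot\rangle_\Omega$ embedded in $s_h$ (see \eqref{primstab}) gives
\[
    \|e_h\|_{H^1(\Omega)} \le h^{-(s-1)}\,|||(e_h,z_h)|||.
\]
Plugging in the bound supplied by \thref{stabnormest2} then finishes the argument. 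For the $\omega$-estimate the prefactor $h^\alpha$ multiplies the perturbation contribution $h^{-\alpha}\|\delta q\|_\omega + h^{-\tau/2}\|\delta f\|_\Omega$ to produce precisely $\|\delta q\|_\omega + h^{-(\tau/2-\alpha)}\|\delta f\|_\Omega$, matching the statement, while the approximation terms pick up an extra $h^\alpha$ relative to the $h^{s-1}$ rate, which is at worst absorbed into $h^{s-1}(1+h\|P\|_{L^\infty})\|u\|_{H^s(\Omega)}$. For the $H^1(\Omega)$-estimate the factor $h^{-(s-1)}$ cancels against the $h^{s-1}$ that decorates both the $\|u\|_{H^s(\Omega)}$ and $\|f\|_{H^{s-2}(\Omega)}$ contributions in \thref{stabnormest2}, leaving a bound that is uniform in $h$ for the regularity terms.

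There is no substantive obstacle; the real work has already been done in \thref{stabnormest2}. The only bookkeeping required is (i) pairing each summand of the triple norm with the correct norm of $e_h$, and (ii) absorbing the $h^{s-1}\|f\|_{H^{s-2}(\Omega)}$ contribution via the PDE relation $\mathcal{L}u = f$, which under the assumption $P\in W^{s-2,\infty}(\Omega)$ yields $\|f\|_{H^{s-2}(\Omega)} \le C(1+\|P\|_{W^{s-2,\infty}})\|u\|_{H^s(\Omega)}$ and so folds cleanly into the $(1+h\|P\|_{L^\infty})\|u\|_{H^s(\Omega)}$ term on the right-hand side.
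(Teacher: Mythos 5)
Your proposal is correct and follows essentially the same route as the paper: triangle inequality through $\pi_{sz}u$, optimal approximation for $u-\pi_{sz}u$, extraction of the $h^{\alpha}$ and $h^{-(s-1)}$ scalings from the corresponding summands of the triple norm, and then \thref{stabnormest2}. Your explicit remark on absorbing $h^{s-1}\|f\|_{H^{s-2}(\Omega)}$ via $\mathcal{L}u=f$ is a small point of care that the paper leaves implicit, but it does not change the argument.
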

\begin{proof}
    From the definition of $|||\cdot|||$ as (\ref{stabnorm}), we have
    \begin{equation*}
    \begin{aligned}
        ||u-u_h||_\omega &\le ||u-\pi_{sz}u||_\omega +||\pi_{sz}u-u_h||_\omega\\
        &\le Ch^{s-1}||u||_{H^s(\Omega)}+h^{\alpha}|||(\pi_{sz}u-u_h,0)|||\\
        &\le C\left((1+h||P||_{L^\infty})h^{s-1}||u||_{H^s(\Omega)}+||\delta q||_\omega+h^{-(\frac{\tau}{2}-\alpha)}||\delta f||_\Omega\right),
        \end{aligned}
    \end{equation*}
    and
    \begin{equation*}
        \begin{aligned}
        ||u-u_h||_{H^1(\Omega)} &\le ||u-\pi_{sz}u||_{H^1(\Omega)} + ||\pi_{sz}u-u_h||_{H^1(\Omega)} \\
        &\le C||u||_{H^1(\Omega)}+h^{-(s-1)}|||(\pi_{sz}u-u_h,0)|||\\
        &\le C((1+h||P||_{L^\infty})||u||_{H^s(\Omega)}+Pt),
        \end{aligned}
    \end{equation*}
   where the last inequality comes from \thref{stabnormest2}.
    
\end{proof}

Now, with the conditional stability estimates given in Section \ref{sec:continuous}, we can prove \thref{prime} by using the same arguments in the proof of \thref{ultraweakconv} and \thref{prime12}.\\

We discuss here the choice and trade-off between convergence rate and stability. In terms of the $H^1$ error, when $s>1$, the choice of $\eta$ is robust. We could choose a set of parameters that depend only on the regularity $s$ working for both lower order and higher order solutions, namely, by letting $0<\alpha\le 1, \eta =0$ and $0<\tau\le2$. However, from \thref{prime}, it is clear that the optimal choice for $\alpha$ and $\tau$ is $\alpha = \tau =0$, to minimize the error caused by perturbations. Now, if we control $\tau = 0$, then any large number can be taken for $\eta$ to generate convergence according to \thref{prime}. Specifically, if $\eta\to \infty$, we get a reduced system, namely, $s^*_\eta(z_h,w_h) = 0$ and $s^*_h(z_h,w_h) = h^\tau\langle z_h,w_h\rangle_\Omega$.\\

In terms of the $L^2$ error, we emphasize that there is always a trade-off between convergence rate and stability. Indeed a comparison of \thref{H-2norm} and \thref{prime}. \thref{H-2norm} show the trade-off between the convergence rate and the condition number of the linear system. The result of \thref{prime} shows the tension between the convergence rate and the error growth rate caused by inaccuracies in the data measurement. Based on the above discussion, we give a table for different parameters chosen for considering optimal convergence in the $L^2$- and $H^1$-norm respectively.
\begin{table}[h!]
\centering
\caption{Parameters for optimal convergence in $L^2$ and $H^1$ norm.}
\begin{tabular}{ | c | c| c | c| c|} 

  \hline
     & $\alpha$ & $\eta$ & $\tau$ & Stability \\ 
  \hline
  $L^2$ error & $\alpha = 1$ & $\eta = 0$ & $ \tau = 2$ & Worse stability under numerical integration/perturbation \\ 
  \hline
  $H^1$ error& $\alpha = 0$ & $\eta \ge 0$ & $\tau=0$ & N/A \\ 
  \hline
\end{tabular}

\label{table:1}
\end{table}

Now, we give an estimate of the effect of interpolation error, numerical integration or rounding error in computing the stiffness matrix of the linear system (\ref{smoothsys}). Denote the theoretical solution to (\ref{smoothsys}) still by $u_h$ and the computational solution by $\tilde u_h$. Let ($A_h$)$A_{h\bar{e}}$ be the matrix (non)perturbed by rounding errors and numerical integration, and assume 
\begin{equation}\label{cdtnum}
||A_h-A_{h\bar{e}}||\le \bar{e}||A_h||, 
\end{equation} 
By numerical linear algebra, with shape functions chosen to have a mass matrix $\mathcal{M}_h$ with condition number $\mathcal{K} (\mathcal{M}_h)$, that is uniform with $h$ due to quasi-uniformity of $\mathcal{T}_h$. Let $U_h$ and $\tilde U_h$ be the coefficients of $u_h$ and $\tilde u_h$ in $V^p_h$, we have
\begin{equation}\label{condi}
\begin{aligned}
||u_h-\tilde u_h||_\Omega & \le \sigma^\frac{1}{2}_{max}(\mathcal{M}_h)||U_h-\tilde U_h||_N \\
    &\le \sigma^\frac{1}{2}_{max}(\mathcal{M}_h)\mathcal{K}(A_h)||U_h||_N \\
    &\le \sigma^\frac{1}{2}_{max}(\mathcal{M}_h)\mathcal{K}(A_h) \sigma_{min}^{-\frac{1}{2}}(\mathcal{M}_h)||M^\frac{1}{2}U_h||_N \\
    &\le C(1+h^2||P||^2_{L^\infty}) \mathcal{K}^\frac{1}{2}(\mathcal{M}_h)\frac{\bar{e}}{h^{2s}}||u_h||_\Omega.
\end{aligned}
\end{equation}
\begin{co}[Error estimate with interpolation error]\thlabel{err}
    Let $\alpha = \tau = 0$ and $\eta\to \infty$. Let $\bar{e} $ be the error with respect to solving linear equations (rounding errors, interpolation errors, etc), and $\tilde u_h$ be the solution considering $\bar{e}$ , then by (\ref{condi}):
    $$||u-\tilde u_h||_{H^1(B)}\le C(P)h^{\kappa(s-1)}\left(||u||_{H^{s}(\Omega)}+h^{-(s-1)}||\delta g||_\omega+h^{-(s-1)}||\delta f||_\Omega\right)+ R(h),$$
    where 
    $$R(h)=C(P)\bar{e}\left(\frac{1}{h^{2s+1}}||u||_{H^{s}(\Omega)}+\frac{1}{h^{3s}}||\delta f||_\Omega+\frac{1}{h^{3s}}||\delta q||_\omega\right),$$
    and $C(P) =\exp\left(C\kappa^{1-\epsilon}(||P||^{\frac{2}{3}}_{L^\infty(\Omega)}+1)\right)$
\end{co}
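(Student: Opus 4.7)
The argument is a direct combination of Theorem \ref{prime} with the perturbation analysis already developed in (\ref{cdtnum})--(\ref{condi}). I would split
\begin{equation*}
\|u-\tilde u_h\|_{H^1(B)} \le \|u-u_h\|_{H^1(B)} + \|u_h-\tilde u_h\|_{H^1(B)},
\end{equation*}
where $u_h$ denotes the exact discrete solution of the stabilized system (\ref{smoothsys}) and $\tilde u_h$ is the computed solution affected by rounding and numerical integration. The first summand will carry the usual discretization error dressed by the conditional-stability power $h^{\kappa(s-1)}$, while the second summand will carry all of the $\bar e$-dependence.

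For the first summand I would simply invoke Theorem \ref{prime} with the parameter choice $\alpha=\tau=0$ and $\eta\to\infty$. Under this choice the perturbation quantity $Pt$ inherited from Lemma \ref{stabnormest2}, once rescaled by $h^{-(s-1)}$ to pass from the stabilized triple norm to the $H^1(\Omega)$-norm, becomes $h^{-(s-1)}\|\delta q\|_\omega+h^{-(s-1)}\|\delta f\|_\Omega$. Combined with the $h^{\kappa(s-1)}$ prefactor coming from the conditional stability estimate \thref{co2}, this reproduces exactly the first bracketed expression of the claim.

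For the second summand I would apply the discrete inverse inequality (\ref{inv}) to pass from $H^1(B)$ to $L^2(\Omega)$ and then use (\ref{condi}) to obtain
\begin{equation*}
\|u_h-\tilde u_h\|_{H^1(B)} \le Ch^{-1}\|u_h-\tilde u_h\|_{L^2(\Omega)} \le C(P)\frac{\bar e}{h^{2s+1}}\|u_h\|_\Omega.
\end{equation*}
The bound $\|u_h\|_\Omega \le \|u\|_\Omega + \|u-u_h\|_{H^1(\Omega)}$, together with Lemma \ref{stabnormest2} (rescaled again by $h^{-(s-1)}$), yields $\|u_h\|_\Omega\le C(\|u\|_{H^s}+h^{-(s-1)}\|\delta q\|_\omega+h^{-(s-1)}\|\delta f\|_\Omega)$. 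Multiplying these two estimates produces the coefficient $\bar e/h^{2s+1}$ in front of $\|u\|_{H^s}$ and the coefficient $\bar e/h^{3s}$ in front of $\|\delta q\|_\omega$ and $\|\delta f\|_\Omega$, matching $R(h)$ exactly; adding the two summands gives the claim.

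The one delicate point is the $h$-bookkeeping: because the stabilizer $s_h$ weighs the $H^1$-component of the primal variable by $h^{2(s-1)}$, every translation of a triple-norm estimate into an $H^1$-estimate produces a factor $h^{-(s-1)}$, and it is precisely this rescaling that is responsible for the perturbation multiplier being $h^{-3s}$ rather than $h^{-2s-1}$. Conceptually no new stability ingredient is needed beyond what was developed for Theorem \ref{prime} and the condition-number chain (\ref{condi}); the statement is essentially an algebraic consolidation of those two ingredients.
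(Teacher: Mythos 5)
Your proposal is correct and follows essentially the same route as the paper: a triangle inequality splitting off $\|u_h-\tilde u_h\|_{H^1(B)}$, Theorem \thref{prime} for the discretization part, and the chain inverse inequality $\to$ (\ref{condi}) $\to$ bound on $\|u_h\|_\Omega$ via $\|u\|_\Omega+\|u-u_h\|_\Omega$ and \thref{interiorest}/\thref{stabnormest2} for the $\bar e$-part, with the same $h^{-(s-1)}$ rescaling accounting for the $h^{-3s}$ factors. No substantive difference from the paper's argument.
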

\begin{proof}
    The additional error for $||u-\tilde u_h||_{H^1(B)}$ is $||u_h-\tilde u_h||_{H^1(B)}$. Applying the inverse inequality (\ref{inv}) and the condition number bound of \ref{condi} we have
    \begin{equation*}
        \begin{aligned}
            ||u_h-\tilde u_h||_{H^1(B)} &\le \frac{C}{h}||u_h-\tilde u_h||_\Omega\\
            &\le \frac{C(1+h^2||P||^2_{L^\infty})\bar{e}}{h^{2s+1}}||u_h||_\Omega\\
            &\le \frac{C(1+h^2||P||^2_{L^\infty})\bar{e}}{h^{2s+1}}(||u_h-u||_\Omega+||u||_\Omega)\\
            &\le C'(P)\bar{e} (\frac{1}{h^{2s+1}}||u||_{H^{s}(\Omega)}+\frac{1}{h^{3s}}||\delta f||_\Omega+\frac{1}{h^{3s}}||\delta q||_\omega),
        \end{aligned}
    \end{equation*}
   where $C'(P) = \exp\left(C\kappa^{1-\epsilon}(||P||^{\frac{2}{3}}_{L^\infty(\Omega)}+1)\right)$. The last inequality uses the bound for the $||u_h-u||_\Omega$ in \thref{interiorest}. The corollary then follows.
    
\end{proof}

\section{Numerical experiments}
\label{sec:experiments}
We present numerical experiments for the Schr\"odinger unique continuation problem (\ref{pbl}). First, we consider $\Omega$ to be the unit disk, and the solution function to be a piecewise linear function which is not smooth at $y =0$.
\begin{ex}\thlabel{H-1}
    Let $f\in H^{-1}(\Omega)$ to be defined as
    \begin{equation*}
        <f,w>_\Omega = \int_{-1}^1w(x,0)dx,
    \end{equation*}
    and consider the piece-wise linear function 
    \begin{equation*}
        u_0(x,y) = \left\{\begin{aligned}
            -y && \texttt{$y>0$}\\
            0 && \texttt{else},
        \end{aligned}
        \right.
    \end{equation*}
    It is easy to observe that $u_0$ is the solution to
    \begin{equation*}
        \left\{\begin{aligned}
            -\Delta u = f && \texttt{in $\Omega$}\\
            u=u_0 && \texttt{in $\omega$}.
        \end{aligned}
        \right.
    \end{equation*}
    In this example we 
    define the geometries of interest to be
    \begin{equation*}
    \omega = \{(x,y)\in \Omega|x>0,x^2+y^2>0.25\}
    \end{equation*}
    and
    \begin{equation*}
    B = \{(x,y) \in \Omega| x>0, x^2+y^2 > 0.0625\}
\end{equation*}
\end{ex}
\begin{figure}[h]
    \centering
    \caption{Domain in \thref{H-1}}
    \includegraphics[width = 5cm]{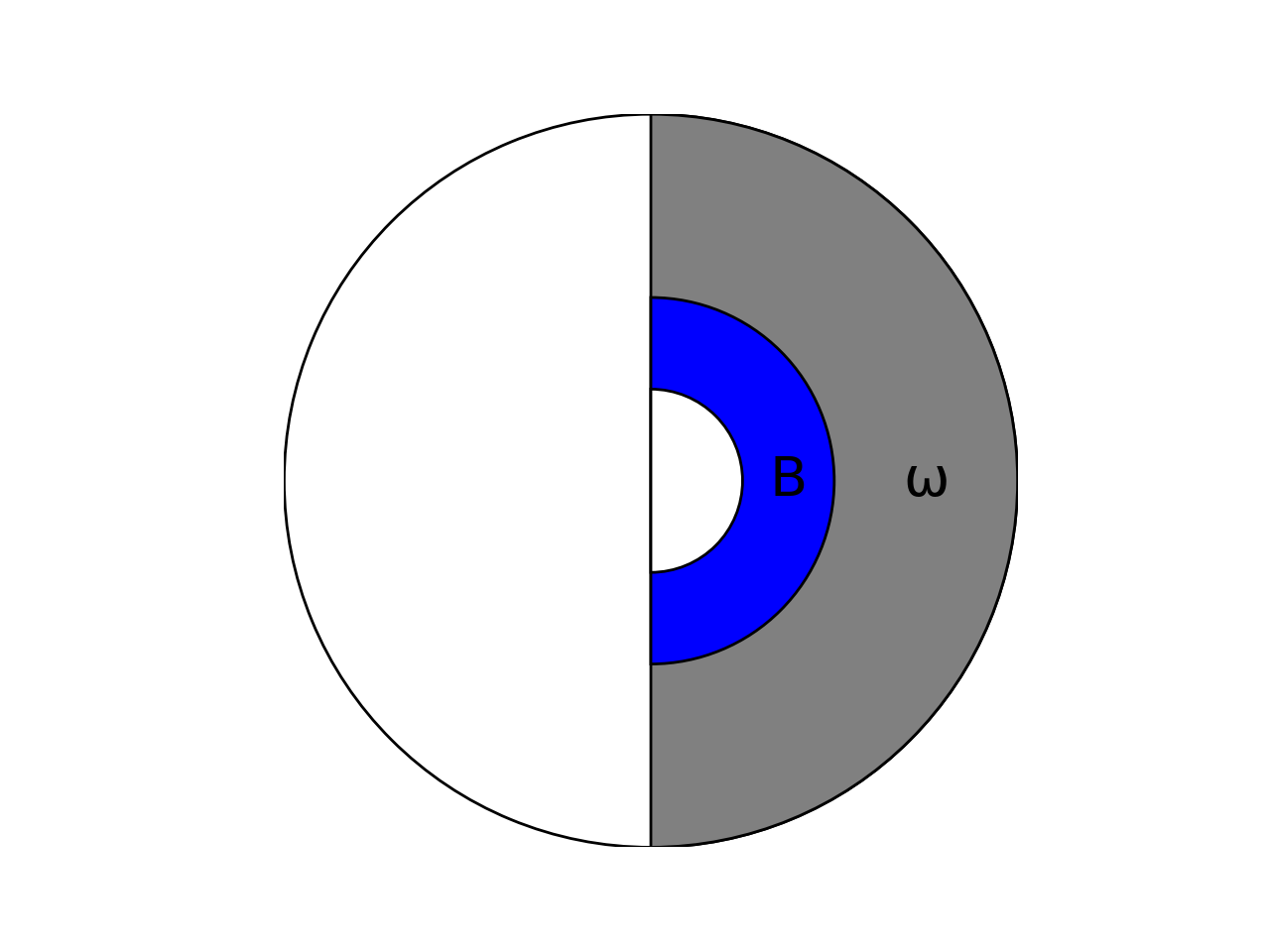}
    \label{fig:diskdomain}
\end{figure}
\begin{figure}[h]
    \centering
    \caption{Error comparison of schemes with $\eta = 0$ and $\tau = 0$}%
    \subfloat[\centering  $\eta =0, \alpha =1$ and $\tau \to \infty$.]{{\includegraphics[width=5.8cm]{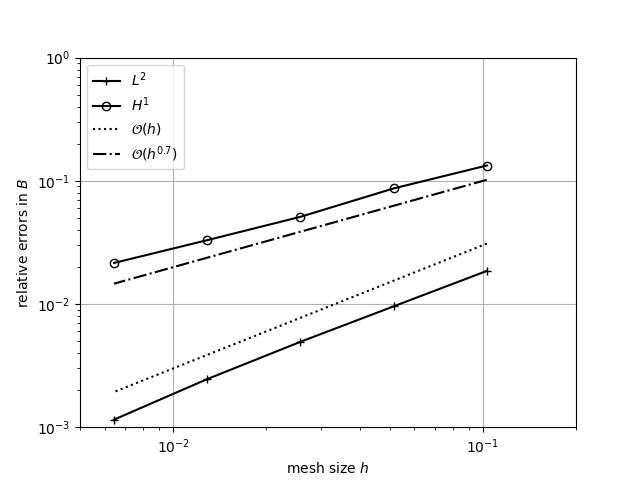} }}%
    \qquad
    \subfloat[\centering  $\eta \to \infty, \alpha = 1$ and $\tau =0$.]{{\includegraphics[width=5.8cm]{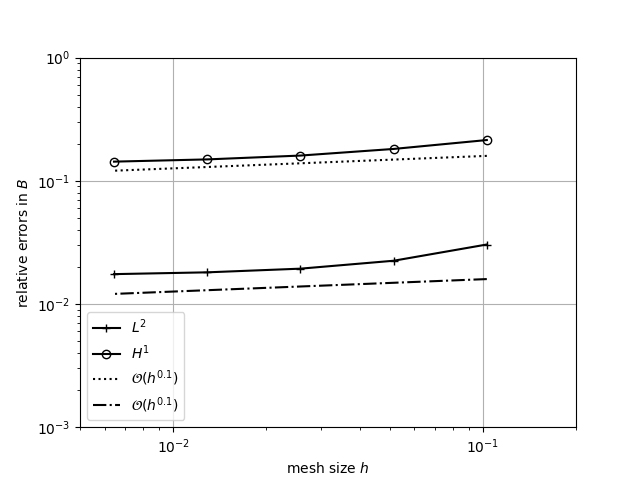} }}%
    \label{fig3.00}
\end{figure}
According to \thref{prime12}, $\eta\tau =0$ is needed to generate optimal convergence with respect to regularity $s$, but by \thref{cvg}, $\eta=0$ is needed to generate weak convergence for $H^1$-only case. We show by comparison that this does strengthen the convergence. In Figure \ref{fig3.00} it is observed that the simplified scheme obtained by setting $s_h^*(z_h,w_h) = \langle z_h,w_h\rangle_\Omega$ is generating much poorer convergence in both the $L^2$ and the $H^1$-norm than the scheme with dual stabilization terms defined by (\ref{dualstabeta}) with $\eta = 0$. This indicates that, from Table \ref{table:1}, the robustness for $\eta$ does not hold for rough $u$.\\
Now we show the influence of the parameter $\tau$ on the convergence rate. In this example, we control $\alpha = 1, \eta=0$. 
\begin{figure}[h]
    \centering
    \caption{Different parameters achieving increasing convergence}%
    \subfloat[\centering $\tau = 0$.]{{\includegraphics[width=5.8cm]{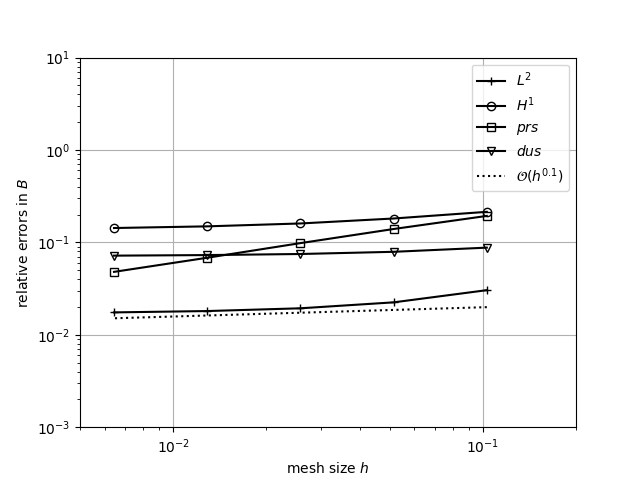} }}%
    \qquad
    \subfloat[\centering $\tau = 0.5$.]{{\includegraphics[width=5.8cm]{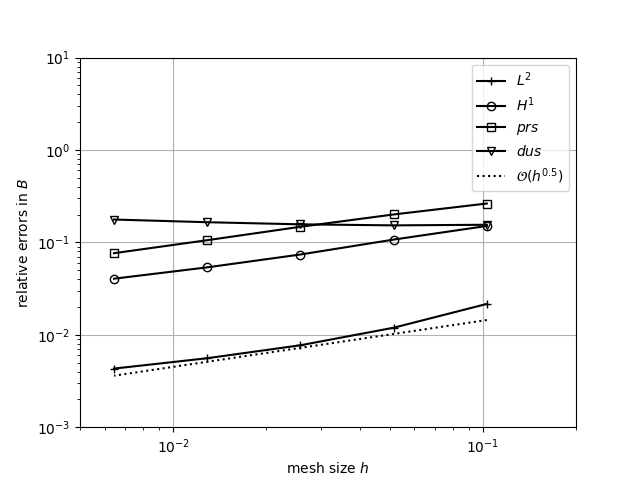} }}%
    \qquad
    \subfloat[\centering $\tau = 2$.]{{\includegraphics[width=5.8cm]{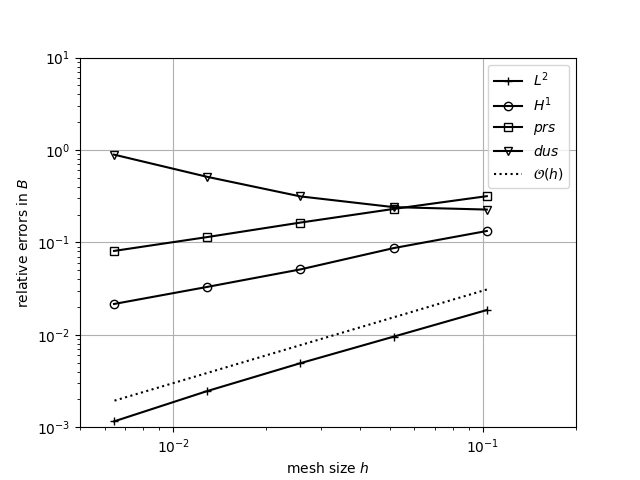} }}%
    \qquad
    \subfloat[\centering $\tau = 10$.]{{\includegraphics[width=5.8cm]{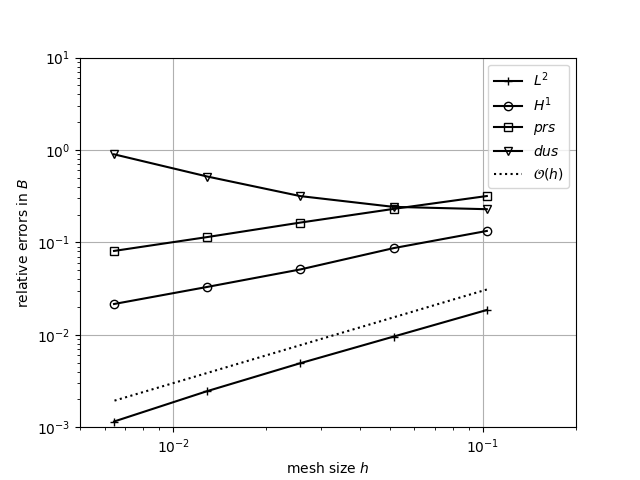} }}%
    \label{fig3.5}
\end{figure}
In Figure \ref{fig3.5}, we compare the convergence rate with respect to the $L^2$ and the $H^1$-norm for varying values of the parameter $/tau$. Optimal convergences are achieved for $\frac{\tau}{2}=1$. This is consistent with \thref{cvg}. $prs$ and $dus$ represent $\sqrt{\mathcal{J}_h(u_h,u_h)}$ and $||z_h||_\Omega$ respectively. It is shown in part (A) that $||z_h||_\Omega$ is bounded for $\tau = 0$ and may not be bounded when $\tau>0$, which is consistent with \thref{bdness1}. \\
\newpage
In the second example, we present numerical experiments for real Schrödinger unique continuation problem (\ref{pbl}) with the high order $H^1$-conforming method (\ref{sys}). We will focus on the classical Hadamard example for ill-posed elliptic equations with logarithm potential. 
\begin{ex}
Consider the unique continuation problem
\begin{equation}\label{exa}
    \left\{\begin{array}{rll}
        -\Delta u+10\log(y+\frac{1}{2})u &= 10\log(y+\frac{1}{2})\sin{x}\sinh{y}&\text{in $\Omega:(0,\pi)\times(0,1)$,} \\
        u(x,y) &= \sin(x)\sinh(y) &\text{for $(x, y)\in \omega$}
    \end{array}\right.
\end{equation}
whose solution is given by
\begin{equation*}
    u(x,y) = \sin(x)\sinh(y).
\end{equation*}
\end{ex}
For such Hadamard-type solutions, we consider the interior datum $q=u|_\omega$ and study the convergence in the target set $B$ for two geometric settings of $\omega$ and $B$, namely
\begin{equation}\label{conv}
    \omega =\Omega\backslash [\frac{\pi}{4},\frac{3\pi}{4}]\times [0.05,1],\;\; B=\Omega\backslash [\frac{\pi}{4},\frac{3\pi}{4}]\times [0.75,1],
\end{equation}
and 
\begin{equation}\label{nonconv}
    \omega =(\frac{\pi}{4},\frac{3\pi}{4})\times (0.05,0.5),\;\; B=(\frac{\pi}{8},\frac{7\pi}{8})\times (0.05,0.75).
\end{equation}

\begin{figure}[h]
    \centering
    \caption{Different domains.}%
    \subfloat[\centering Domain (\ref{conv}).]{{\includegraphics[width=5.8cm]{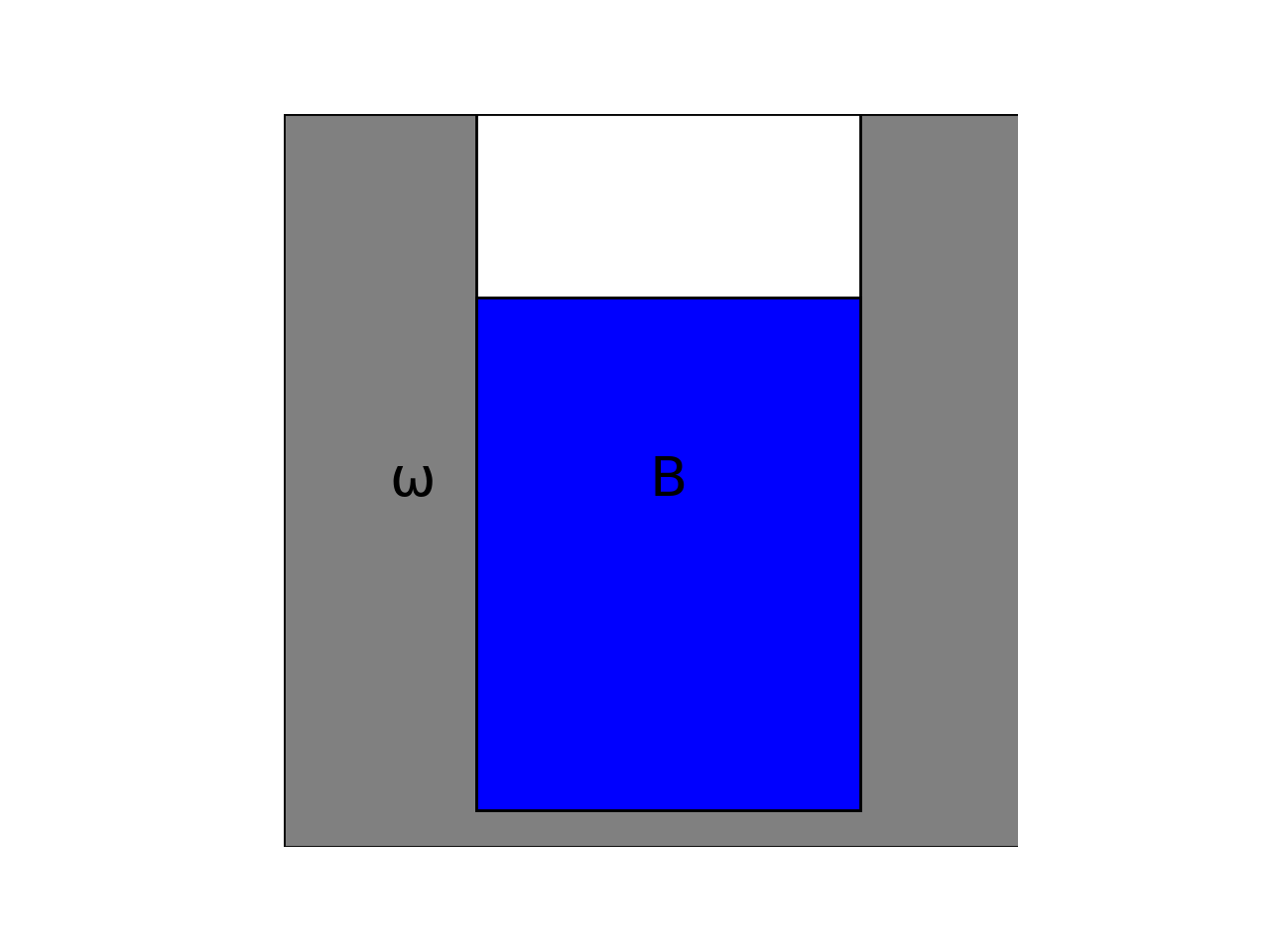} }}%
    \qquad
    \subfloat[\centering Domain (\ref{nonconv}).]{{\includegraphics[width=5.8cm]{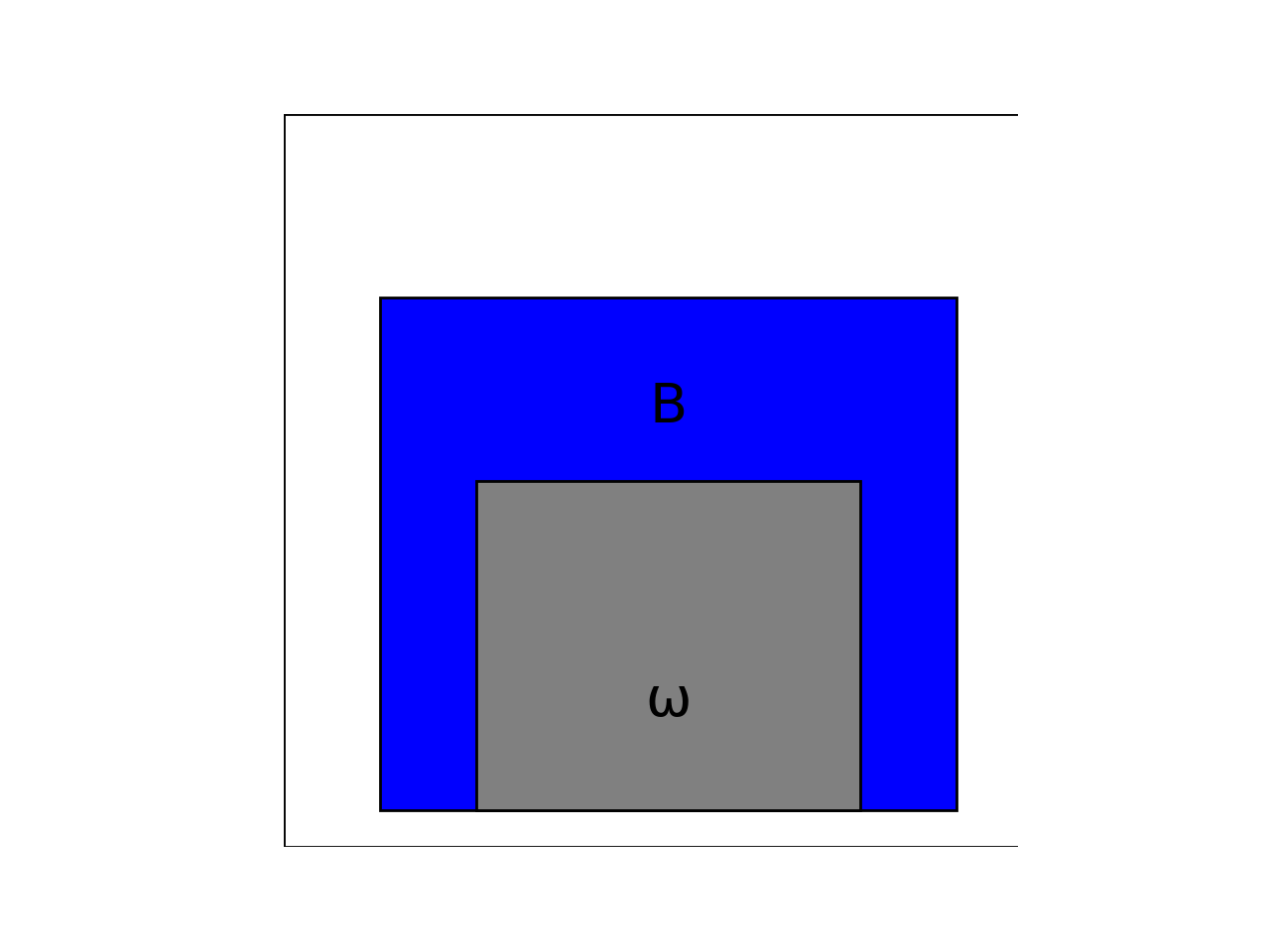} }}%
    \label{fig:domains}%
\end{figure}

In domain (\ref{conv}) we examined the $H^1$-error in the region of interest $B$ using three different orders of finite element space. It can be observed from Figure \ref{fig3.6} that the results of the schemes with or without additional dual stabilizer $s^*_\eta(\cdot, \cdot)$ (as defined in (\ref{dualstabeta})) almost coincides. This illustrates the robustness of $\eta$ for higher regularity solutions.\\
\begin{figure}[h]
    \centering
    \caption{Convergence comparison by letting $\eta = 0$ and $\eta\to \infty$}%
    \subfloat[\centering $\eta \to \infty$.]{{\includegraphics[width=5.8cm]{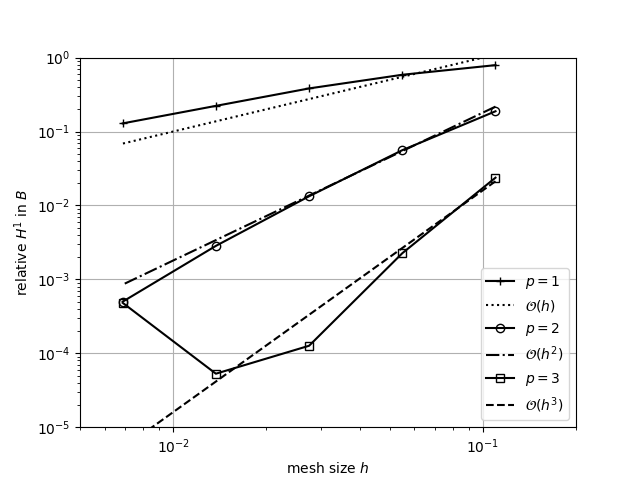} }}%
    \qquad
    \subfloat[\centering $\eta =0$.]{{\includegraphics[width=5.8cm]{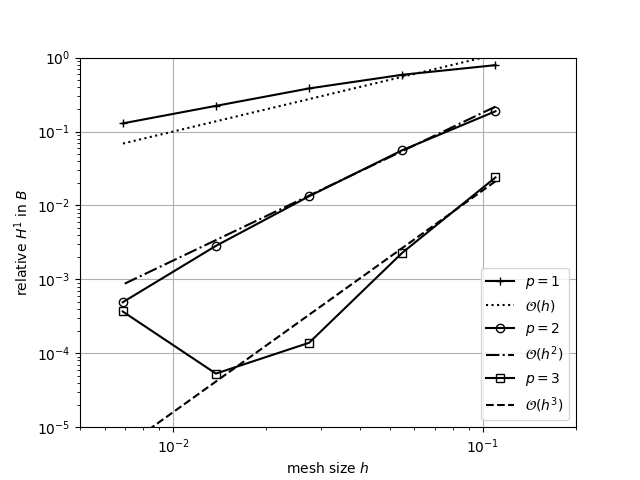} }}%
    \label{fig3.6}
\end{figure}

Now we give a convergence comparison for different domains. In this example we set $\alpha  = \tau =0$ and $\eta\to \infty$.\\
\begin{figure}[t]
    \centering
    \caption{Convergence comparison for different domains.}%
    \subfloat[\centering Convergence in domain (\ref{conv}).]{{\includegraphics[width=5.8cm]{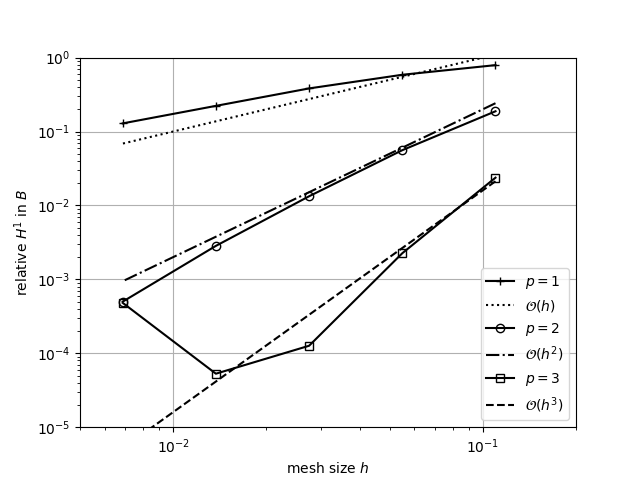} }}%
    \qquad
    \subfloat[\centering Convergence in domain (\ref{nonconv}).]{{\includegraphics[width=5.8cm]{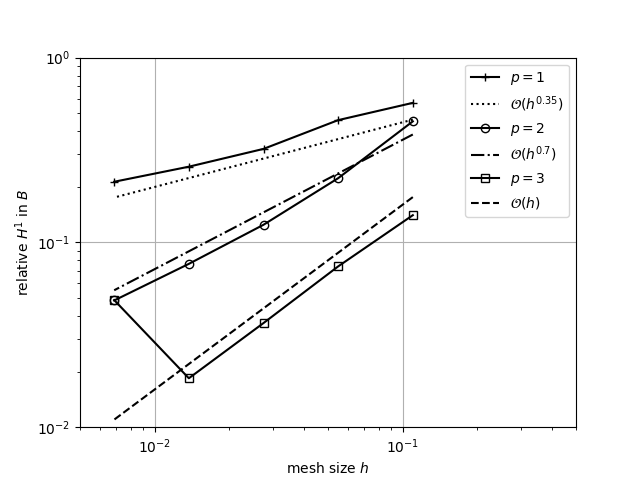} }}%
    \label{fig2}%
\end{figure}

In Figure \ref{fig2}, we observe that for domain (\ref{conv}), the convergence rate for each choice of order $p$ almost achieves optimal convergences, which means that in \thref{prime}, $\kappa\approx 1$, while for domain (\ref{nonconv}), $\kappa$ is around $0.35$. It is also worth noting that, for both cases, the finite element approximation using $p=3$ space has a turning point for convergence although no perturbation of data is imposed. This is probably due to the ill-posedness (high condition number) of the linear system system (\ref{smoothsys}) and the associated amplification of quadrature error in the numerical integration, which is consistent with \thref{err}.
\section*{Acknowledgments}
E.B. was supported by EPSRC grants EP/T033126/1 and EP/V050400/1. L.O. was supported by the European Research Council of the European Union, grant 101086697 (LoCal), and the Reseach Council of Finland, grants 347715 and 353096. Views and opinions expressed are those of the authors only and do not necessarily reflect those of the European Union or the other funding organizations.

\appendix

\section{Proof of \thref{tbi}}
\label{sec:prooftbi}
To prove \thref{tbi}, we need the following Lemma
\begin{lemma}\thlabel{log}
    Suppose that $A$, $B$, $D$, $c$ be positive numbers and $\kappa\in (0,1)$ satisfying $D\le B$ and $D\le e^{c(1-\kappa)\lambda}A+e^{-c\kappa\lambda}B$ for all $\lambda\ge\lambda_0>0$. Then for any $\epsilon$ positive and small there exists constant $C$ depending on $c,\epsilon$ such that
    \begin{equation*}
        D\le  e^{C\kappa^{1-\epsilon}(\lambda_0+1)}A^\kappa B^{1-\kappa}.
    \end{equation*}
\end{lemma}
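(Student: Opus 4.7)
The plan is to carry out the standard parameter-optimization (log-convexity) argument that is used to pass from a one-parameter family of bounds of the form $D \le e^{c(1-\kappa)\lambda} A + e^{-c\kappa\lambda} B$ to a multiplicative Hölder-type bound. The natural $\lambda$ that balances the two terms on the right-hand side is $\lambda^\ast := \frac{1}{c}\log(B/A)$, since then
\begin{equation*}
    e^{c(1-\kappa)\lambda^\ast} A \;=\; e^{-c\kappa\lambda^\ast} B \;=\; A^\kappa B^{1-\kappa}.
\end{equation*}
This is admissible precisely when $\lambda^\ast \ge \lambda_0$, and I would split the proof into two cases accordingly.

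In the first case, when $B \ge A\, e^{c\lambda_0}$, the choice $\lambda = \lambda^\ast$ is legitimate and substituting into the hypothesis immediately gives $D \le 2\, A^\kappa B^{1-\kappa}$. In the second case, when $B < A\, e^{c\lambda_0}$, I cannot apply the hypothesis at the optimal $\lambda^\ast$, so I would fall back on the auxiliary assumption $D \le B$. Writing $B = (B/A)^{\kappa}\, A^\kappa B^{1-\kappa}$ and using $B/A < e^{c\lambda_0}$ yields $D \le B \le e^{c\kappa\lambda_0}\, A^\kappa B^{1-\kappa}$. Combining the two cases produces the uniform bound $D \le \bigl(2 + e^{c\kappa\lambda_0}\bigr) A^\kappa B^{1-\kappa}$.

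To put this in the form stated in the lemma, I would absorb the additive constant into the exponential (at the cost of enlarging $C$) and then use the elementary inequality $\kappa \le \kappa^{1-\epsilon}$ valid for $\kappa\in(0,1)$ and $\epsilon>0$, together with $\lambda_0 \le \lambda_0 + 1$, to rewrite $c\kappa\lambda_0 \le C\kappa^{1-\epsilon}(\lambda_0+1)$ for a constant $C$ depending on $c$ and $\epsilon$. This yields the advertised estimate
\begin{equation*}
    D \;\le\; e^{C\kappa^{1-\epsilon}(\lambda_0+1)}\, A^\kappa B^{1-\kappa}.
\end{equation*}

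There is no serious obstacle in this proof; the only substantive point is recognizing that the hypothesis $D \le B$ is precisely what rescues the argument when the balanced $\lambda^\ast$ falls below the admissibility threshold $\lambda_0$. Without it, Case 2 would be irrecoverable. Everything else is routine bookkeeping of constants and exponents.
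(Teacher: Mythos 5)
The paper does not actually prove this lemma (it only cites \cite[Lemma 5.2]{le2012carleman}), so your argument has to be judged against what the stated constant requires. Your overall strategy --- optimize the hypothesis over $\lambda$ and fall back on $D\le B$ when the optimizing $\lambda$ drops below $\lambda_0$ --- is the right one, and your Case~2 is correct as written (indeed $c\kappa\lambda_0\le c\kappa^{1-\epsilon}(\lambda_0+1)$ because $\kappa\le\kappa^{1-\epsilon}$). The gap is in Case~1 and in the final ``absorb the additive constant into the exponential'' step. Balancing the two terms at $\lambda^\ast=\frac1c\log(B/A)$ gives $D\le 2A^\kappa B^{1-\kappa}$, but the factor $2$ cannot be absorbed into $e^{C\kappa^{1-\epsilon}(\lambda_0+1)}$ with $C=C(c,\epsilon)$: that prefactor tends to $1$ as $\kappa\to0^+$ for fixed $\lambda_0$, so no admissible choice of $C$ makes it exceed $2$ uniformly in $\kappa\in(0,1)$. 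This is not bookkeeping; the entire content of the lemma's sharp constant (as opposed to a bound of the form $C'e^{c\kappa\lambda_0}A^\kappa B^{1-\kappa}$) is precisely that it degenerates to the trivial hypothesis $D\le B$ as $\kappa\to 0$, which a uniform factor of $2$ destroys.

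The repair is to use the true minimizer of $\lambda\mapsto e^{c(1-\kappa)\lambda}A+e^{-c\kappa\lambda}B$ rather than the balance point: the minimum is attained at $e^{c\lambda}=\frac{\kappa B}{(1-\kappa)A}$ and equals $\kappa^{-\kappa}(1-\kappa)^{-(1-\kappa)}A^\kappa B^{1-\kappa}$. Its logarithmic prefactor is the entropy $-\kappa\log\kappa-(1-\kappa)\log(1-\kappa)\le\bigl(1+\tfrac{1}{e\epsilon}\bigr)\kappa^{1-\epsilon}$, using $\sup_{t\in(0,1)}t^{\epsilon}\log(1/t)=\tfrac{1}{e\epsilon}$ and $-(1-\kappa)\log(1-\kappa)\le\kappa$; hence it is bounded by $e^{C(\epsilon)\kappa^{1-\epsilon}(\lambda_0+1)}$ as required. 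When this minimizer lies below $\lambda_0$, i.e.\ $\kappa B<(1-\kappa)Ae^{c\lambda_0}$, your fallback $D\le B=(B/A)^{\kappa}A^{\kappa}B^{1-\kappa}$ yields the extra factor $\bigl(\tfrac{1-\kappa}{\kappa}\bigr)^{\kappa}e^{c\kappa\lambda_0}$, which is absorbed the same way. In the applications in this paper $\kappa$ is a fixed geometric constant, so the defect would be invisible there, but as a proof of the lemma as stated your argument does not close without this refinement.
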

\begin{proof}
See \cite[Lemma 5.2]{le2012carleman}.
\end{proof}

\begin{proof}[Proof of \thref{tbi}]
The proof is similar to \cite[Corollary 1]{burman2020stabilized}. Due to the density of $C^2(\Omega)$ in $H^2(\Omega)$, it suffices to show the inequality holds for $u\in C^2(\Omega)$. Let $r_0<r_1<r_2<r_3<r_4$ and $B_j\subset \Omega$, $j=0,1,2,3,4$. Choose $\rho(x)=-d(x,x_0)$ and $K=\Bar{B_4}\backslash B_0$. Notice that $\rho$ has no critical point in $K$. Let $\chi\in C_0^\infty(K)$ satisfies $\chi =1$ in $B_3\backslash B_1$, and $w=\chi u$. Then according to Theorem \thref{car} we have
    \begin{equation*}
        \begin{aligned}
             \int_{B_4\backslash B_0}(\tau^3|w|^2+\tau|\nabla w|^2)e^{2\tau\phi}dx &\le C\int_{B_4\backslash B_0}|\Delta w|^2e^{2\tau\phi}dx\\
             &\le 2C\int_{B_4\backslash B_0}|-\Delta w+Pw|^2e^{2\tau\phi}dx\\
             &+2C\int_{B_4\backslash B_0}|Pw|^2e^{2\tau\phi}dx.
        \end{aligned}
    \end{equation*}
    Set $\tau_0=(4C||P||^2_{L^\infty(\Omega)})^{\frac{1}{3}}+1$, we obtain
    \begin{equation}\label{co1}
        \int_{B_4\backslash B_0}(\frac{\tau^3}{2}|w|^2+\tau|\nabla w|^2)e^{2\tau\phi}dx \le 2C\int_{B_4\backslash B_0}|-\Delta w+Pw|^2e^{2\tau\phi}dx.
    \end{equation}
    Denote $\Phi(r)=e^{-\alpha r}$, and then we can bound the left-hand side in (\ref{co1}) by
    \begin{equation*}
        \int_{B_2\backslash B_1}(\tau^3|w|^2+\tau|\nabla w|^2)e^{2\tau\phi}dx \ge \tau e^{2\tau\Phi(r_2)}||u||^2_{H^1(B_2)}-\tau e^{2\tau}||u||^2_{H^1(B_1)}
    \end{equation*}
    since $\phi\le 1$ and $\tau_0>1$.
    Moreover, the right-hand side can be bounded by
    \begin{equation*}
    \begin{aligned}
         \int_{B_4\backslash B_0}|-\Delta u+Pu|^2e^{2\tau\phi}dx+\int_{(B_4\backslash B_3)\cup B_1}|[\Delta,\chi]u|^2e^{2\tau\phi}dx\\
         \le e^{2\tau}||-\Delta u+Pu||^2_{L^2(\Omega)}+e^{2\tau\Phi(r_3)}||u||^2_{H^1(\Omega)}+e^{2\tau}||u||^2_{H^1(B_1)}.
    \end{aligned}
    \end{equation*}
    Combining the two bounds we obtain
    \begin{equation}\label{es}
        \tau e^{2\tau\Phi(r_2)}||u||^2_{H^1(B_2)}\le C\tau e^{2\tau}(||u||^2_{H^1(B_1)}+||-\Delta u+Pu||^2_{L^2(\Omega)})+Ce^{2\tau\Phi(r_3)}||u||^2_{H^1(\Omega)}.
    \end{equation}
    Then by setting $\tau_0 = C(1+||P||^{\frac{2}{3}}_{L^\infty(\Omega)})$ for some large $C$ to absorb the constant before $||u||_{H^1(B_2)}$, we have
    \begin{equation*}
        ||u||_{H^1(B_2)}\le Ce^{\tau(1-\Phi(r_2))}(||u||_{H^1(B_1)}+||-\Delta u+ Pu||_{L^2(\Omega)})+Ce^{\tau(\Phi(r_3)-\Phi(r_2))}||u||_{H^1(\Omega)}.
    \end{equation*}
    The conclusion follows from \thref{log} by letting 
    \begin{equation*}
        c=1-\Phi(r_3), \;\; \kappa=\frac{\Phi(r_2)-\Phi(r_3)}{1-\Phi(r_3)}\in(0,1).
    \end{equation*}
\end{proof}

\section{Proof of \thref{co2} and \thref{ultraweak}}
\label{sec:proofco2}
To prove \thref{co2}, we need the following Lemma
\begin{lemma}\thlabel{aux}
    Let $\Omega$ be any polygonal/polyhedral domain. Then the auxiliary Schrödinger equation with complex Robin boundary is well-posed, i.e, the system
    \begin{equation}\label{auxiliary}
        \left\{\begin{array}{rcc}
        -\Delta u+Pu & =f & \texttt{in $\Omega$,}   \\
        \partial_n u+iu & =0 & \texttt{on $\partial \Omega$,}
    \end{array}\right.
    \end{equation}
    has a unique solution for $f\in (H^1(\Omega))'$, and
    $$||u||_{H^1(\Omega)}\le C(P)||f||_{(H^1(\Omega))'}.$$
    Moreover, if $P$ is positive in $\Omega$, then the constants $C(P)$ are independent of $P$.
\end{lemma}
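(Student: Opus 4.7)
The plan is to cast \eqref{auxiliary} as a sesquilinear problem on $H^1(\Omega;\mathbb{C})$. Testing against $v \in H^1(\Omega)$ and using the Robin condition $\partial_n u = -iu$ to eliminate the boundary term from integration by parts yields the weak formulation $a(u,v) = \langle f,v\rangle$ for all $v \in H^1(\Omega)$, with
$$a(u,v) := \int_\Omega \nabla u \cdot \overline{\nabla v}\,dx + \int_\Omega P u \overline{v}\,dx + i\int_{\partial\Omega} u \overline{v}\,dS.$$
Continuity of $a$ on $H^1(\Omega)$ is immediate from $P \in L^\infty(\Omega)$ and the trace theorem, so the entire argument reduces to a Lax-Milgram / Fredholm alternative analysis of $a$.

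\textbf{Coercive case $P \ge 0$.} Setting $v = u$ splits into
$$\mathrm{Re}\, a(u,u) = \|\nabla u\|_{L^2(\Omega)}^2 + \int_\Omega P|u|^2 \ge \|\nabla u\|_{L^2(\Omega)}^2, \qquad \mathrm{Im}\, a(u,u) = \|u\|_{L^2(\partial\Omega)}^2.$$
A standard Poincaré-trace inequality on the Lipschitz domain $\Omega$ gives $\|u\|_{H^1(\Omega)}^2 \lesssim \|\nabla u\|_{L^2(\Omega)}^2 + \|u\|_{L^2(\partial\Omega)}^2$, so $|a(u,u)| \ge \tfrac12 \bigl(\mathrm{Re}\,a(u,u) + \mathrm{Im}\,a(u,u)\bigr) \gtrsim \|u\|_{H^1(\Omega)}^2$ with a constant depending only on $\Omega$. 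Complex Lax-Milgram then delivers the conclusion with a $P$-independent constant.

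\textbf{General $P \in L^\infty$ via Fredholm alternative.} Pick $\lambda > \|P\|_{L^\infty(\Omega)}$; the shifted form $\tilde a(u,v) := a(u,v) + \lambda(u,v)_{L^2(\Omega)}$ falls under the coercive case (with effective potential $P + \lambda > 0$) and induces an isomorphism $\tilde A : H^1(\Omega) \to (H^1(\Omega))'$. The operator $A$ associated to $a$ satisfies $A = \tilde A - \lambda K$, where $K : H^1(\Omega) \to (H^1(\Omega))'$ factors through the compact Rellich embedding $H^1 \hookrightarrow L^2 \hookrightarrow (H^1)'$ and is therefore compact. So $A$ is Fredholm of index zero, and the desired bound follows from the open mapping theorem once injectivity is established.

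\textbf{Injectivity via unique continuation (the main obstacle).} Suppose $Au = 0$. Testing with $v = u$ and taking imaginary parts yields $\|u\|_{L^2(\partial\Omega)}^2 = 0$, so $u|_{\partial\Omega} = 0$, and the Robin condition then forces $\partial_n u|_{\partial\Omega} = 0$ as well. Extend $u$ by zero to a ball $B \supset\supset \Omega$; because both Cauchy data vanish, a direct integration by parts shows that the extension $\tilde u \in H^1(B)$ is a weak solution of $-\Delta \tilde u + \tilde P \tilde u = 0$ in $B$, where $\tilde P$ is the extension of $P$ by $0$. Since $\tilde u \equiv 0$ on the non-empty open set $B \setminus \overline{\Omega}$, the unique continuation statement \thref{uniqueness} (itself built from the Carleman estimate \thref{car}) forces $\tilde u \equiv 0$ in $B$, hence $u = 0$. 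The delicate step is precisely this last one: justifying that the zero extension remains a weak solution through the polygonal boundary, and then invoking the Carleman-based unique continuation up to the boundary to close the Fredholm argument.
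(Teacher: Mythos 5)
Your proof is correct, and it reaches the same two pillars as the paper's argument (testing with $v=u$ to kill the Cauchy data, and unique continuation to get injectivity; the $P\ge 0$ bound via real/imaginary parts plus a Poincar\'e--trace inequality is identical), but the surrounding machinery is genuinely different in two places. First, for existence: the paper follows Melenk and asserts that uniqueness of the primal and the dual weak problems yields existence, which implicitly requires the operator to have closed range; you instead make this explicit by writing $A=\tilde A-\lambda K$ with $\tilde A$ coercive (Lax--Milgram) and $K$ compact by Rellich, so that $A$ is Fredholm of index zero and injectivity alone closes the argument. This is a cleaner and more self-contained justification of the step the paper leaves terse. Second, for the unique continuation step: the paper quotes an external result on unique continuation from Cauchy data on the boundary (Alessandrini et al.), whereas you extend $u$ by zero across $\partial\Omega$ --- legitimate because $u|_{\partial\Omega}=0$ puts $u$ in $H^1_0(\Omega)$ and the weak formulation tested against all of $H^1(\Omega)$ encodes the vanishing of the conormal derivative, so the extension is a weak solution on a larger ball --- and then invoke only \emph{interior} unique continuation, which is available in the paper via \thref{tbi} and \thref{uniqueness} (note the extension is $H^2_{\mathrm{loc}}$ by interior elliptic regularity, so the $H^2$ hypothesis of \thref{tbi} is met, and there is no circularity since \thref{tbi} does not depend on \thref{aux}). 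Your closing remark about ``unique continuation up to the boundary'' is a slight mislabel: after the zero extension no boundary UCP is needed, which is precisely the advantage of your route over the paper's citation.
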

\begin{proof}[proof of \thref{aux}]
    The idea follows \cite[Section 8.1]{melenk1995generalized}. To show existence and uniqueness of the problem, it suffices to show that if the following primal and dual weak formulation has a solution then it is unique. That is, there is a unique $u\in H^1(\Omega)$, such that for all $v\in H^1(\Omega)$, 
    \begin{equation*}
        \int_\Omega \nabla u\cdot \nabla \bar{v}dx+\int_\Omega Pu\bar{v}dx \pm i\int_{\partial\Omega}u\bar{v}dS=\langle f, \bar{v}\rangle.
    \end{equation*}
    To show this, consider the difference $w$ between two solutions, which satisfies
    \begin{equation*}
        \int_\Omega \nabla w\cdot \nabla \bar{v}dx+\int_\Omega Pw\bar{v}dx \pm i\int_{\partial\Omega}w\bar{v}dS=0.
    \end{equation*}
    Let $v=w$. The real and imaginary part should be $0$, we deduce that $w=0$ on the boundary, and so is its outer normal derivative. Then by the unique continuation property with Cauchy data for elliptic operators \cite{alessandrini2009stability}, we conclude $w=0$ in $\Omega$. This shows the uniqueness of  both problems, and thus the existence, and the inverse bound follows by \textbf{bounded inverse theorem}, see, for example in \cite[Corollary 2.7]{brezis}. In the specific case when $P\ge 0$, we choose $v = u$ in the weak formulation and take real and imaginary part to get the estimates
    \begin{equation*}
        ||\nabla u||_\Omega \le C||f||_{(H^1(\Omega))'},
    \end{equation*}
    and
    \begin{equation*}
        ||u||_{\partial\Omega} \le C||f||_{(H^1(\Omega))'}
    \end{equation*}
    Combining the two equations and by \textbf{Poincare's inequality} we have
    \begin{equation*}
        ||u||_{H^1(\Omega)}\le C||f||_{(H^1(\Omega))'}.
    \end{equation*}
\end{proof}

\begin{proof}[Proof of \thref{co2}]
    We only need to prove the inequality for $u\in H^2(\Omega)$ and use a density argument for $u\in H^1(\Omega)$. Let $B_0\subset B_1\subset B_2\subset B_3\subset B_4\subset \Omega$, $\psi\in C_0^\infty(B_4)$ be a cut-off function in $B_4$ and equals $1$ in $B_3$, consider the following equation,
    \begin{equation}\label{dir}
       \left\{\begin{array}{rlc}
        -\Delta w+Pw & =\psi(-\Delta u+Pu) & \texttt{in $B_4$,}   \\
        \partial w+iw & =0 & \texttt{on $\partial B_4$.}
        \end{array}\right.
    \end{equation}
    According to \thref{aux}, (\ref{dir}) has a unique solution and thus satisfies the inverse bound:
    \begin{equation}\label{invbnd}
    \begin{aligned}
      ||w||_{H^1(B_3)}&\le C(P)||\psi(-\Delta u+Pu)||_{(H^1(B_4))'}\\
      &=C(P)\sup_{v\in H^1(B_4)} <-\Delta u+Pu,\psi \frac{v}{||v||_{H^1(B_4)}}>\\ 
      &\le C(P)||-\Delta u+Pu||_{H^{-1}(B_4)} ,
    \end{aligned}
    \end{equation}
    where $<\cdot,\cdot>$ represents the $H^{-1}-H^1$ duality. Note that $C(P)$ is independent of $P$ if $P\ge 0$, in which case the auxiliary problem (\ref{dir}) has an exact inverse bound independent of $P$.
    Consider the difference $v=u-w$.  $-\Delta v+Pv$ vanishes in $B_3$. According to \thref{tbi}, we have 
    \begin{equation}
        ||v||_{H^1(B_2)}\le C(P)||v||_{H^1(B_0)}^\kappa ||v||_{H^1(B_3)}^{1-\kappa}.
    \end{equation}
 $u$ can then be bounded by 
    \begin{equation*}
        \begin{aligned}
            ||u||_{H^1(B_2)} &\le ||v||_{H^1(B_2)}+||w||_{H^1(B_2)}\\
            & \le C(P)(||u||_{H^1(B_0)}+||w||_{H^1(B_0)})^\kappa (||u||_{H^1(B_3)}+||w||_{H^1(B_3)})^{1-\kappa}\\
            &+C(P)||-\Delta u+Pu||_{H^{-1}(\Omega)}\\
            & \le C(P)(||u||_{H^1(B_0)}+||-\Delta u+Pu||_{H^{-1}(\Omega)})^\kappa (||u||_{H^1(B_3)}+||-\Delta u+Pu||_{H^{-1}(\Omega)})^{1-\kappa}.
        \end{aligned}
    \end{equation*}
    To bound $||u||_{H^1(B_0)}$ using the $L^2(B_1)$ norm for the data, let $\chi\in C_0^\infty(B_1)$ satisfy $\chi=1$ in $B_0$, and consider
    \begin{equation}
        \left\{\begin{array}{rlc}
        -\Delta \chi u+P\chi u & =\chi(-\Delta u+Pu)+[-\Delta,\chi]u & \texttt{in $B_1$}   \\
         \partial_n\chi u+i\chi u & =0 & \texttt{on $\partial B_1$}
        \end{array}\right.,
    \end{equation}
    where $[-\Delta,\chi] := -\Delta\chi+\chi\Delta$. We have by the same arguments as for (\ref{invbnd})
    \begin{equation}\label{eq:H1commutebounf}
    ||u||_{H^1(B_0)}\le ||\chi u||_{H^1(B_1)} \le C(P)(||\chi(-\Delta u+Pu)||_{(H^1(B_1))'}+||[-\Delta,\chi]u||_{(H^1(B_1))'}).
    \end{equation}
    Noticing that $\chi$ makes boundary terms vanish in integration by part, i.e. formally,
    \begin{equation}
        [-\Delta, \chi]u = -u\Delta\chi-2\nabla u\cdot\nabla\chi
    \end{equation}
    we have for all $w\in H^1(B_1)$, 
    \begin{equation}\label{comm}
    \begin{aligned}
        &\int_{B_1}uw\Delta \chi dx+2\int_{B_1}w\nabla u\cdot\nabla \chi dx \\
        =&\int_{B_1}uw\Delta \chi dx+2\int_{\partial B_1}wu\nabla \chi\cdot ndS-2\int_{B_1}(uw\Delta \chi+u\nabla\chi\cdot\nabla w)dx\\
        =&-\int_{B_1}uw\Delta \chi dx-2\int_{B_1}u\nabla\chi\nabla wdx\\
        \le &C(\chi)||u||_{B_1}||w||_{H^1(B_1)}.
    \end{aligned}
    \end{equation}
    Applying the estimate of (\ref{comm}) in (\ref{eq:H1commutebounf}), we have
    \begin{equation*}
        ||u||_{H^1(B_0)}\le C(P)(||-\Delta u+Pu||_{H^{-1}(B_1)}+||u||_{L^2(B_1)}).
    \end{equation*}
    Doing the same for $u$ with $B_2$ and $B_3$ we obtain the stated result.\\
\end{proof}
Now we give the proof of \thref{ultraweak}. The proof is adapted from \cite[Theorem 2.2]{monsuur2024ultra} for Poisson's equation. We give a modified version here for the reader's convenience.
\begin{proof}[Proof of \thref{ultraweak}]

Let $\mathcal{H}(\Omega)\subset L^2(\Omega)$ be the range of $-\Delta+P$ defined on $H^2_0(\Omega)$. We claim that $\mathcal{H}(\Omega)$ is a closed subspace of $L^2(\Omega)$. This can be justified by showing the following estimate: $\forall u\in H^2_0(\Omega)$,
\begin{equation*}
    ||u||_{H^2(\Omega)} \le C(P) ||(-\Delta+P)u||_\Omega.
\end{equation*}
Note that in \thref{ultraweak}, we do not assume our domain is convex Polygonal or smooth. Thus we need to extend an outer ball such that the elliptic regularity holds. Let $Z$ be a ball such that $\Omega\subset Z$, we could extend $u$ and $P$ in $Z$ by $0$. We denote the extended functions still $u$ and $P$. Interior elliptic regularity implies:
\begin{equation*}
\begin{aligned}
    ||u||_{H^2(\Omega)}& \lesssim ||(-\Delta+P)u||_Z + ||u||_Z \\
    &= ||(-\Delta+P)u||_\Omega + ||u||_\Omega.
    \end{aligned}
\end{equation*}
The Schrödinger operator on $H^2_0(\Omega)$ is injective by the unique continuation property. Moreover, the inclusion $H^2(\Omega)\subset L^2(\Omega)$ is compact. The Petree–Tartar lemma, see e.g. \cite[Lemma A.20]{ern2021finite}, implies that
\begin{equation*}
    ||u||_{H^2(\Omega)} \lesssim ||(-\Delta+P)u||_\Omega,
\end{equation*}
which justifies our claim.

Let $w\in L^2(\Omega)$, and let $\hat{w}\in \mathcal{H}(\Omega)$ denote its orthogonal projection on $\mathcal{H}(\Omega)$. Then we have $\forall v\in H^2_0(\Omega)$,
\begin{equation*}
    \int_\Omega w (-\Delta+P)v = \int_\Omega \hat{w}(-\Delta+P)v.
\end{equation*}

Letting $-\Delta\phi +P\phi = \hat{w}$, we have the bound
\begin{equation*}
    ||\hat{w}||^2_\Omega \le ||(-\Delta+P)w||_{H^{-2}(\Omega)}||\phi||_{H^2(\Omega)}\le C(P)||(-\Delta+P)w||_{H^{-2}(\Omega)}||\hat{w}||_\Omega,
\end{equation*}
so that $||\hat{w}||_\Omega \le C(P)||(-\Delta+P)w||_{H^{-2}(\Omega)}$. Now take $\tilde w=w-\hat{w}$, then $\forall v\in H^2_0(\Omega)$
\begin{equation*}
    \int_\Omega\tilde w \Delta vdx = \int_\Omega P\tilde{w}vdx.
\end{equation*}
By elliptic regularity for the Laplace operator in the distribution space (See, e.g. \cite[Theorem 9.26]{folland1999real}), since $P\tilde w\in L^2(\Omega)$, we have that $\tilde w\in H^2_{loc}(\Omega)$. Moreover, $-\Delta \tilde w+P\tilde w=0$.

Let $B_1\subset B_2\subset B_3\subset \Omega$, we have by \thref{co2}
\begin{equation*}
    ||\tilde w||_{B_2} \le ||\tilde w||_{B_1}^\kappa||\tilde w||_{B_3}^{1-\kappa}.
\end{equation*}
To conclude we consider the $L^2(B_2)$-norm of $w$ and use that $w = \tilde w + \hat w$ and the above bounds in the following manner
\begin{equation*}
\begin{aligned}
    ||w||_{B_2} &\le ||\hat{w}||_{\Omega} + ||\tilde w||_{B_2}\\
    &\le ||(-\Delta+P)w||_{H^{-2}(\Omega)} + ||\tilde w||_{B_1}^\kappa||\tilde w||_{B_3}^{1-\kappa}\\
    &\le ||(-\Delta+P)w||_{H^{-2}(\Omega)} + (||w||_{B_1}+||\hat{w}||_{B_1})^\kappa (||w||_{B_3}+||\hat{w}||_{B_3})^{1-\kappa}\\
    &\le (||w||_{B_1}+||(-\Delta+P)w||_{H^{-2}(\Omega)})^\kappa ((||w||_{\Omega}+||(-\Delta+P)w||_{H^{-2}(\Omega)})^{1-\kappa}.
    \end{aligned}
\end{equation*}
\end{proof}
\section{Proof of \thref{L2error}}
\label{sec:proofL2error}
Before proving the following estimate, we first state two inequalities for semiclassical Soblev norm, see, for example, \cites{dos2009limiting,zworski2022semiclassical}. Let
    $$J^s = (1-\hbar^2\Delta)^\frac{s}{2}$$ 
    be the Bessel potential, we will use two estimates with semiclassical Sobolev norm. Let $\theta$, $\eta\in C_0^\infty(\mathbb{R}^n)$, where $\eta=1$ near supp($\theta$), and $A,B$ be two semiclassical pseudodifferential operators of orders $s$, $m$, respectively. Then for all $p,q,N\in \mathbb{R}$, there exists $C$, such that 
    \begin{equation}\label{eq1}
        ||[A,B]v||_{H^p_{scl}}\le C\hbar||v||_{H^{p+s+m-1}_{scl}},
    \end{equation}
    \begin{equation}\label{eq2}
        ||(1-\eta)A\theta v||_{H^p_{scl}}\le C\hbar^N||v||_{H^q_{scl}}.
    \end{equation}

To prove \thref{L2error}, we need the following lemma:
\begin{lemma}\thlabel{sft}
    Let $v\in C_0^\infty(B(x_0,R)\backslash B(x_0,r))$ for some $R,r$, $\hbar=\frac{1}{\tau}$,  $F:=-\hbar^2e^\frac{\phi}{\hbar}\Delta e^\frac{\phi}{\hbar}$ be a semiclassical pseudodifferential operator, where $\tau$ and $\phi$ is defined as in \thref{car}, and $P$ be the bounded potential. Then, there exists a constant $C$, such that
    \begin{equation*}
        \sqrt{\hbar}||v||_{L^2}\le C||(F+\hbar^2P)v||_{H^{-1}_{scl}},
    \end{equation*}
    where $||\cdot||_{H^{-1}_{scl}}$ denotes semiclassical Sobolev norm, defined by 
    \begin{equation*}
        ||u||_{H^{-1}_{scl}} :=||(1-\hbar^2\Delta)^\frac{s}{2}u||_{L^2(\mathbb{R}^n)}
    \end{equation*}
\end{lemma}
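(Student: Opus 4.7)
The plan is to first recast the Carleman inequality of \thref{car} into a semiclassical bound expressed in terms of $F$, and then to upgrade the right-hand side $L^{2}$ norm to the $H^{-1}_{scl}$ norm by conjugating with the Bessel potential $J=(1-\hbar^{2}\Delta)^{1/2}$. The nonlocality of $J^{-1}$ will be absorbed by inserting smooth cut-offs and invoking the pseudolocal and commutator bounds (\ref{eq1})--(\ref{eq2}).

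First I would substitute $v=e^{\phi/\hbar}w$ in the inequality of \thref{car}. With the intended conjugation $F=-\hbar^{2}e^{\phi/\hbar}\Delta e^{-\phi/\hbar}$ and the rescaling $\hbar=1/\tau$, the weighted $L^{2}$ norms of $w$, $\nabla w$ and $\Delta w$ turn into unweighted norms of $v$, $\hbar\nabla v-\nabla\phi\,v$ and $Fv$. Absorbing the lower-order term $\sqrt{\hbar}\,\|\nabla\phi\,v\|_{L^{2}}$ (using $|\nabla\phi|\le C$ on $K$) and the potential contribution $\hbar^{2}\|P\|_{L^{\infty}}\|v\|_{L^{2}}$ (valid for $\hbar\le\hbar_{0}$ small) yields the intermediate estimate
\begin{equation*}
\sqrt{\hbar}\,\|v\|_{H^{1}_{scl}}\le C\,\|(F+\hbar^{2}P)v\|_{L^{2}},\qquad v\in C_{0}^{\infty}(K).
\end{equation*}

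Next, I would apply this bound to $u=\eta J^{-1}\theta v$, where $\theta,\eta\in C_{0}^{\infty}$ are cut-offs with $\theta\equiv 1$ on $\mathrm{supp}\,v$ and $\eta\equiv 1$ on $\mathrm{supp}\,\theta$, so that $u$ remains compactly supported in a neighbourhood of $K$. The pseudolocal estimate (\ref{eq2}) gives $\|(1-\eta)J^{-1}\theta v\|_{H^{1}_{scl}}=O(\hbar^{N})\|v\|_{L^{2}}$, so $\|u\|_{H^{1}_{scl}}=\|v\|_{L^{2}}+O(\hbar^{N})$. Commuting $F+\hbar^{2}P$ past $\eta$, $J^{-1}$ and $\theta$, the bound (\ref{eq1}) shows that every commutator gains one factor of $\hbar$; the only exception is $[J^{-1},P]$, which is not pseudodifferential since $P\in L^{\infty}$ only, but is still controlled by $\|J^{-1}\|_{L^{2}\to L^{2}}\|P\|_{L^{\infty}}$, giving $\|[J^{-1},\hbar^{2}P]v\|_{L^{2}}\le C\hbar^{2}\|P\|_{L^{\infty}}\|v\|_{L^{2}}$. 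Hence
\begin{equation*}
\|(F+\hbar^{2}P)u\|_{L^{2}}\le C\,\|J^{-1}(F+\hbar^{2}P)v\|_{L^{2}}+C\hbar\,\|v\|_{L^{2}}=C\,\|(F+\hbar^{2}P)v\|_{H^{-1}_{scl}}+C\hbar\,\|v\|_{L^{2}}.
\end{equation*}
Combining the two steps produces
\begin{equation*}
\sqrt{\hbar}\,\|v\|_{L^{2}}\le C\,\|(F+\hbar^{2}P)v\|_{H^{-1}_{scl}}+C\hbar\,\|v\|_{L^{2}},
\end{equation*}
and the lemma follows by absorbing the last term for $\hbar\le\hbar_{0}$ sufficiently small.

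The main obstacle is the incompatibility of the nonlocal operator $J^{-1}$ with the compact-support hypothesis of \thref{car}: the function $J^{-1}v$ is not in $C_{0}^{\infty}(K)$, so the Carleman estimate cannot be applied to it directly. The pseudolocal estimate (\ref{eq2}) and the commutator gain (\ref{eq1}) are precisely the technical tools needed to replace the nonlocal conjugation by a localised one modulo $\hbar$-small remainders, turning the gain-of-a-derivative trick into a rigorous argument; the only non-microlocal piece is $[J^{-1},P]$, which must be handled by hand but is already small enough thanks to the $\hbar^{2}$ prefactor.
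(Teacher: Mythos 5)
Your proposal is correct and follows essentially the same route as the paper: both derive the intermediate semiclassical bound $\sqrt{\hbar}\,\|v\|_{H^1_{scl}}\le C\|(F+\hbar^2P)v\|_{L^2}$ from the Carleman estimate, then apply it to the localised conjugate $\eta J^{-1}v$, using the pseudolocal bound (\ref{eq2}) to discard $(1-\eta)J^{-1}\theta v$ and the commutator bound (\ref{eq1}) (plus a hands-on $L^\infty$ bound for the $P$-commutator) to pass to the $H^{-1}_{scl}$ norm, absorbing the $O(\hbar)\|v\|_{L^2}$ remainder for small $\hbar$. The only cosmetic difference is that you measure the $P$-commutator in $L^2$ while the paper uses $H^{-1}_{scl}$; both are absorbed the same way.
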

\begin{proof}[Proof of \thref{sft}]
    A similar proof can be referred to \cite{burman2020stabilized} for the convection-diffusion equation, but we will give it for the readers' convenience. 
    Since $v\in C_0^\infty(B(x_0,R)\backslash B(x_0,r))$, we construct the same way as in \thref{tbi}, and by Carleman estimate and some simple algebra, we have
    \begin{equation*}
        C\int_{\mathbb{R}^n}|e^\frac{\phi}{\hbar}\Delta e^{-\frac{\phi}{\hbar}}v|^2dx\ge        \int_{\mathbb{R}^n}(\hbar^{-1}-1)|\nabla v|^2+(\hbar^{-3}-\hbar^{-2})v^2dx.
    \end{equation*}
    Adding potential term on both sides and rescaling by $\hbar^4$ with $\hbar <<1$ we obtain 
    \begin{equation*}
        C\int_{\mathbb{R}^n}|(F+\hbar^2P)v|^2dx\ge \int_{\mathbb{R}^n}\hbar^3|\nabla v|^2+(\hbar-C\hbar^4P^2)v^2dx.
    \end{equation*}
    Note that $||v||_{H^1_{scl}}=\int_{\mathbb{R}^n}\hbar^2|\nabla v|^2+v^2dx$. By letting 
    \begin{equation}\label{con1}
         \hbar^3<<\frac{1}{C||P||^2_\infty}
    \end{equation}
   
    the right-hand term with potential can be absorbed and thus we have
    \begin{equation}\label{l2es}
        \sqrt{\hbar}||v||_{H^1_{scl}}\le C||(F+\hbar^2P)v||_{L^2}
    \end{equation}
    Now let $\theta,\eta\in C_0^\infty(\mathbb{R}^n)$ satisfying that $\theta=1$ around support of $v$ and $\eta=1$ near support of $\theta$. Then by (\ref{eq2}), for $\hbar<\hbar_0$ for $\hbar_0$ depending only on geometry, we have,
    \begin{equation*}
        \begin{aligned}
            ||v||_{L^2}&=||J^{-1}v||_{H^1_{scl}}\\
            &\le ||\eta J^{-1}v||_{H^1_{scl}}+||(1-\eta) J^{-1}\theta v||_{H^1_{scl}}\\
            &\le C||\eta J^{-1}v||_{H^1_{scl}}
        \end{aligned}
    \end{equation*}
    Note that $\eta J^{-1}v$ is also compactly supported. By (\ref{l2es}) we have
    \begin{equation*}
        \begin{aligned}
            \sqrt{\hbar}||v||_{L^2}\le C\sqrt{\hbar}||\eta J^{-1}v||_{H^1_{scl}}\le C||(F+\hbar^2P)\eta J^{-1}v||_{L^2}
        \end{aligned}
    \end{equation*}
    Now commute $(F+\hbar^2P)$ and $\eta J^{-1}$, we have the estimate
    \begin{equation*}
    \begin{aligned}
        ||[(F+\hbar^2P), \eta J^{-1}]v||_{L^2}&=||[F,\eta J^{-1}]v||_{L^2}+||[\hbar^2P,\eta J^{-1}]v||_{L^2}\\
        &\le C\hbar||v||_{L^2}+C\hbar^2||P||_{L^\infty}||v||_{H^{-1}_{scl}},
        \end{aligned}
    \end{equation*}
    where the first estimate is from (\ref{eq2}) and the second estimate is simply from the definition of $||\cdot||_{H^{-1}_{scl}}$. Note that the constant $C$ above onle depends on geometry, we can make $\hbar_0$ small enough to absorb the first term, and by (\ref{con1}) the second term can also be absorbed. Thus
    \begin{equation}\label{sftes}
        \sqrt{\hbar}||v||_{L^2}\le C||J^{-1}(F+\hbar^2 P)v||_{L^2}=C||(F+\hbar^2 P)v||_{H^{-1}_{scl}}.
    \end{equation}

\end{proof}
\begin{proof}[Proof of \thref{L2error}]
    Let $r_0<r_{1,}<r_1$, $\chi\in C^\infty_0(B_4\backslash B_0)$ and $\chi=1$ in $B_3\backslash B_{1'}$, $\psi\in C^\infty_0(B_1\cup\Omega\backslash B_2)$ and $\psi=1$ in $(B_4\backslash B_3)\cup (B_{1'}\backslash B_0)$, then we have $\psi=1$ while $[F,\chi]\neq 0$. Let $u\in C^\infty(\Omega)$, and we obtain by (\ref{eq2})
    \begin{equation*}
        ||[F+\hbar^2P,\chi]e^\frac{\phi}{\hbar}u||_{H^{-1}_{scl}}\le C||[F,\chi]\psi e^\frac{\phi}{\hbar}u||_{H^{-1}_{scl}}\le C\hbar ||\psi e^\frac{\phi}{\hbar}u||_{L^2}.
    \end{equation*}
    Notice that $\chi e^\frac{\phi}{\hbar}u\in C_0^\infty (B_4)$, which satisfies the condition in \thref{sft}. Thus combining (\ref{eq2}) and (\ref{sftes}) we obtain
    \begin{equation*}\begin{aligned}
        \sqrt{\hbar}||\chi e^\frac{\phi}{\hbar}u||_{L^2}&\le C||(F+\hbar^2P)\chi e^\frac{\phi}{\hbar}u||_{H^{-1}_{scl}}\\
        &\le C\hbar^2||\chi e^\frac{\phi}{\hbar}(-\Delta+P)u||_{H^{-1}_{scl}}+C\hbar||\psi e^\frac{\phi}{\hbar}u||_{L^2}.
        \end{aligned}
    \end{equation*}
    Now according to the norm bound $||\cdot||_{H^{-1}_{scl}}\le C\hbar^{-2}||\cdot||_{H^{-1}}$ and the support of $\chi$ and $\psi$, a local version of estimate is obtained:
    \begin{equation}\label{eq3}
         \sqrt{\hbar}||e^\frac{\phi}{\hbar}u||_{L^2(B_3)}\le  \sqrt{\hbar}||e^\frac{\phi}{\hbar}u||_{L^2(B_1)}+C|| e^\frac{\phi}{\hbar}(-\Delta+P)u||_{H^{-1}(\Omega)}+C\hbar||\psi e^\frac{\phi}{\hbar}u||_{L^2(\Omega)}.
    \end{equation}
    Notice the last term on the right-hand side can be separated into three components, i.e. $B_1,B_3\backslash B_2, \Omega\backslash B_3$. Notice that the $B_1$ part can be absorbed by $\sqrt{\hbar}||e^\frac{\phi}{\hbar}u||_{L^2(B_1)}$ and the $B_3\backslash B_2$ part can be absorbed by the left-hand side for small $\hbar$. Thus we have by (\ref{eq3}),
    \begin{equation*}
    \begin{aligned}
        ||u||_{L^2(B_2)}\le &Ce^{-\frac{\Phi(r_2)}{\hbar}}\left(||u||_{L^2(B_1)}+\hbar^{-\frac{1}{2}}|| (-\Delta+P)u||_{H^{-1}(\Omega)}\right)\\
        &+C\hbar^{-\frac{3}{2}} e^{-\frac{\Phi(r_2)-\Phi(r_3)}{\hbar}}||u||_{L^2(\Omega)}.
        \end{aligned}
    \end{equation*}
    Then by absorbing the polynomial term by exponential term and following the same procedure as in \thref{tbi}, the conclusion follows.
    
\end{proof}





\bibliographystyle{amsplain}
\bibliography{bibliography.bib}

\end{document}